\newtheorem{theorem}{Theorem}[section]
\newtheorem{lemma}[theorem]{Lemma}
\newtheorem{proposition}[theorem]{Proposition}
\newtheorem{corollary}[theorem]{Corollary}
\newtheorem{remark}[theorem]{Remark}
\newtheorem{problem}[theorem]{Problem}
\newcommand{\M}{{\mathrm D}}
\newcommand{\Imagen}{\mbox{\rm Im }}
\newcommand{\R}{{\mathbb R}}
\newcommand{\F}{{\mathbb F}}
\newcommand{\matriz}[1]{\begin{array} #1 \end{array}}
\newcommand{\GEN}[1]{\left\langle #1 \right\rangle}
\newcommand{\ZZ}{\mathcal{Z}}
\newcommand{\qand}{\quad \text{and} \quad}
\title[On the Modular Isomorphism Problem for groups of nilpotency class $2$ with cyclic center]{On the Modular Isomorphism Problem for groups of nilpotency class $2$ with cyclic center}
\author{ Diego Garc\'{\i}a-Lucas and Leo Margolis}
\keywords{Group rings, Modular Isomorphism Problem, nilpotency class $2$, quadratic forms in characteristic $2$}
\subjclass{16U60, 16S34, 20C05}
\begin{document}
	\maketitle
	
	 \begin{abstract}
	 We show that the modular isomorphism problem has a positive answer for groups of nilpotency class $2$ with cyclic center, i.e. that for such $p$-groups $G$ and $H$ an isomorphism between the group algebras $FG$ and $FH$ implies an isomorphism of the groups $G$ and $H$ for $F$ the field of $p$ elements. For groups of odd order this implication is also proven for $F$ being any field of characteristic $p$. For groups of even order we need either to make an additional assumption on the groups or on the field.
\end{abstract}
	
	\section{Introduction and main result}
	
	Let $p$ be a prime, $\F_p$ the field with $p$ elements, $G$ and $H$ finite $p$-groups and denote by $FG$ the group algebra of $G$ over a field $F$. The modular isomorphism problem asks whether the existence of a ring isomorphism $\F_pG\cong \F_pH$ implies the existence of an isomorphism of groups $G\cong H$. In particular, it appeared in Brauer's surevy from 1963 \cite{Bra63}. 
The modular isomorphism problem received some attention during the following decades, but was solved positively only for special classes of groups, remaining poorly understood in general, as evident from the account given in a beautiful survey of Sandling \cite{Sandling85}. Since then some new techniques were tried, e.g. in \cite{San89}, including computer aided investigations \cite{Wursthorn1993, Eick08}, and more positive results achieved, cf. \cite{Mar22} for an overview. Recently the problem was solved in its generality by the discovery of a series of non-isomorphic $2$-groups which have isomorphic group algebras over any field of characteristic $2$ \cite{GarciaMargolisdelRio}. Nevertheless, the original problem as well as variations of it remain open and interesting. A very obvious class of groups are those of odd order. Another interesting class is that of groups of nilpotency class $2$, about which Sandling in his 1985 survey \cite{Sandling85} writes: 

	\begin{quotation}
		\textit{Nonetheless, it is a sad reflection on the state of the modular isomorphism problem that the case of class 2 groups is yet to be decided in general.}
	\end{quotation}   
	
Note that the examples exhibited in \cite{GarciaMargolisdelRio} have nilpotency class $3$, so that the groups of class $2$ are situated between the abelian groups, for which the first positive result on the modular isomorphism problem was proven \cite{Deskins1956}, and a class known to contain negative solutions. A further interesting aspect of the problem concerns the underlying field: could it happen that $FG \cong FH$ for some field $F$ of characteristic $p$, but $kG \not\cong kH$ for some other field $k$ of the same characteristic? No answer to this question is known (see also \cite{GarciLucasDelRio23}).

We give a quick account of all the known results for groups of nilpotency class $2$: Passi and Sehgal showed that the problem has a positive answer for groups of class $2$ and exponent $p$ or $4$ over the prime field \cite{PS72} and Sandling showed the same for groups of class $2$ with elementary abelian derived subgroup \cite{San89}. Drensky studied the class of groups with center of index $p^2$ for which he solved the problem independently of the underlying field \cite{Drensky}, generalizing a result of Nachev and Mollov \cite{NachevMollov}. Rather recently it was solved, again over the prime field, for $2$-generated groups of class $2$ \cite{BdR20}. Taking into account that the results of \cite{San89} are already mentioned in Sandling's survey, one can say, that the situation did not change much up to now concerning the quote given above.
	
As an initial attempt to solve the problem for the wide class of groups of nilpotency class $2$, we solve the modular isomorphism problem positively for those groups of class $2$ which have cyclic center. Moreover, we pay close attention to the underlying field and exhibit interesting behavior in this regard for the groups of even order in our class. Our results read as follows.

\begin{theorem}\label{TheoremMIPCyclicCentreOdd} 
Let $p$ be an odd prime, $F$ a field of characteristic $p$ and $G$ a finite $p$-group of nilpotency class 2 with cyclic center. If $H$ is a group such that the modular group algebras $FG$ and $FH$ are isomorphic, then the groups $G$ and $H$ are isomorphic. 
\end{theorem}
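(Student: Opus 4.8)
The plan is to recover from the algebra $FG$ a complete set of isomorphism invariants for $G$ and to match them against any group $H$ with $FH\cong FG$. Several pieces come for free from classical theory (see e.g.\ \cite{Sandling85}): $H$ is again a $p$-group with $|H|=|G|$, the quotients of the Jennings (dimension) series of $G$ and $H$ coincide, so in particular the nilpotency class is a determined invariant and $H$ again has class $2$, the abelianisations satisfy $H/H'\cong G/G'$, and $|H'|=|G'|$. Since $G$ has cyclic centre and class $2$, the commutator subgroup $G'\le Z(G)$ is cyclic; the first genuine task is therefore to reconstruct the two pieces of nonabelian data that distinguish such groups from their abelianisation, namely the commutator pairing and the $p$-power map.

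For the commutator pairing I would proceed as follows. For class $2$ the group commutator is bimultiplicative and factors through a well-defined alternating form $\omega\colon G/G'\times G/G'\to G'$ whose radical is exactly $Z(G)/G'$; thus, once $\omega$ is known, the cyclic centre $Z(G)$ is recovered as its radical and $\omega$ descends to a nondegenerate alternating form on $G/Z(G)$ valued in the cyclic group $G'\cong \Z/p^m$. The key point to establish is that $\omega$ is determined by $FG$. Working inside the augmentation ideal $I$, for $g,h\in G$ one has $gh-hg=hg\,([g,h]-1)$, which ties the group commutator to the Lie structure of the associated graded algebra $\bigoplus_n I^n/I^{n+1}$, an invariant of $FG$. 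Here the hypothesis that $G'$ is cyclic is essential: it should let one read off $\omega$ at full resolution, i.e.\ as a form valued in all of $\Z/p^m$ rather than merely modulo $\Phi(G)$, by tracking a single generator of $G'$ through the filtration.

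To reconstruct the $p$-power map I would use that $p$ is odd. In a class $2$ group the Hall--Petrescu identity gives $(gh)^p=g^ph^p[h,g]^{\binom p2}$, and since $p$ is odd $\binom p2=p\cdot\frac{p-1}2\equiv 0\pmod p$, so the correction term lies in $(G')^p$. Consequently the power map $g\mapsto g^p$ is additive modulo $(G')^p$ and defines a homomorphism-type datum $\pi$ rather than a quadratic one; this is precisely the structural simplification that fails for $p=2$, where the term $[h,g]$ survives and forces the quadratic forms of the even case. I would then show that $\pi$ is likewise determined by $FG$. One convenient route to the classification is the Lazard correspondence (valid since the nilpotency class $2$ is less than $p$): it translates $G$ into a nilpotent Lie ring of class $2$ with cyclic centre, whose isomorphism type is pinned down by the alternating bracket form together with the additive structure, via the classification of nondegenerate alternating forms over $\Z/p^m$. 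In this way $(G/G',\,G',\,\omega,\,\pi)$ is a complete invariant for the groups in our class.

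Assembling these, the argument concludes as follows. From $FH\cong FG$ the free invariants force $H$ to have class $2$ with $H/H'\cong G/G'$ and $|H'|=|G'|$; applying the reconstruction of $\omega$ and $\pi$ to both group algebras and using $FH\cong FG$, the corresponding invariants of $H$ and $G$ agree, so in particular $H'$ is cyclic and $\omega_H,\pi_H$ match $\omega_G,\pi_G$, whence the classification yields $H\cong G$. I expect the main obstacle to be the full-resolution recovery of $\omega$ and $\pi$ as a form and a map valued in the entire cyclic group $G'$, together with the identification of the radical of $\omega$ with $Z(G)/G'$, directly from the graded structure of $FG$; by contrast, the purely group-theoretic classification by alternating forms, once these invariants are in hand, should be the comparatively routine ingredient.
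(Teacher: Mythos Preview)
Your proposal identifies a natural strategy but leaves the decisive step as an acknowledged gap. You want to recover the commutator pairing $\omega\colon G/G'\times G/G'\to G'\cong\Z/p^m$ and the $p$-power datum $\pi$ ``at full resolution'' from $FG$, and you say yourself that this is the main obstacle. It is a real one: the Lie bracket on the associated graded $\bigoplus I^n/I^{n+1}$ only records $\omega$ one Jennings layer at a time (e.g.\ $[I/I^2,I/I^2]\subseteq I^2/I^3$ sees commutators modulo $\M_3(G)$), and there is no off-the-shelf mechanism to reassemble these slices into a single $\Z/p^m$-valued form, even when $G'$ is cyclic. The same difficulty applies to $\pi$: the Hall--Petrescu correction being in $(G')^p$ tells you the $p$-th power map is additive modulo $(G')^p$, but to pin down the extension class you need it modulo nothing. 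Your classification claim is also imprecise: the tuple $(G/G',G',\omega,\pi)$ is only a complete invariant once you specify exactly what $\pi$ is (its domain, codomain, and compatibility with $\omega$), and the Lazard/Baer Lie ring has underlying abelian group $G$ with a twisted addition, not $G/G'\times G'$, so you cannot simply read it off from those four pieces without further argument.

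The paper bypasses all of this by taking a concrete, parameter-based route. It invokes Leong's explicit classification of class-$2$ $p$-groups with cyclic centre as central products $Q_1(n_1,r_1)*\cdots*Q_\alpha(n_\alpha,r_\alpha)*Q_{\alpha+1}(\ell_1,\ell_1)*\cdots$, and then shows that each numerical parameter is determined by $FG$. The multiset $\{r_i,\ell_j\}$ comes from $G/\ZZ(G)$, which is determined since $I(\ZZ(G))FG$ is canonical. To separate the $r_i$ from the $\ell_j$ and to recover the $n_i$, the paper computes the orders $|\M_{p^t}(G)\cap\ZZ(G)|$ (shown to be determined because $I(\ZZ(G)^{p^t})FG$ is canonical) and analyses a Frobenius-type map $f_t\colon I(\Omega_t(G:\ZZ(G)))FG/I(G)^2\to I(G)^{p^t}/I(G)^{p^t+1}$, $x\mapsto x^{p^t}$; the key algebraic input is that for class $\le p-1$ the $p$-th power on $I/I^2$ is additive (Lemma~\ref{lem:FrobeniusMap}), which is where the oddness of $p$ enters. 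So instead of reconstructing $\omega$ globally, the paper extracts exactly the scalar invariants needed to match Leong's list. If you want to pursue your approach, the missing ingredient is a canonical construction inside $FG$ that outputs $\omega$ (or an equivalent datum) valued in the full cyclic group $G'$; absent that, the proposal does not yet constitute a proof.
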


\begin{theorem}\label{TheoremMIPCyclicCentreEvenGoodFieds} 
Let $F$ be a field of characteristic $2$ and $G$ a finite $p$-group of nilpotency class 2 with cyclic center. Assume moreover that the polynomial $X^2+X+1$ is irreducible in the polynomial ring $F[X]$. If $H$ is a group such that the modular group algebras $FG$ and $FH$ are isomorphic, then the groups $G$ and $H$ are isomorphic. 
\end{theorem}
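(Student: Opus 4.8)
The plan is to recover from the group algebra enough invariants to pin down the group, and then to translate the classification of class-$2$ groups with cyclic center into the language of quadratic forms in characteristic $2$. As a first step I would argue that an isomorphism $FG\cong FH$ forces $H$ to be a $2$-group of nilpotency class $2$ with cyclic center, with $|G|=|H|$, $|Z(G)|=|Z(H)|$, $|G'|=|H'|$ and $G/G'\cong H/H'$; the nilpotency class, the dimensions read off from the powers of the augmentation ideal, and the isomorphism type of the abelianization are classical modular group algebra invariants, while the cyclicity of the center together with the orders should follow from the framework developed earlier for this class of groups. This reduces the problem to the following: given that $G$ and $H$ already share all of these numerical data, deduce $G\cong H$ from $FG\cong FH$.

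The second step is structural. For a class-$2$ group with cyclic center $Z\cong C_{2^n}$ and $G'\cong C_{2^m}\le Z$, the commutator induces a nondegenerate alternating bilinear form $b\colon G/Z\times G/Z\to G'$, while the squaring map satisfies $(xy)^2=x^2y^2[y,x]$ and therefore behaves, in characteristic $2$, as a quadratic refinement of $b$. I would encode the pair $(b,q)$, after extending scalars to $F$, as a quadratic form over $F$ (together with the discrete data $n$ and $m$), and show that a class-$2$ group with cyclic center is determined up to isomorphism by this form up to isometry; choosing a symplectic basis for $b$ exhibits $G$ as a central product of $2$-generated pieces, so that the $2$-generated case and its analysis via the associated binary quadratic forms becomes the building block. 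The key is then to prove that $FG$ determines the isometry type over $F$ of this quadratic form, mirroring the argument for odd $p$, where the absence of characteristic-$2$ pathologies lets the corresponding form theory go through over an arbitrary field.

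The main obstacle, and the reason for the hypothesis on $F$, is that over $\F_2$ the two isometry classes of a nondegenerate binary quadratic form are represented by the hyperbolic form $xy$ and the anisotropic norm form $x^2+xy+y^2$ of $\F_4$, and these correspond to genuinely different groups (the dihedral- and the quaternion-type pieces); but the anisotropic form becomes isotropic, hence hyperbolic, precisely over fields containing a root of $X^2+X+1$, that is, over fields containing $\F_4$. Thus extending scalars to such a field collapses exactly the distinction that the group algebra must detect, and a naive form-theoretic argument would fail. The assumption that $X^2+X+1$ be irreducible in $F[X]$ is what keeps the norm form of $\F_4$ anisotropic over $F$, so that the isometry type of the quadratic form over $F$ still separates the dihedral-type and quaternion-type contributions and the central-product decomposition reconstructs $G$. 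I expect the technical heart of the argument to lie in showing that this quadratic form, rather than merely its underlying bilinear form or some coarser invariant, is recoverable from $FG$, and in controlling how the squaring data interacts with the larger cyclic center $Z$ (the values of $q$ in $Z$ beyond $G'$), since this is where the even-order case genuinely departs from the odd-order treatment of Theorem \ref{TheoremMIPCyclicCentreOdd}.
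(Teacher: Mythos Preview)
Your proposal correctly identifies the heart of the matter: the squaring map on $G/\ZZ(G)$ is a quadratic refinement of the commutator form, and the hypothesis that $X^2+X+1$ is irreducible over $F$ is precisely what keeps the hyperbolic plane $xy$ and the norm form $x^2+xy+y^2$ in distinct isometry (indeed, similarity) classes. This is exactly the phenomenon the paper exploits via the Arf invariant. Your first step, recovering the numerical invariants and the isomorphism type of $G/\ZZ(G)$, also matches the paper.

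There is, however, a genuine gap in your plan. You write that ``the key is then to prove that $FG$ determines the isometry type over $F$ of this quadratic form,'' but you give no mechanism for extracting the form from the ring $FG$. This is the technical core of the argument, and it is not automatic: the form you describe is a group-theoretic object on $G/\ZZ(G)$, whereas what one can canonically access from $FG$ are quotients of powers of the augmentation ideal. The paper bridges this by constructing the explicit power map
\[
\Lambda\colon I(G)/I(G)^2 \longrightarrow I(G)^{2^t}/I(G)^{2^t+1},\qquad x\mapsto x^{2^t},
\]
with $t=\log_2|G'|$, and computing $\Lambda$ on a Jennings-type basis to see that its image is one-dimensional and that, after undoing a Frobenius twist, $\Lambda$ is the quadratic form $\sum x_iy_i$ for groups of type \eqref{GroupClassification} with $\alpha=0$ and $x_1^2+y_1^2+\sum x_iy_i$ for groups of type \eqref{GroupClassification2b}. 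A second subtlety you do not address is that an isomorphism $FG\cong FH$ only transports $\Lambda^G$ to $\Lambda^H$ up to an invertible linear change of variables on the domain \emph{and a scalar on the one-dimensional target}; one therefore only obtains \emph{similarity} of the forms, not isometry. The paper handles this with a separate lemma showing that similar nondegenerate quadratic forms have the same Arf invariant modulo $\{x^2+x:x\in F\}$, which then forces $X^2+X+1$ to split, contradicting the hypothesis.

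There is also a structural reduction you are skating over. The quadratic-form argument only applies once one has already shown that both groups have $\alpha=0$ in Leong's classification and that their parameter multisets $L$ agree; for groups of type \eqref{GroupClassification} with $\alpha>0$ the recovery of the $n_i$'s and $r_i$'s from $FG$ (via $\M_{p^t}(G)\cap\ZZ(G)$ and the maps $f_t$) is a separate, rather delicate argument occupying the bulk of Section~3. Your sentence ``mirroring the argument for odd $p$'' acknowledges that something is needed here, but the even case requires genuine extra work (e.g.\ the treatment of $r_\alpha$ when $p=2$), and encoding the full parameter list $(n_1,\dots,n_\alpha,r_1,\dots,r_\alpha,\ell_1,\dots,\ell_\beta)$ purely in your form $(b,q)$ together with $n,m$ is not obviously possible.
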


Over arbitrary fields of characteristic $2$ we need a restriction on the groups involved. For $G$ a $2$-group of class $2$, let $m(G)$ be the rank of the homocylic component of $G/Z(G)$ of maximal exponent, i.e. the number of cyclic direct factors of maximal order in this group. This is the same as the rank of the elementary abelian group $ G/ \Omega_{\log_2(\exp(G))-1}(G)$, where $\Omega_n(G)$ denotes the subgroup of $G$ generated by elements of order at most $2^n$.

%

\begin{theorem}\label{TheoremMIPCyclicCentreEvenBadFields} 
Let $F$ be a field of characteristic $2$ and $G$ a finite $2$-group of nilpotency class 2 with cyclic center. If $FG$ and $FH$ are isomorphic and $m(G) \leq 2$, then $G$ and $H$ are isomorphic. If $FG$ and $FH$ are isomorphic and $m(G) > 2$, then there is exactly one isomorphism  type of groups not containing $G$ to which $H$ possibly belongs. 
\end{theorem}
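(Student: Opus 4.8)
Since Theorem~\ref{TheoremMIPCyclicCentreEvenGoodFieds} already disposes of the case where $X^2+X+1$ is irreducible over $F$, the plan is to isolate the complementary situation $\F_4\subseteq F$ and to measure precisely how much of $G$ is lost there. The starting point is the standard encoding of a $2$-group $G$ of class $2$ with cyclic center by the triple consisting of the abelian group $V=G/Z(G)$, the alternating commutator form $\beta(\bar x,\bar y)=[x,y]$, and the squaring map $\bar x\mapsto x^2$, both taking values in the cyclic group $G'\le Z(G)$; the class-$2$ identity $(xy)^2=x^2y^2[y,x]$ exhibits squaring as a quadratic refinement of $\beta$. First I would invoke the machinery behind Theorems~\ref{TheoremMIPCyclicCentreOdd} and~\ref{TheoremMIPCyclicCentreEvenGoodFieds} to recover from $FG$ the isomorphism type of $V$, the order of $G'$, and the form $\beta$ up to equivalence, thereby confining the entire uncertainty about $H$ to the quadratic refinement on the top homocyclic layer.

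The next step is to localize this refinement. I would pass to the $\F_2$-space $\overline V=G/\Omega_{\log_2(\exp(G))-1}(G)$ of dimension $m(G)$, on which $\beta$ induces an alternating form and squaring induces a quadratic form $\mathrm q$ polarizing it. Over $\F_2$ such quadratic forms are classified by the rank of the associated alternating form together with the Arf invariant of the nondegenerate part, the two Arf classes of a binary block being represented by $xy$ and $x^2+xy+y^2$. The relevance of the hypothesis is that $x^2+xy+y^2=(x+\omega y)(x+\omega^2 y)$ factors exactly when $F$ contains a primitive cube root of unity $\omega$, i.e.\ when $X^2+X+1$ is reducible; consequently an anisotropic binary block behaves over such a field like a hyperbolic one in the algebra invariants at our disposal, and I would show that these invariants (the dimensions of the successive quotients of the powers of the augmentation ideal) determine $\mathrm q$ precisely up to a single Arf-type discrepancy and no further.

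It remains to convert this one missing bit into the asserted dichotomy, and here the rank of the nondegenerate part is decisive. When $m(G)\le 2$ that rank is $0$ or $2$, and I would argue that a single binary block is still separated by a finer low-rank invariant surviving the passage to $\F_4$ — the prototype being the distinction between $D_8$ and $Q_8$, which persists over every field of characteristic $2$ — so that $\mathrm q$ is pinned down and necessarily $H\cong G$. When $m(G)>2$ there is room in the orthogonal decomposition of $\mathrm q$ to trade one hyperbolic plane for the anisotropic binary form; calling $\widetilde G$ the group produced by this Arf switch, I would verify $\widetilde G\not\cong G$ while checking that it shares every invariant we could extract from $FG$, so that the recoverable data constrains $H$ to exactly the two isomorphism types $G$ and $\widetilde G$, giving the single admissible alternative.

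The main obstacle I anticipate is twofold. First, one must prove rigorously that the augmentation-ideal dimensions compute only the $F$-theoretic content of $\mathrm q$ and nothing finer once $\F_4\subseteq F$; this requires tight control of $\gr(FG)$ and of how squaring in $FG$ absorbs the cube root $\omega$, which is precisely where the Arf information dissolves. Second, the exact location of the threshold at $m(G)=2$ is delicate: I must show that a lone binary block remains distinguishable over $\F_4$ while a hyperbolic-plus-anisotropic configuration does not, and then confirm in the range $m(G)>2$ that $\widetilde G$ is a genuinely distinct group, so that the ambiguity amounts to exactly one extra isomorphism type rather than none or several.
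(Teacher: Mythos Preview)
Your outline misplaces the difficulty. The second assertion of the theorem (for $m(G)>2$) is essentially free once the earlier work is done: Lemma~\ref{lemma:LastCaseStanding} already shows that if $FG\cong FH$ and $G\not\cong H$ then, up to swapping, $G$ is of the form \eqref{GroupClassification} with $\alpha=0$ and $H$ is of the form \eqref{GroupClassification2b}, both with the same multiset $L$. That is the ``exactly one alternative'' statement. You do not need to ``check that $\widetilde G$ shares every invariant we could extract from $FG$''; that would amount to proving $FG\cong F\widetilde G$, which the theorem does \emph{not} claim and the paper does not prove. What is needed is the converse direction --- that the determined invariants force $H$ into one of two types --- and that is precisely Lemma~\ref{lemma:LastCaseStanding}.

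All the substance is therefore in the first assertion, and here your proposal has a genuine gap. Saying that for $m(G)\le 2$ ``a single binary block is still separated by a finer low-rank invariant surviving the passage to $\F_4$'' is a restatement of the problem, not a solution. The $D_8$/$Q_8$ analogy is the case $n=1$; you must distinguish $Q(n,n)*Q(\ell_2,\ell_2)*\cdots$ from $R(n)*Q(\ell_2,\ell_2)*\cdots$ for every $n$, and the classical arguments for $D_8$ versus $Q_8$ do not scale. There is also a conceptual slip: the dimensions of $I(G)^s/I(G)^{s+1}$ are Jennings invariants and do not depend on $F$ at all, so they cannot be the place where ``the Arf information dissolves'' upon adjoining $\omega$.

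The paper's route is quite different and does not attempt to extend the Arf argument of Section~\ref{sec:QuadForms}. It builds an explicit map
\[
\Lambda:\frac{I(G)}{I(G)^3+I(\Omega_{n-1}(G:\ZZ(G)))FG}\longrightarrow \frac{I(G)^{2^n}}{I(G)^{2^n+2^{n-1}+1}+I(\ZZ(G))^{2^{n-1}+1}FG},\qquad x\mapsto x^{2^n},
\]
proves it is well-defined by a delicate induction (Lemma~\ref{lemma:nstep}), computes $\ker\Lambda$ exactly (Lemma~\ref{lemma:kerLambda}), and shows that its image in $I(G)/(I(\Omega_{n-1}(G:\ZZ(G)))FG+I(G)^2)\cong F^{2r}$ is the zero set $V(f_r,h_r)$ for $G=Q$ and $V(g_r,h_r)$ for $G=R$, with $h_r=\sum x_iy_i(x_i+y_i)$ a cubic (Proposition~\ref{fact:Vs}). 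An isomorphism $FQ\cong FR$ would induce a linear bijection between these varieties (Proposition~\ref{Prop:SufficientCondition}), and one then rules this out by a direct cardinality comparison for small $r$ (Lemma~\ref{lem:EstrategiaDeAngelMadrileno}), invoking the reduction theorem of \cite{GarciLucasDelRio23} to pass from finite to arbitrary fields. The invariant that does the work is thus not Arf-theoretic but the shape of the solution set of a system involving the \emph{cubic} $h_r$; this is the ``finer invariant'' you are missing.
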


The isomorphism types of the $2$-groups for which we can not give a complete answer over arbitrary fields is described in Lemma~\ref{lemma:LastCaseStanding}. The smallest of those have order $2^{12}$.

All our proofs rely on the explicit description of the groups of interest by Leong \cite{Leong1974, Leong1979}. The proof of Theorem~\ref{TheoremMIPCyclicCentreOdd} relies mostly on the analysis of well-known properties of $G$ which are known to be determined by the isomorphism type of $FG$, though we also provide a general lemma which can be useful for calculations in modular group algebras of $p$-groups of nilpotency class at most $p-1$ (Lemma~\ref{lem:FrobeniusMap}). The properties used for the groups of odd order turn out not to be sufficient when groups of even order are considered. A standard idea in this situation is to define certain maps on quotients of the group algebra and count the elements satisfying certain properties, as suggested in \cite{Bra63} and carried out for the first time in \cite{Passman1965p4}. We carry out a similar argument using a power map, but as we work not only with finite fields, the counting process is replaced by the comparison of quadratic forms in characteristic $2$ based on classical work of Arf \cite{Arf}. This comparison quite naturally leads to the condition on $F$ appearing in Theorem~\ref{TheoremMIPCyclicCentreEvenGoodFieds}. The proof of Theorem~\ref{TheoremMIPCyclicCentreEvenBadFields} on the other hand relies on a direct, rather technical calculation in the group algebra. This calculation leads to the question, if the solutions of certain polynomials can be transferred one to the other under a bijective linear map. This question in its generality has an algebraic geometry flavor  or a combinatorial one, if one restricts to finite fields. We can answer the question negatively in the situations stated in Theorem~\ref{TheoremMIPCyclicCentreEvenBadFields}. 

We would like to remark that the underlying field has usually not played a significant role in the investigations of the Modular Isomorphism Problem. The methods applied in Sections \ref{sec:QuadForms} and \ref{sec:General2Groups} indicate though that this might lead to interesting new methods which have not been used so far to study modular group algebras of $p$-groups. 

\begin{remark} To complete our account on the knowledge for groups of nilpotency class $2$, we mention that though the result for $2$-generated groups of class $2$ in \cite{BdR20} only mentions the prime field, their arguments for groups of odd order work over any field. This is not the case though, for the groups of even order.
\end{remark}

\section{Notation and description of the groups}

\subsection{Notation and general results}
Throughout the whole article $p$ will denote a prime and $G$ will always be a finite $p$-group and $F$ a field of characteristic $p$. By the \emph{class} of $G$ we will always mean its nilpotency class. Our group theoretical notation is mostly standard: $\Phi(G)$ denote the Frattini subgroup of $G$, $\gamma_i(G)$ the $i$-th term of the lower central series of $G$, $\ZZ(G)$ the center of $G$, $C_n$ the cyclic group of order $n$ and a group of class $2$ is not abelian. We also write $\gamma_2(G)=G'$. Moreover, for a non-negative integer $i$ we denote $\Omega_i(G)=\GEN{g\in G: g^{p^i}=1}$,  $G^{p^i}=\GEN{g^{p^i}: g\in G}$ and $\Omega_i(G:N)=\GEN{g\in G: g^{p^i}\in N}$ for $N$ any normal subgroup of $G$. For elements $g,h\in G$, we denote by $[g,h]=g^{-1}h^{-1}gh$ the commutator of $g$ and $h$. 

We will write that a property or an invariant of the group $G$ is \emph{determined} if for any other group $H$ with $FG\cong FH$, then  $H$ also satisfies that property or agrees in the invariant. For  example, a classical result of Jennings \cite{Jen41} states that for $s\geq 1$ the isomorphism type of the quotient 
$$\M_{s}(G)/\M_{s+1} (G) $$
is determined, where $\M_{s}(G)$ is the $s$-th term of the Jennings series of $G$, namely: 
\begin{equation}\label{eq:JenningsSeries} 	
\M_{s}(G)=\prod_{ip^j\geq s} \gamma_i(G)^{p^j},
\end{equation} 
In particular $\M_1(G)=G$ and  $\M_2(G)=\Phi(G) $, and thus the minimal number of generators of $G$, which equals the $p$-rank of $G/\Phi(G) $, is determined. Other examples are the property of $G$ being of nilpotency class $2$ or not (see \cite{BK07}) or the isomorphism type of $\ZZ(G)$ (see \cite{Ward}).   

We denote $I(G)$ the augmentation ideal of $FG$, i.e., the ideal generated by the elements of the form $g-1$, with $g\in G$.
We say an ideal $J$ of $FG$ is \emph{canonical}, if it depends only on the algebra structure of $FG$, that is,  it does not depend  on the   considered basis. For example, the augmentation ideal is canonical, as it is the Jacobson radical of $FG$. Also the powers of $I(G)$ are canonical, and, also by the same result of Jennings \cite{Jen41}, for $s\geq 1$ one has that 
$$I(G)^s\cap (G-1)=\M_s(G)-1.  $$
Given a normal subgroup $N$ of $G$, we denote by $I(N)FG$   the (two-sided) ideal of $FG$ generated by the elements of the form $n-1$, with $n\in N$. Then $I(N)FG\cap (G-1)=N-1$, and $FG/I(N)FG\cong F\left[G/N\right]$. The ideal  $I(\ZZ(G)G')FG$   is also canonical, as it coincides with the ideal generated by the subspace of the Lie commutators $[FG,FG]$ together with $\ZZ(FG)\cap I(G)$, as indicated in \cite{Sandling85} or in \cite{BK07}.
For each $i\geq 1$, the ideal $I(\Omega_i(G:G'))FG$ is canonical (see \cite[Theorem 3.1]{SalimTesis}).
Moreover, as $I(\ZZ(G))FG$ is canonical, so is $I(\Omega_i(G:\ZZ(G)))FG$ for every non-negative integer $i$ by \cite[Lemma 3.6]{Diego22}. 

Given two groups $G$ and $H$, we denote by $G*_\varphi H$ the central product of $G$ and $H$ at $\varphi$, i.e. there is a homomorphism $\varphi: \mathcal{Z}(G) \rightarrow \mathcal{Z}(H)$ and $G*_\varphi H = G \times H/\langle z\varphi(z^{-1}) \ | \ z \in \mathcal{Z}(G) \rangle$. In our situation $G$ and $H$ will always be $p$-groups with cyclic center, so that there is a monomorphism between their centers. Note that if $\varphi$ and $\psi$ are monomorphisms between cyclic groups, then $G *_\varphi H \cong G*_\psi H$. This will be in fact the only case of interest to us and we will simply write $G*H$ in this case.

%
%
%


\subsection{The groups}
Our proof relies on the following two results of Leong, which classify the groups of our target class up to isomorphism. 

  If $n ,r $ are integers such that $0\leq r \leq n$,   let  $Q(n,r)$ be the group  
 \begin{equation}\label{eq:DefQnr}
 Q (n,r)=\begin{cases}
 \GEN{   a,b \ \mid \	a^{p^{n}}=b^{p^{r}}=1 , \ a^{p^{n-r}}=[a,b]  } , & \text{if }2r \leq n ; \\
 \GEN{a,b \ \mid \	a^{p^{n}}=b^{p^{r}}=1, \ a^{p^{r}}=[a,b]^{p^{2r-n} }, \ [a,[a,b]]= [b,[a,b]]=1}, & \text{if } r \leq n <2r. 
 \end{cases}
 \end{equation}
Note that $Q(n,0)$ is the cyclic group of order $p^n$.

 If $p=2$, we consider also 
 \begin{equation}\label{eq:DefRn}
 R(n)=\GEN{a,b \ \mid \  a^{2^{n+1}}= b^{2^{n+1}}=1, a^{2^n}=[a,b]^{2^{n-1}}=b^{2^n}, [a,[a,b]]=[b,[a,b]]=1 }
 \end{equation} 
 for $n\geq 1$ some positive integer.  
We will work with central products of several groups of the form $Q(n,r)$. In that case we will write $Q_i(n,r)=\GEN{a_i,b_i}$ for a copy of $Q(n,r)$, with the same presentation for $a_i$ and $b_i$ as in \eqref{eq:DefQnr}, where  $i$ is some positive integer. The generators of $R(n)$ will always be denoted by $a$ and $b$ and assumed to satisfy the relations of \eqref{eq:DefRn}.

 \begin{theorem}[\cite{Leong1974}]  \label{TheoremClassification}
 	Let  $p$ be an odd prime. If  $G$ is a non-trivial finite $p$-group with cyclic center and nilpotency class at most $2$, then $G$ is isomorphic to a group of the form:
 	\begin{equation}\label{GroupClassification}
 	Q_1(n_1,r_1)*\dots * Q_\alpha(n_\alpha,r_\alpha) * Q_{\alpha+1}(\ell_1,\ell_1) *\dots  *Q_{\alpha+\beta } (\ell_\beta,\ell_\beta)  ,
 	\end{equation} 
 	for some non-negative   integers $\alpha,\beta,n_1,r_1,\dots, n_\alpha, r_\alpha, \ell_1,\dots ,\ell_\beta $, satisfying the following conditions: 
 	\begin{equation}\label{ConditionsClassification}
 	\begin{cases}
 	\alpha+\beta>0 ,\\
 	n_1>n_2>\dots >n_\alpha, \\
 	n_\alpha>   \ell_1\geq \ell_2\geq \dots\geq \ell_\beta \geq 1, \\
 	n_\alpha> r_1>r_2>\dots >r_\alpha \geq 0,  \\ 
 	n_1-r_1<n_2-r_2< \dots <n_\alpha-r_\alpha.
 	\end{cases}
 	\end{equation}  
 		Moreover, if two   groups of  the form \eqref{GroupClassification}  with parameters satisfying  \eqref{ConditionsClassification} are isomorphic, then they have the same lists of parameters.
  \end{theorem}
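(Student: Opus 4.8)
The plan is to prove existence and uniqueness separately, in both cases encoding $G$ by its commutator pairing. Since $p$ is odd and $G$ has class at most $2\le p-1$, I would first pass to the data of the associated class-$2$ Lie ring via the Baer correspondence, or equivalently work directly with the alternating map
\[
\beta\colon V\times V\to G',\qquad \beta(\bar x,\bar y)=[x,y],
\]
where $V=G/\ZZ(G)$ is a finite abelian $p$-group, $G'\le \ZZ(G)$ is cyclic, and $\beta$ is well defined and non-degenerate because $\ZZ(G)=\{g:[g,G]=1\}$. The remaining extension data is carried by the $p$-power map $V\to \ZZ(G)$, $\bar x\mapsto x^{p^{k}}$, which for $p$ odd behaves additively modulo $\ZZ(G)$-corrections by the class-$2$ identity $(xy)^{p}=x^{p}y^{p}[y,x]^{\binom p2}$. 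Classifying $G$ thus becomes the classification of the triple $(V,\beta,\text{power map})$ subject to $G'$ and $\ZZ(G)$ being cyclic.

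For existence I would induct on $|G|$, peeling off one factor at a time. Choose $a,b\in G$ such that $[a,b]$ generates $G'$ and such that, among all such pairs, the order of $a$ is maximal; one checks that $\langle a,b\rangle$ is isomorphic to a group $Q(n,r)$ as in \eqref{eq:DefQnr}, the first case $2r\le n$ occurring when $a$ has a nontrivial central power $a^{p^{r}}$ and the balanced case $Q(\ell,\ell)$ occurring when $a^{p^{r}}=1$. Because $\beta$ is non-degenerate on the plane $\langle\bar a,\bar b\rangle$, the centralizer $C=\Cen_G(\langle a,b\rangle)$ should play the role of an orthogonal complement, giving $G=\langle a,b\rangle\,C$ and $\langle a,b\rangle\cap C\le \ZZ(G)$, so that $G\cong \langle a,b\rangle * C$. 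Since $C$ again has class at most $2$, shares the cyclic center $\ZZ(G)$, and is smaller, induction produces a decomposition as in \eqref{GroupClassification}. Finally I would normalize: reorder the factors so that the $n_i$ strictly decrease, collect the balanced blocks into the $Q(\ell_j,\ell_j)$ with $\ell_1\ge\cdots\ge\ell_\beta$, and check that the maximality in the choice of $a$ forces the strict inequalities $r_1>\cdots>r_\alpha$ and $n_1-r_1<\cdots<n_\alpha-r_\alpha$ of \eqref{ConditionsClassification}.

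For uniqueness I would show the parameter list is read off from isomorphism invariants of $G$. Using that $(G_1*G_2)/\ZZ(G_1*G_2)\cong G_1/\ZZ(G_1)\times G_2/\ZZ(G_2)$, the abelian group $G/\ZZ(G)$ is isomorphic to $\bigoplus_i C_{p^{r_i}}^{2}\oplus\bigoplus_j C_{p^{\ell_j}}^{2}$, which recovers the multiset $\{r_i\}\cup\{\ell_j\}$ with multiplicities. The order of $\ZZ(G)$, the order of $G'$ and its position inside the cyclic group $\ZZ(G)$, together with the orders of lifts to $G$ of a basis of $G/\ZZ(G)$ — equivalently the sequences $|\Omega_i(G)|$ and $|G^{p^i}|$ — then separate the $r_i$ from the $\ell_j$ and recover each $n_i$. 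Under the strict inequalities in \eqref{ConditionsClassification} the ordered list is then unique, so two isomorphic groups of the form \eqref{GroupClassification} have identical parameters.

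The main obstacle is the splitting step over $\Z/p^{k}$ rather than over a field: one must guarantee both that the non-degenerate plane $\langle\bar a,\bar b\rangle$ admits an orthogonal complement inside the finite abelian group $V$ and that this complement lifts to a genuine central factor $C$ compatible with the power map, which is exactly where the cyclicity of $\ZZ(G)$ is essential. Tied to this is the delicate choice of $(a,b)$ realizing the correct $n$ and $r$ simultaneously; getting the strict orderings to come out, and correctly distinguishing the $Q(n_i,r_i)$ blocks from the balanced $Q(\ell_j,\ell_j)$ blocks, is the most technical part of the argument.
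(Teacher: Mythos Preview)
The paper does not prove this statement at all: Theorem~\ref{TheoremClassification} is quoted verbatim from Leong~\cite{Leong1974} and used as a black box, with no argument supplied in the present paper. There is therefore no ``paper's own proof'' to compare your proposal against.

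That said, your outline is a reasonable sketch of how Leong's classification can be established. The strategy of encoding $G$ by the alternating pairing $\beta\colon G/\ZZ(G)\times G/\ZZ(G)\to G'$ together with the $p$-power data, and then splitting off a two-generator factor $\langle a,b\rangle\cong Q(n,r)$ orthogonally, is essentially the standard approach for class-$2$ groups with cyclic center. You have correctly identified the genuine difficulty: the orthogonal-complement step is over $\Z/p^k$ rather than a field, and one must ensure the complement respects both the pairing and the power map simultaneously. Your proposed maximality condition on the order of $a$ is not quite enough on its own to force all the strict inequalities in~\eqref{ConditionsClassification}; Leong's argument is more careful here, first normalizing the pair $(a,b)$ so that the resulting $Q(n,r)$ has both the largest $n$ and the smallest $n-r$ among all two-generator factors, and this double extremality is what makes the induction compatible with the orderings. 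Your uniqueness sketch via $G/\ZZ(G)$ and the power filtration is along the right lines but would need sharpening: the multiset $\{r_i\}\cup\{\ell_j\}$ alone does not separate the $r_i$ from the $\ell_j$, and the paper's later Lemmas~\ref{ralfadet} and~\ref{lemma:criterionr} show that rather delicate information (intersections of $G^{p^t}$ with $\ZZ(G)$) is needed to recover the $n_i$ individually.
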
 
 
  \begin{theorem}[\cite{Leong1979}]  \label{TheoremClassification2}  If $G$ is a non-trivial finite $2$-group with cyclic center and nilpotency class $2$, then either	$G$ is isomorphic to a group of the form \eqref{GroupClassification} for some non-negative integers $\alpha,\beta,n_1,r_1,\dots, n_\alpha, r_\alpha,  \ell_1,\dots ,\ell_\beta $  satisfying \eqref{ConditionsClassification} and $1<n_1-r_1$, 
 	or $G$ is isomorphic to a group of the form
 	\begin{equation}\label{GroupClassification2b}
 	R(n)*Q_1(\ell_1,\ell_1)*\dots * Q_\beta (\ell_\beta,\ell_\beta) 
 	\end{equation}
 	for  non-negative integers $\beta ,n,\ell_1,\dots, \ell_\beta$ satisfying
 	\begin{equation}\label{ConditionsClassification2b} 
 	n \geq  \ell_1\geq \ell_2\geq \dots\geq \ell_\beta \geq 1 . 
 	\end{equation}  
 	
 	Moreover, if two   of  the described groups are isomorphic, then either both groups are of the form \eqref{GroupClassification} with the same lists of parameters satisfying \eqref{ConditionsClassification} or they are both of the form  \eqref{GroupClassification2b}   with the same lists of parameters satisfying \eqref{ConditionsClassification2b}.
 \end{theorem}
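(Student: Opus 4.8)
The plan is to reconstruct $G$ from the geometric data carried by its commutator map, and---in characteristic $2$---by the squaring map. Write $Z=\ZZ(G)$; since $G$ has class $2$ we have $G'\le Z$, and as $Z$ is cyclic so is $G'$, say $G'\cong C_{2^m}$. The commutator induces a well-defined $\Z$-bilinear alternating map $\beta\colon G/Z\times G/Z\to G'$, $\beta(\bar x,\bar y)=[x,y]$; it is nondegenerate because $\bar x$ lies in its radical precisely when $x$ centralizes $G$, i.e.\ when $\bar x=0$. Injectivity of the adjoint $G/Z\to\Hom(G/Z,G')$ together with finiteness then forces $\exp(G/Z)\mid 2^m$ and makes $\beta$ a perfect pairing. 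The guiding principle, which I would isolate as a lemma, is that central products amalgamating the cyclic center mirror orthogonal decompositions of $(G/Z,\beta)$: an orthogonal splitting of $\beta$ lifts to a central-product decomposition of $G$, and conversely.

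For existence I would induct on $|G|$. Choose $a,b\in G$ with $[a,b]$ of maximal order; then $[a,b]$ generates $G'$, the plane $\GEN{\bar a,\bar b}$ is nondegenerate for $\beta$, and $G/Z=\GEN{\bar a,\bar b}\perp W$ with $W=\GEN{\bar a,\bar b}^{\perp}$. Correcting $a,b$ by central elements to normalize their orders and the position of their $2$-power values relative to $[a,b]$, the block $\GEN{a,b}$ acquires a presentation \eqref{eq:DefQnr}, i.e.\ is some $Q(n,r)$. Letting $K$ be the preimage of $W$ in $G$, one checks that $[K,\GEN{a,b}]=1$ and $G=\GEN{a,b}*K$, that $\ZZ(K)\le Z$ is again cyclic and $K$ again has class $\le 2$, so the induction hypothesis applies; the process terminates when the complement is central, hence cyclic. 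The blocks with $r<n$ carry the central bulk making $Z$ larger than $G'$, while the blocks with $n=r=\ell$, where both generators have full order and $[a,b]$ generates a maximal $G'$, are the factors $Q(\ell_j,\ell_j)$. Reordering the factors by the invariants $n_i-r_i$ and $r_i$ produces the normal form \eqref{GroupClassification} subject to \eqref{ConditionsClassification}.

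The main obstacle, and the reason the case $p=2$ needs the extra family $R(n)$, is the normalization in the lifting step. For odd $p$ the class-$2$ identity $(xy)^p=x^py^p[y,x]^{\binom p2}$ has $\binom p2\equiv 0\pmod p$, so the $p$-th power map is a homomorphism that can be linearized away by central corrections; every plane then normalizes to a $Q(n,r)$, which is exactly why Theorem~\ref{TheoremClassification} has no exceptional family. For $p=2$ the surviving term in $(xy)^2=x^2y^2[y,x]$ makes squaring genuinely quadratic: it descends to a quadratic form $q$ on a suitable quotient of $G/Z$ polarizing to $\beta$, and the isometry class of $q$, not merely of $\beta$, controls the lift. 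Here an Arf-type obstruction appears---a single hyperbolic plane may fail to admit a $Q(n,r)$-lift, and such a plane forces the indecomposable two-generator block to be $R(n)$ of \eqref{eq:DefRn}. I would then show that two anomalous planes always recombine into $Q(n,r)$-blocks, so that $R(n)$ occurs at most once, producing the alternative \eqref{GroupClassification2b}--\eqref{ConditionsClassification2b}. Proving that this exceptional indecomposable is exactly $R(n)$, and occurs at most once, is where the genuine work lies.

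Finally, uniqueness amounts to recovering the parameter lists as isomorphism invariants of $G$. The isomorphism type of $\ZZ(G)$ yields $2^{n_\alpha-r_\alpha}$ and that of $G'$ yields $2^{r_1}$, while the pair $(G/Z,\beta)$---refined by $q$ when $p=2$---recovers the multiset of plane invariants, from which the orders of the generators pin down each $(n_i,r_i)$ and each $\ell_j$; the ranks of the quotients $\Omega_i(G)/\Omega_{i-1}(G)$ and the orders of $G^{2^i}$ supply the remaining bookkeeping, and the quadratic invariant of $q$ distinguishes the two alternatives. Since the strict inequalities in \eqref{ConditionsClassification} make the normal form rigid, matching these invariants forces the parameter lists to coincide.
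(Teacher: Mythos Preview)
The paper does not contain a proof of this theorem: it is quoted from Leong's 1979 paper \cite{Leong1979} and used as a black box. There is therefore no ``paper's own proof'' to compare your sketch against.

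That said, your outline follows the natural strategy one expects for such a classification: pass to the alternating form $\beta$ on $G/Z$ induced by commutation, decompose orthogonally into hyperbolic planes, and lift each plane to a two-generator subgroup amalgamated over the cyclic center. For $p=2$ you correctly identify the crucial point, that the squaring map $(xy)^2=x^2y^2[y,x]$ is genuinely quadratic rather than linear, so one must track a quadratic refinement $q$ of $\beta$, and the Arf-type invariant of $q$ decides whether an anomalous block $R(n)$ is forced. A couple of places where your sketch would need real work to become a proof: first, the assertion ``two anomalous planes always recombine into $Q(n,r)$-blocks'' is the heart of the matter and is only announced, not argued; second, you do not account for the extra condition $n_1-r_1>1$ that the $p=2$ statement imposes on the form \eqref{GroupClassification}, which is needed precisely to keep the two families disjoint; and third, the uniqueness paragraph gestures at invariants (orders of $G^{2^i}$, ranks of $\Omega_i/\Omega_{i-1}$, the class of $q$) without checking that they actually separate all parameter lists satisfying the stated inequalities. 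If you want to turn this into a self-contained proof, those three items are where the substance lies; but for the purposes of the present paper the result is simply imported from the literature.
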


 An immediate inductive argument shows that:
 \begin{lemma}\label{LemmaPowers}
 	Let $G$ be of class $2$.  Then for each $g,h\in G$ and $n\geq 1$
 	$$(gh)^{n}=g^{n}h^{n} [h,g]^{\frac{n(n-1)}{2}} .$$   
 \end{lemma}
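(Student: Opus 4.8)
The plan is to prove the identity by induction on $n$, exploiting the defining feature of class $2$: since $\gamma_2(G)=G'$ is contained in $\ZZ(G)$, every commutator $[h,g]$ is central, so commutator factors may be moved freely to the right and their exponents simply added. The only facts about commutators I would use are that $[g,h]^{-1}=[h,g]$ and, directly from the definition $[g,h]=g^{-1}h^{-1}gh$, the basic commutation rule $hg=gh[h,g]$.

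The first step is a sliding lemma: for every $n\geq 1$ one has $h^n g = g h^n [h,g]^n$. This follows by a short induction from the commutation rule, pushing one copy of $h$ past $g$ at a time and collecting the resulting central commutators; centrality of $[h,g]$ is exactly what guarantees that each of the $n$ steps contributes the same central factor $[h,g]$, so the $n$ factors accumulate into $[h,g]^n$ without any interference from the intervening letters.

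With this in hand the main induction is routine. The base case $n=1$ is immediate. Assuming $(gh)^n=g^nh^n[h,g]^{\frac{n(n-1)}{2}}$, I would write $(gh)^{n+1}=(gh)^n\,gh=g^nh^n[h,g]^{\frac{n(n-1)}{2}}gh$, pull the central factor $[h,g]^{\frac{n(n-1)}{2}}$ to the right, and then apply the sliding lemma to replace $h^n g$ by $g h^n [h,g]^n$. Collecting terms produces $g^{n+1}h^{n+1}[h,g]^{\frac{n(n-1)}{2}+n}$, and the exponent bookkeeping $\frac{n(n-1)}{2}+n=\frac{n(n+1)}{2}$ closes the induction.

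There is no genuine obstacle here; the entire content is the centrality of $G'$, which converts a noncommutative rearrangement into mere addition of commutator exponents. The one place to be careful is the bookkeeping of those exponents together with a consistent commutator convention, so that the accumulated exponent comes out as the triangular number $\frac{n(n-1)}{2}$ rather than something off by a sign or by an index shift.
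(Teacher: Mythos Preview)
Your proposal is correct and is precisely the ``immediate inductive argument'' the paper alludes to without spelling out: induct on $n$, use centrality of $[h,g]$ to slide $h^n$ past $g$ picking up $[h,g]^n$, and collect the triangular exponent. There is nothing to add.
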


 The following two lemmas are obvious consequences of the presentations above, and of \Cref{LemmaPowers} for the first item of each.
 \begin{lemma}
 	 	Let $G=	Q(n,r)$. Then $G$ has nilpotency class at most $2$ and
 	\begin{enumerate}
 		\item If  $p>2$ or $n>r$, then $\exp(G)=p^n$. Otherwise $\exp(G)=2^{n+1}$ . 
 		\item $G'\cong C_{p^r}$. 
 		\item $G/G'\cong C_{p^{\max\{ r,n-r\}}} \times C_{p^r}$. Here the first factor is generated by the image of $a$ in $G/G'$ and the second by the image of $b$. 
 		\item  $\mathcal Z(G) = G' \langle a^{p^r} \rangle\cong C_{p^{\max(r,n-r)}}$. When $n < 2r$ this group equals $G'$ and when $2r \leq n$, it is generated by $a^{p^r}$.
 		\item  $G/\mathcal Z(G) \cong C_{p^r}\times C_{p^r}. $  
 	\end{enumerate}
 \end{lemma}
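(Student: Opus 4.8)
The plan is to prove each of the five statements directly from the presentation \eqref{eq:DefQnr} of $G = Q(n,r)$, using \Cref{LemmaPowers} to handle the power computations. The key structural facts I would establish first are the basic commutator relations in $G$: since $G$ has two generators $a,b$ and the single commutator $[a,b]$ is central (it commutes with both $a$ and $b$, either by the defining relations in the case $r \leq n < 2r$, or because $[a,b] = a^{p^{n-r}}$ is a power of $a$ when $2r \leq n$), the group has class at most $2$ and $G' = \GEN{[a,b]}$ is cyclic and central. I would compute the order of $[a,b]$ in each case to get item (2), namely $G' \cong C_{p^r}$: in the case $2r \le n$ this follows from $[a,b] = a^{p^{n-r}}$ together with $a^{p^n} = 1$, giving $[a,b]$ order $p^r$; in the case $r \le n < 2r$ one reads it from the relation $a^{p^r} = [a,b]^{p^{2r-n}}$ and $a^{p^n}=1$.

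Next I would pin down the abelianization and the exponent. For item (3), the quotient $G/G'$ is abelian generated by the images $\overline a, \overline b$; I would determine the orders of these images using the defining relations modulo $G'$ (so $\overline a^{p^{\max\{r,n-r\}}} = 1$ and $\overline b^{p^r} = 1$) and check the direct product decomposition $C_{p^{\max\{r,n-r\}}} \times C_{p^r}$ by a straightforward order count against $|G| = p^{n+r}$. Item (1), the exponent, is where \Cref{LemmaPowers} enters: for $p > 2$ the correction term $[h,g]^{n(n-1)/2}$ is a power of $p^r \mid$-torsion that does not raise orders beyond $p^n$, so $\exp(G) = p^n$; for $p = 2$ and $n = r$ the extra factor $[b,a]^{\binom{2^n}{2}}$ does not vanish and I expect it to force an element of order $2^{n+1}$, which is the source of the exceptional value $\exp(G) = 2^{n+1}$.

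For items (4) and (5) I would locate the center. Because $G$ is generated by $a, b$, an element is central iff it commutes with both, and computing $[a^i b^j, a] $ and $[a^i b^j, b]$ using bilinearity of the commutator modulo the central $G'$ shows $\mathcal Z(G)$ consists of those products whose "$b$-exponent" and the relevant "$a$-exponent" are killed. This yields $\mathcal Z(G) = G'\GEN{a^{p^r}}$; distinguishing whether $a^{p^r}$ already lies in $G'$ (the case $n < 2r$, where $\mathcal Z(G) = G' \cong C_{p^r}$) or generates a strictly larger cyclic group (the case $2r \le n$, where $a^{p^r}$ has order $p^{n-r}$) gives the stated isomorphism $\mathcal Z(G) \cong C_{p^{\max(r,n-r)}}$. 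Finally item (5) follows by combining (3) and (4): $G/\mathcal Z(G)$ is a quotient of $G/G'$ by $\mathcal Z(G)/G'$, and an order count gives $|G/\mathcal Z(G)| = p^{2r}$ with both generator images of order $p^r$, hence $G/\mathcal Z(G) \cong C_{p^r} \times C_{p^r}$.

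I expect the only genuinely delicate point to be item (1) in the even case $p = 2$, $n = r$, where the $\binom{n}{2}$-type correction term in \Cref{LemmaPowers} must be tracked carefully to see that $\exp(G)$ jumps to $2^{n+1}$ rather than staying $2^n$; everything else is bookkeeping with the central commutator relations and comparing orders against $|G| = p^{n+r}$. Since the lemma is explicitly flagged in the excerpt as an ``obvious consequence of the presentations,'' I would keep each verification short and lean on \Cref{LemmaPowers} and centrality of $[a,b]$ throughout.
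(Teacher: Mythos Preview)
Your proposal is correct and matches the paper's approach exactly: the paper states this lemma without proof, remarking only that it is an ``obvious consequence of the presentations above, and of \Cref{LemmaPowers} for the first item,'' which is precisely the strategy you outline. The only minor omission in your sketch is the case $p=2$ with $n>r$ in item~(1), but the same \Cref{LemmaPowers} argument you describe handles it without difficulty.
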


\begin{lemma}
	Let $p=2$ and $G=R(n)$. Then $G$ has nilpotency class   $2$ and
	\begin{enumerate}
		\item $\exp(G)=2^{n+1}$.
		\item $G'=\ZZ(G)\cong C_{p^n}$. 
		\item $G/G'\cong C_{2^{n}}\times C_{2^n}$. 
	\end{enumerate}
\end{lemma}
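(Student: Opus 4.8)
The final statement is the lemma about $R(n)$ for $p=2$. Let me sketch a proof plan.

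\medskip

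The plan is to establish the three claims directly from the defining relations of $R(n)$ given in \eqref{eq:DefRn}, namely $a^{2^{n+1}}=b^{2^{n+1}}=1$, $a^{2^n}=[a,b]^{2^{n-1}}=b^{2^n}$, and $[a,[a,b]]=[b,[a,b]]=1$. The last pair of relations says precisely that $[a,b]$ is central, so that $G'=\GEN{[a,b]}$ lies in $\ZZ(G)$ and $G$ has nilpotency class at most $2$; since $a$ and $b$ do not commute, the class is exactly $2$. This immediately licenses the use of \Cref{LemmaPowers} throughout.

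\medskip

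For part (2), I would first determine the order of the commutator $c:=[a,b]$. The relation $a^{2^n}=c^{2^{n-1}}$ together with $a^{2^{n+1}}=1$ forces $c^{2^n}=a^{2^{n+1}}=1$, so $c$ has order dividing $2^n$. To see the order is exactly $2^n$, I would argue that $a^{2^n}=c^{2^{n-1}}$ is nontrivial: if $c^{2^{n-1}}=1$ then $a^{2^n}=1$, which would collapse the group to something of smaller order, contradicting the expected order coming from Leong's classification. Concretely, one checks that the assignment respecting all relations genuinely yields $\ord(a)=2^{n+1}$, whence $a^{2^n}\neq 1$ and so $c^{2^{n-1}}\neq 1$, giving $\ord(c)=2^n$ and $G'\cong C_{2^n}$. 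Since $c$ is central, the relations $a^{2^n}=c^{2^{n-1}}=b^{2^n}$ show that the central elements $a^{2^n}$ and $b^{2^n}$ already lie in $G'=\GEN{c}$, so no new central generators arise from the squares of $a$ or $b$; thus $\ZZ(G)=G'\cong C_{2^n}$, establishing (2). The expected main obstacle is exactly this order computation: confirming $\ZZ(G)$ is not strictly larger than $G'$ requires knowing that $a$ and $b$ contribute nothing central beyond $a^{2^n}=b^{2^n}\in G'$, which one sees by noting that any central element must commute with both generators and analysing the coset structure modulo $G'$.

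\medskip

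For part (3), I would compute $G/G'$. Passing to the quotient kills $c$, so the images $\overline a,\overline b$ commute and satisfy $\overline a^{\,2^{n+1}}=\overline b^{\,2^{n+1}}=1$ together with $\overline a^{\,2^n}=\overline b^{\,2^n}$ (both equal to the image of $c^{2^{n-1}}=1$). Hence $\overline a^{\,2^n}=\overline b^{\,2^n}=1$ in $G/G'$, so $G/G'$ is a quotient of $C_{2^n}\times C_{2^n}$ generated by $\overline a,\overline b$ each of order dividing $2^n$. Comparing orders, $|G/G'|=|G|/2^n$, and since Leong's data give $|G|=2^{2n+1}\cdot\text{(contribution of }c\text{)}$ consistently with $|G/G'|=2^{2n}$, one concludes $\overline a,\overline b$ each have order exactly $2^n$ and generate independently, so $G/G'\cong C_{2^n}\times C_{2^n}$. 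I would make this rigorous by exhibiting no relation between $\overline a$ and $\overline b$ other than the $2^n$-th power relations, which follows once the total order of $G$ is pinned down; the cleanest route is to count $|G|=2^{3n+1}$ by writing every element uniquely as $a^i b^j c^k$ with $0\le i,j<2^{n+1}$ reduced modulo the relation $a^{2^n}=b^{2^n}=c^{2^{n-1}}$ and $0\le k<2^n$, and then dividing by $|G'|=2^n$.
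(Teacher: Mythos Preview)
Your approach is the same as the paper's (which in fact gives no proof beyond the remark that the claims are ``obvious consequences of the presentations above, and of \Cref{LemmaPowers} for the first item of each''), but there are two issues worth flagging.

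First, you never address part (1). The exponent claim $\exp(G)=2^{n+1}$ is not implied by parts (2) and (3); one needs to argue that every element has order dividing $2^{n+1}$, and this is precisely where \Cref{LemmaPowers} enters: writing a general element as $a^ib^jc^k$ (with $c=[a,b]$ central), one computes $(a^ib^j)^{2^{n+1}} = a^{i\cdot 2^{n+1}}b^{j\cdot 2^{n+1}}[b^j,a^i]^{\binom{2^{n+1}}{2}}$, and since $\binom{2^{n+1}}{2}=2^n(2^{n+1}-1)$ is divisible by $2^n=\ord(c)$, all three factors are trivial. That $\ord(a)=2^{n+1}$ gives the lower bound.

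Second, your order count is off: the group $R(n)$ has order $2^{3n}$, not $2^{3n+1}$. In the normal form $a^ib^jc^k$ with $0\le i,j<2^{n+1}$ and $0\le k<2^n$, the two independent identifications $a^{2^n}=c^{2^{n-1}}$ and $b^{2^n}=c^{2^{n-1}}$ each cut the count by a factor of $2$, giving $2^{3n+2}/4=2^{3n}$. Equivalently, the clean normal form has $0\le i,j<2^n$ and $0\le k<2^n$, for $2^{3n}$ elements; this matches $|G|=|G'|\cdot|G/G'|=2^n\cdot 2^{2n}$. Your center argument and the abelianization argument are otherwise fine once this is corrected.
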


As an immediate consequence of the definition of central product and conditions \eqref{ConditionsClassification}  and Lemma 2.4 we have

\begin{lemma}\label{GeneralRemark}
	Let $G$ be the group \eqref{GroupClassification} with the parameters satisfying \eqref{ConditionsClassification}. Then:
	\begin{enumerate}
		\item \label{ExponentGeneral} The exponent of $G$ is given by  $$\exp(G)=\begin{cases}
			 p^{n_1}, & \text{if }\alpha\neq 0; \\
			 p^{\ell_1}, & \text{if }\alpha=0\text{ and }p>2; \\
			 p^{\ell_1+1},&\text{if }\alpha=0\text{ and }p=2.
		\end{cases}$$  
		\item\label{CenterGeneral} The maximum of the orders of the centers of the factors involved in the central product is exactly the order of the center of the central product. That is,
		$$ \mathcal Z(G)\cong C_{p^\zeta},\qquad \text{where } \qquad \zeta= \max(r_1,n_\alpha-r_\alpha,\ell_1).$$
		\item\label{DerivedGeneral} The maximum of the orders of the derived subgroups of the factors involved in the central product is exactly the order of the derived subgroup of the central product. That is, 
		$$G'\cong C_{p^\delta}, \qquad \text{where } \qquad \delta= \max(r_1,  \ell_1). $$
		\item \label{QCenterGeneral}
		There is an isomorphism   $$\frac{G}{\mathcal Z(G)}\cong  \frac{Q_1(n_1,r_1)}{\ZZ(Q_1(n_1,r_1))}  \times \dots \times \frac{Q_{\alpha+\beta}(\ell_\beta,\ell_\beta )}{\ZZ(Q_{\alpha+\beta}(\ell_\beta,\ell_\beta ))} \cong \prod_{i=1}^\alpha C_{p^{r_i}}^2 \times \prod_{i=1}^\beta C_{p^{\ell_i}}^2. $$ 
		%
		%
		%
		\item The minimal number of generators of $G$ is given by
		$$d(G) =\begin{cases}
		2(\alpha+\beta )-1, &\text{if }r_\alpha=0; \\
		2(\alpha +\beta), &\text{otherwise.}
		\end{cases} $$

	\end{enumerate}
\end{lemma}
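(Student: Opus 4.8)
The plan is to reduce every item to two ingredients: the data on each individual factor furnished by Lemma 2.4, together with a handful of elementary facts about a central product $G = G_1 * \cdots * G_k$ of $p$-groups of class at most $2$ whose cyclic centers are amalgamated, by monomorphisms, into a single cyclic group (the one of largest order among them).

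First I would record these general facts. Writing an element of $G$ as a product $g_1 \cdots g_k$ with each $g_i$ in the image of $G_i$, the factors commute elementwise, so $[g_1 \cdots g_k, h_j] = [g_j, h_j]$; this shows that $\ZZ(G)$ is precisely the image of $\prod_i \ZZ(G_i)$, and since all $\ZZ(G_i)$ are cyclic and amalgamated into the largest of them, this image is cyclic of order $p^\zeta$ with $\zeta = \max_i c_i$, where $p^{c_i}$ is the order of $\ZZ(G_i)$. Inserting $c_i = \max(r_i, n_i - r_i)$ for the factors $Q_i(n_i,r_i)$ and $c_{\alpha+j} = \ell_j$ for the factors $Q_{\alpha+j}(\ell_j,\ell_j)$ from Lemma 2.4, and using that $(r_i)$ decreases, $(n_i - r_i)$ increases and $(\ell_j)$ decreases, yields item (2). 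The same elementwise commuting gives $(g_1 \cdots g_k)^p = \prod_i g_i^p$ and $[g_1 \cdots g_k, h_1 \cdots h_k] = \prod_i [g_i, h_i]$, from which I would deduce $\exp(G) = \max_i \exp(G_i)$, that $G'$ is the image of $\prod_i G_i'$, and that $\Phi(G)$ is the image of $\prod_i \Phi(G_i)$; dividing the last by the center then gives $G/\ZZ(G) \cong \prod_i G_i/\ZZ(G_i)$.

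With these in hand the remaining items are arithmetic. For (1) I would combine $\exp(Q_i(n_i,r_i)) = p^{n_i}$, valid since $n_i > r_i$ for every $i \leq \alpha$, with $\exp(Q_{\alpha+j}(\ell_j,\ell_j)) = p^{\ell_j}$ for $p$ odd and $2^{\ell_j+1}$ for $p = 2$, and read off the dominant term in each of the three cases using $n_1 > \ell_1 \geq \ell_j$, hence $n_1 \geq \ell_j + 1$. For (3), each $G_i'$ is a subgroup of the cyclic group $\ZZ(G)$, so the subgroups $G_i'$ are totally ordered and $G' = \prod_i G_i'$ is the largest, of order $p^{\max(r_1,\ell_1)}$. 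Item (4) is then immediate from $G/\ZZ(G) \cong \prod_i G_i/\ZZ(G_i)$ and Lemma 2.4(5).

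The one item demanding genuine care is (5), and it is where I expect the main obstacle. Using $d(G) = \dim_{\F_p} G/\Phi(G)$ and $\Phi(G) = $ image of $\prod_i \Phi(G_i)$, one gets $G/\Phi(G) \cong \big(\prod_i G_i/\Phi(G_i)\big)/\bar K$, where $\bar K$ is the image of the amalgamating subgroup $K \leq \prod_i \ZZ(G_i)$. The key claim is that $\bar K = 0$: the image of $\ZZ(G_i)$ in $G_i/\Phi(G_i)$ is trivial whenever $\ZZ(G_i) \subseteq \Phi(G_i)$, which holds for every factor except a cyclic factor $Q_\alpha(n_\alpha,0) \cong C_{p^{n_\alpha}}$, present exactly when $r_\alpha = 0$; and even in that case, since this factor then uniquely realizes the maximal center $p^\zeta = p^{n_\alpha}$, the defining relation of $K$ forces the $Q_\alpha$-component of every element of $K$ to lie in $\Phi(Q_\alpha) = \langle a_\alpha^p \rangle$. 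Hence $d(G) = \sum_i d(G_i)$, which is $2(\alpha+\beta)$ when all factors are $2$-generated, and $2(\alpha+\beta) - 1$ precisely when $r_\alpha = 0$, the cyclic factor $Q_\alpha$ then contributing a single generator.
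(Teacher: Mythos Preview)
Your proposal is correct and follows exactly the line the paper indicates: the paper gives no detailed proof at all, stating only that the lemma is ``an immediate consequence of the definition of central product and conditions \eqref{ConditionsClassification} and Lemma 2.4''. You have simply written out the details behind that sentence, including the one genuinely non-trivial point---item (5)---where your argument that $\bar K=0$ (using that when $r_\alpha=0$ the factor $Q_\alpha$ uniquely carries the maximal center, so the $Q_\alpha$-component of any element of $K$ lands in $\ZZ(Q_\alpha)^p=\Phi(Q_\alpha)$) is exactly what the reader is implicitly expected to supply.
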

Similarly, using now condition \eqref{ConditionsClassification2b}  and Lemma 2.5:
\begin{lemma}\label{GeneralRemark2}
	Let $G$ be the group \eqref{GroupClassification2b} with parameters satisfying condition \eqref{ConditionsClassification2b}. Then:
	\begin{enumerate}
		\item \label{ExponentGeneral2}   $\exp(G)=p^{n+1}$.
		\item\label{CenterGeneral2} 
		$  \mathcal Z(G)\cong C_{p^n}.$ 
		\item\label{DerivedGeneral1}  
		$ G'\cong C_{p^n} .$ 
		\item \label{QCenterGeneral2}
		There is an isomorphism   $$\frac{G}{\mathcal Z(G)}\cong  \frac{R(n)}{\ZZ(R(n))}\times  \frac{Q_1(\ell_1,\ell_1)}{\ZZ(Q_1(\ell_1,\ell_1))}  \times \dots \times \frac{Q_{ \beta}(\ell_\beta,\ell_\beta )}{\ZZ(Q_{ \beta}(\ell_\beta,\ell_\beta ))} \cong   C_{p^{n}}^2 \times \prod_{i=1}^\beta C_{p^{\ell_i}}^2. $$ 
		%
		%
		%
		\item The minimal number of generators of $G$ is $2(\alpha+\beta)$.  
	\end{enumerate}
\end{lemma}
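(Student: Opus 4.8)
The plan is to read off all five items directly from the two defining features of the central product $G=R(n)*Q_1(\ell_1,\ell_1)*\dots*Q_\beta(\ell_\beta,\ell_\beta)$, exactly as in the proof of Lemma~\ref{GeneralRemark}: by the construction of the central product the factors $R(n),Q_1,\dots,Q_\beta$ commute elementwise in $G$, and their centers are amalgamated inside a single cyclic group. I would open by recording that, since condition \eqref{ConditionsClassification2b} gives $n\geq\ell_1\geq\dots\geq\ell_\beta$, while Lemma 2.5 yields $\mathcal Z(R(n))=R(n)'\cong C_{p^n}$ and Lemma 2.4 yields $\mathcal Z(Q_i(\ell_i,\ell_i))=Q_i'\cong C_{p^{\ell_i}}$, the largest factor-center is $\mathcal Z(R(n))$; hence the amalgamation identifies each $\mathcal Z(Q_i)$ with the subgroup of order $p^{\ell_i}$ of $\mathcal Z(R(n))$. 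This single observation governs the whole statement.

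For item (1), a general element is a commuting product $x_0x_1\cdots x_\beta$ with $x_0\in R(n)$ and $x_i\in Q_i$, so $m$-th powers distribute and $\exp(G)$ divides the least common multiple of the factor-exponents; by Lemma 2.5(1) and Lemma 2.4(1) these are $p^{n+1}$ and $p^{\ell_i+1}$, whose lcm is $p^{n+1}$ since $n\geq\ell_i$, and $R(n)\leq G$ gives the reverse divisibility. For items (2) and (3), an element is central iff each of its components is central in its own factor, so $\mathcal Z(G)=\mathcal Z(R(n))\mathcal Z(Q_1)\cdots\mathcal Z(Q_\beta)$, and $[R(n),Q_i]=[Q_i,Q_j]=1$ gives $G'=R(n)'Q_1'\cdots Q_\beta'$; by the opening observation both products collapse to $\mathcal Z(R(n))=R(n)'\cong C_{p^n}$. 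In particular this shows $G'=\mathcal Z(G)$, a coincidence special to this family and rooted in the identity $R(n)'=\mathcal Z(R(n))$ of Lemma 2.5(2).

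For item (4), writing $G=(R(n)\times\prod_iQ_i)/K$ with $K$ the amalgamation kernel, the preimage of $\mathcal Z(G)$ is precisely $\mathcal Z(R(n))\times\prod_i\mathcal Z(Q_i)$ (which contains the central $K$), so $G/\mathcal Z(G)$ splits as the external direct product of $R(n)/\mathcal Z(R(n))$ with the $Q_i/\mathcal Z(Q_i)$; Lemma 2.5(2)-(3) evaluate the first factor as $C_{p^n}^2$ and Lemma 2.4(5) the others as $C_{p^{\ell_i}}^2$. For item (5), I would use $d(G)=d(G/G')$ (valid since $\Phi(G)/G'=\Phi(G/G')$ for a $p$-group) together with the equality $G'=\mathcal Z(G)$ from items (2)-(3); item (4) then exhibits $G/G'$ as an abelian group of rank $2(\beta+1)$, which is the stated $2(\alpha+\beta)$ with the single $R(n)$ factor playing the role of the one $\alpha$-term.

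I do not expect a genuine obstacle: the entire lemma is a mechanical unwinding of the central-product structure against Lemmas 2.4 and 2.5, mirroring Lemma~\ref{GeneralRemark}. The one point I would treat carefully is the bookkeeping of the amalgamation in items (2)-(4)—verifying that every $\mathcal Z(Q_i)$ (equivalently every $Q_i'$) is identified inside $\mathcal Z(R(n))$, so that no central or derived contribution escapes it, and that the resulting coincidence $G'=\mathcal Z(G)$ is exactly what powers the generator count in item (5).
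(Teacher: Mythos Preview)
Your proposal is correct and follows exactly the approach the paper indicates: the paper gives no detailed proof at all, merely writing ``Similarly, using now condition \eqref{ConditionsClassification2b} and Lemma 2.5,'' so your unwinding of the central-product structure against Lemmas 2.4 and 2.5 is precisely the intended argument spelled out in full. The one cosmetic point is item (5), where the paper's ``$2(\alpha+\beta)$'' is a leftover from Lemma~\ref{GeneralRemark} and should read $2(\beta+1)$; your interpretation that the $R(n)$ factor accounts for the extra pair of generators is the correct reading.
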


\begin{remark}\label{rem:riseparados}
	Let $G$ be the group \eqref{GroupClassification} with the parameters satisfying \eqref{ConditionsClassification} and let $1<i\leq \alpha$.	Then $r_{i-1}\neq r_i+1$. Indeed, otherwise by \eqref{ConditionsClassification} one would have that $n_{i-1}-r_i-1=n_{i-1}-r_{i-1}< n_i-r_i$, and hence the contradiction $n_i \leq n_{i-1}-1< n_i$. 
\end{remark}

\section{Proof of Theorem~\ref{TheoremMIPCyclicCentreOdd}: the groups of odd order}

Unless stated explicitly otherwise $G$ denotes a $p$-group of nilpotency class at most $2$ with cyclic center. Hence, the isomorphism type of $G$ is described in Theorems~\ref{TheoremClassification} and \ref{TheoremClassification2} and we associate to $G$ the parameters from these theorems. In particular $G$ contains elements $a_i$ and $b_i$ for $1 \leq i \leq \alpha +\beta$. Until stated otherwise, $F$ will be an arbitrary field of characteristic $p$. As the properties of being of class $2$ and having a cyclic center are determined, then any other group $H$ with $FG\cong FH$ will also be of nilpotency class $2$ with cyclic center. Thus, due to Theorems~\ref{TheoremClassification} and \ref{TheoremClassification2}, it suffices to show
that
\begin{itemize}
\item[(1)] whether $G$ is of the form \eqref{GroupClassification} or of the form  \eqref{GroupClassification2b} is determined,
\item[(2)] all the parameters occurring in the corresponding presentation are determined.
\end{itemize} 

As the ideal $I(F\ZZ(G))FG$ is canonical, $FG/I(F\ZZ(G))FG\cong F[G/\ZZ(G)]$ and $G/\ZZ(G)$ is abelian, by the classical result of Deskins \cite{Deskins1956} the isomorphism type of $G/\ZZ(G)$ is determined. This is also proved in \cite[6.23 Theorem]{Sandling85}. By \Cref{GeneralRemark} and \Cref{GeneralRemark2}, we have that $$G/\ZZ(G)=\begin{cases}
	\prod_{i=1}^\alpha C_{p^{r_i}}^2\times \prod_{j=1}^\beta C_{p^{\ell_i}}^2, & \text{if $G$ is of the form \eqref{GroupClassification}};  \\
	C_{p^n}^2\times \prod_{j=1}^\beta C_{p^{\ell_j}}^2,&\text{if $G$ is of the form \eqref{GroupClassification2b}}.
\end{cases}$$
Thus the multiset 
$$L^G=\begin{cases}
	\{r_1,\dots, r_\alpha,\ell_1,\dots, \ell_\beta\}, & \text{if $G$ is of the form \eqref{GroupClassification}}; \\
	\{n,\ell_1,\dots, \ell_\beta\},&\text{if $G$ is of the form \eqref{GroupClassification2b}}
\end{cases}$$
is determined (by multiset we mean a set with multiplicities taken into account, sometimes called an unordered tuple). When we work only with one fixed group $G$ we simply write $L$ for $L^G$.


\begin{lemma}\label{lemma:JenningsCapCenterDet}
	For each $s\geq 1$, the isomorphism type of $\M_s(G)\cap \ZZ(G)$ is determined. 
\end{lemma}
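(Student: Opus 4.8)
The plan is to reduce the determination of the isomorphism type of $\M_s(G)\cap\ZZ(G)$ to the determination of a single integer, its order, and then to express that order as a quotient of two quantities that are already known to be determined. The starting observation is that, since $\ZZ(G)$ is cyclic by hypothesis (and this property is determined), every subgroup of $\ZZ(G)$ is cyclic; in particular $\M_s(G)\cap\ZZ(G)$ is cyclic. A cyclic group is determined up to isomorphism by its order, so it suffices to prove that $|\M_s(G)\cap\ZZ(G)|$ is determined.

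First I would record that $|\M_s(G)|$ is determined. By Jennings' theorem \cite{Jen41} the isomorphism type of each quotient $\M_t(G)/\M_{t+1}(G)$ is determined, and since $G$ is a finite $p$-group the Jennings series reaches the trivial subgroup, so $|\M_s(G)|=\prod_{t\geq s}|\M_t(G)/\M_{t+1}(G)|$ is a finite product of determined factors and is therefore itself determined. Next I would pass to the quotient $G/\ZZ(G)$. Its isomorphism type is determined (it is abelian, by Deskins' theorem \cite{Deskins1956}, as already used before the lemma), so $|\M_s(G/\ZZ(G))|$ is determined as well. The crucial structural input linking these two is the functoriality of the Jennings series under quotients: directly from the product formula \eqref{eq:JenningsSeries}, using that $\gamma_i(G)\ZZ(G)/\ZZ(G)=\gamma_i(G/\ZZ(G))$ and that taking $p^j$-th powers commutes with passing to the quotient, one obtains
\[
\M_s(G/\ZZ(G))=\M_s(G)\ZZ(G)/\ZZ(G).
\]

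Combining these observations via the second isomorphism theorem gives
\[
|\M_s(G/\ZZ(G))|=\frac{|\M_s(G)|}{|\M_s(G)\cap\ZZ(G)|},
\qquad\text{hence}\qquad
|\M_s(G)\cap\ZZ(G)|=\frac{|\M_s(G)|}{|\M_s(G/\ZZ(G))|}.
\]
The right-hand side is a quotient of two determined integers, so $|\M_s(G)\cap\ZZ(G)|$ is determined, and with it the isomorphism type of the cyclic group $\M_s(G)\cap\ZZ(G)$, as desired. I expect the only point requiring any care to be the functoriality identity $\M_s(G/\ZZ(G))=\M_s(G)\ZZ(G)/\ZZ(G)$; but this is immediate from the explicit description \eqref{eq:JenningsSeries} of the Jennings series, so it is not a genuine obstacle, and the remainder of the argument is bookkeeping with group orders.
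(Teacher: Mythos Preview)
Your argument is correct and takes a genuinely different route from the paper. Both proofs begin with the same reduction: since $\ZZ(G)$ is cyclic, it suffices to determine the order of $\M_s(G)\cap\ZZ(G)$. From there the paths diverge. The paper works inside $FG$ with canonical ideals: it observes that $(I(\ZZ(G))FG)^{p^t}=I(\ZZ(G)^{p^t})FG$ is canonical, shows that $I(\ZZ(G)^{p^t})FG\subseteq I(G)^s$ if and only if $\ZZ(G)^{p^t}\subseteq \M_s(G)$, and lets $t(s)$ be the least $t$ for which this inclusion holds; then $\M_s(G)\cap\ZZ(G)=\ZZ(G)^{p^{t(s)}}$ with $t(s)$ determined. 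Your approach stays on the group-theoretic side: using the functoriality $\M_s(G/\ZZ(G))=\M_s(G)\ZZ(G)/\ZZ(G)$ and the second isomorphism theorem, you express $|\M_s(G)\cap\ZZ(G)|$ as $|\M_s(G)|/|\M_s(G/\ZZ(G))|$, both factors being determined (the first by Jennings, the second because $G/\ZZ(G)$ is abelian and its isomorphism type is determined). Your argument is shorter and more conceptual, leaning only on facts already invoked in the paper; the paper's argument has the mild advantage of explicitly locating $\M_s(G)\cap\ZZ(G)$ as a power of $\ZZ(G)$ via canonical ideals, and of not requiring the determination of $G/\ZZ(G)$ as a group (only the canonicity of the relevant ideals), though in the present context this extra generality is not needed.
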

\begin{proof}
Since $\ZZ(G)$ is cyclic, it suffices to show that the order of $\M_s(G)\cap \ZZ(G)$ is determined.  
The ideals $I(G)^s$ and $I(\ZZ(G))FG$ are canonical by the discussion above. Moreover for each $t\geq 0$,  as  $\ZZ(G)$ is cyclic,  $I(\ZZ(G)^{p^{t}})FG  =(I(\ZZ(G))FG)^{p^{t}} $ is also canonical. Observe that 
$$I(\ZZ(G)^{p^{t}})FG \subseteq  I(G)^s \quad \text{if and only if}\quad \ZZ(G)^{p^t}\subseteq \M_s(G). $$
Indeed, if $I(\ZZ(G)^{p^{t}})FG \subseteq  I(G)^s $, taking intersections with $G-1$ we obtain that $\ZZ(G)^{p^t}-1\subseteq \M_s(G)-1$. Conversely, as $I(\mathcal Z(G)^{p^t})FG$ is the ideal generated by the elements of the form $g-1$ with $g\in \mathcal Z(G)^{p^t}$, if $\mathcal Z(G)^{p^t}-1\subseteq \M_s(G)-1 \subseteq I(G)^s$, the inclusion of ideals follows. Hence 
$$\M_s(G)\cap \ZZ(G)=\ZZ(G)^{p^{t(s)}}, $$
where $t(s)$ is the minimum integer such that $I(F\ZZ(G)^{p^{t(s)}})\subseteq I(G)^s$. Thus $t(s)$ is determined. Since the order of $\ZZ(G)$ is determined, it follows that $|\M_s(G) \cap \ZZ( G)|=|\ZZ(G)^{t(s)}|=|\ZZ(G)|/p^{t(s)}$ is determined.
\end{proof}

 
 We denote 
 \[\zeta=\log_p|\ZZ(G)| \ \ \text{and} \ \ \delta=\log_p|G'|.\] 
  For each integer $t>0$  we define $\i(t)$ to be the unique index verifying  
  \[r_{\i(t )}>t\geq r_{\i(t)+1}\]
   and  $\j(t)$ to be the unique index satisfying 
   \[\ell_{\j(t)} > t \geq \ell_{\j(t)+1},\]
    where we use the conventions 
  \[r_{\alpha+1}=\ell_{\beta+1}=0, \ r_0=\ell_0=+\infty, \  n_{\alpha+1}=\zeta \ \ \text{and} \ \ n_0=0.\]
   With this conventions, $r_\alpha=+\infty$ if and only if $\alpha=0$. 
 If $G$ is of the form \eqref{GroupClassification2b} we understand $\alpha=0$.  The expression \eqref{eq:JenningsSeries} for the Jennings series takes the form
  \begin{equation*}\label{EqJennings}
  \M_{s}(G) =\prod_{ip^j\geq s} \gamma_i(G)^{p^j}= G^{p^{\lceil \log_p s\rceil }} (G')^{p^{\lceil \log_p(s/2) \rceil}},
  \end{equation*}
  where $\lceil r\rceil$ denotes the smallest integer greater than or equal to $r$, for each $r\in \R$.  Then we derive that 
  \begin{equation}\label{JenningsSeriesCasosUtiles}
  	D_{p^t}(G) =G^{p^t}.
  \end{equation} 
 For $p>2$ this is immediate; for $p=2$ it suffices to consider that $G = \langle [g,h] \rangle$ for some $g,h \in G$, as $G$ has class $2$, and that by \Cref{LemmaPowers} we have $[h,g] = h^{-2}g^{-2}(gh)^2$, i.e. $G' \leq G^2$. It is also easy to see that
 \begin{equation}\label{JenningsSeriesCasoMasUno}
 D_{p^t+1}(G)=G^{p^{t+1}} (G')^{p^t} 
 \end{equation}

We introduce the following notation from now on. Let $$\varepsilon = \begin{cases} 0, \ \ \text{if} \ \ p \neq 2, \\ 1, \ \ \text{if} \ \ p = 2. \end{cases}$$ For a positive integer $t$ we set $$
\mu(t) =   \min\{ \zeta+r_{\i(t)} -n_{\i(t)}, \zeta+ t-n_{\i(t)+1}\}, \qand
\nu(t)= \min\{\mu(t), \zeta+t-\delta-\varepsilon\}.$$

\begin{lemma}\label{PreLemmaJenningsCapCenter}
    Let $t$ be a positive integer. 
     Then
    \begin{equation*}
    G^{p^t}\cap \ZZ(G)=    \ZZ(G)^{p^{\nu(t)}}.
    \end{equation*}
If moreover $t\geq r_\alpha$, then
    \begin{equation*}
        G^{p^t}\cap \ZZ(G)=
            \ZZ(G)^{p^{\mu(t)}} .  
    \end{equation*}  

\end{lemma}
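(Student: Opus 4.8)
The plan is to reduce the computation to the individual central factors and then assemble the answer from the monotonicity built into \eqref{ConditionsClassification}. Write $G=Q_1*\dots*Q_{\alpha+\beta}$ as in \eqref{GroupClassification}. Since elements of distinct factors commute, \Cref{LemmaPowers} gives $(q_iq_j)^{p^t}=q_i^{p^t}q_j^{p^t}$ whenever $q_i\in Q_i$, $q_j\in Q_j$ with $i\neq j$, so that $G^{p^t}=\prod_i Q_i^{p^t}$. Using the direct decomposition $G/\ZZ(G)\cong\prod_i Q_i/\ZZ(Q_i)$ of \Cref{GeneralRemark} together with $\ZZ(G)\cap Q_i=\ZZ(Q_i)$, an element $\prod_i x_i$ with $x_i\in Q_i^{p^t}$ lies in $\ZZ(G)$ if and only if each $x_i\in\ZZ(Q_i)$. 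Hence
\begin{equation*}
G^{p^t}\cap\ZZ(G)=\prod_{i=1}^{\alpha+\beta}\left(Q_i^{p^t}\cap\ZZ(Q_i)\right),
\end{equation*}
and, $\ZZ(G)$ being cyclic, this product equals the largest subgroup occurring, i.e. $\ZZ(G)^{p^{k}}$ with $k=\min_i k_i$ whenever $Q_i^{p^t}\cap\ZZ(Q_i)=\ZZ(G)^{p^{k_i}}$.

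Next I would compute each $k_i$ from the presentation \eqref{eq:DefQnr}. For a single factor $Q=Q(n,r)$, \Cref{LemmaPowers} shows that $g\mapsto g^{p^t}$ becomes a homomorphism after reducing modulo $(G')^{p^{t-\varepsilon}}$: the coefficient $\binom{p^t}{2}$ of the commutator is divisible by $p^t$ when $p$ is odd and exactly by $2^{t-1}$ when $p=2$, which is precisely the origin of $\varepsilon$. Together with $(G')^{p^{t-\varepsilon}}\subseteq Q^{p^t}$ (a direct consequence of \eqref{JenningsSeriesCasosUtiles} and the Jennings formula \eqref{eq:JenningsSeries}) this yields $Q^{p^t}=\langle a^{p^t},b^{p^t}\rangle (G')^{p^{t-\varepsilon}}$. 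A short centrality check then gives $Q^{p^t}\cap\ZZ(Q)=\langle a^{p^{\max(t,r)}}\rangle (G')^{p^{t-\varepsilon}}$: the element $b$ has order $p^r$ and $a^{p^s}$ first becomes central at $s=r$, so $b$ and all mixed terms contribute nothing new to the centre. Translating orders into powers of a generator of $\ZZ(G)$ --- using that $a^{p^r}$ is central of order $p^{n-r}$ and that $G'$ has order $p^{r}$ --- the factor $Q_i$ with $i\leq\alpha$ contributes the exponent $k_i=\min(\zeta+t-r_i-\varepsilon,\, A_i)$, where $A_i=\zeta+t-n_i$ if $r_i\leq t$ and $A_i=\zeta+r_i-n_i$ if $r_i>t$, while each factor $Q_{\alpha+j}=Q(\ell_j,\ell_j)$ satisfies $a^{p^{\ell_j}}=1$ and hence contributes only $k_{\alpha+j}=\zeta+t-\ell_j-\varepsilon$.

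Taking the minimum over all factors and simplifying then yields the first formula. Splitting the indices $i\leq\alpha$ according to whether $r_i\leq t$ (those are the $i\geq\i(t)+1$) or $r_i>t$ (those are the $i\leq\i(t)$), and using that $n_i$ is strictly decreasing while $n_i-r_i$ is strictly increasing, the two families of $A_i$-terms have minima $\zeta+t-n_{\i(t)+1}$ and $\zeta+r_{\i(t)}-n_{\i(t)}$, whose minimum is $\mu(t)$; meanwhile the commutator terms $\zeta+t-r_i-\varepsilon$ and $\zeta+t-\ell_j-\varepsilon$ have overall minimum $\zeta+t-\delta-\varepsilon$ because $\delta=\max(r_1,\ell_1)$. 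This gives $\nu(t)=\min(\mu(t),\zeta+t-\delta-\varepsilon)$, as claimed; the groups of the form \eqref{GroupClassification2b} are treated identically, the conventions forcing $\mu(t)=t$ and the $R(n)$-factor supplying the dominant commutator term while its $a$-power contribution is checked not to lower the minimum.

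For the second statement, suppose $t\geq r_\alpha$. Then the defining inequality $r_{\i(t)}>t$ forces $\i(t)\leq\alpha-1$, so $\i(t)+1\leq\alpha$ and $n_{\i(t)+1}\geq n_\alpha$. Since $n_\alpha>r_1$ and $n_\alpha>\ell_1$ in \eqref{ConditionsClassification}, we have $n_\alpha>\delta$, hence $n_{\i(t)+1}\geq n_\alpha\geq\delta+1\geq\delta+\varepsilon$; therefore $\zeta+t-\delta-\varepsilon\geq\zeta+t-n_{\i(t)+1}\geq\mu(t)$, so the commutator term is not binding and $\nu(t)=\mu(t)$. I expect the main difficulty to lie in the factorwise computation of $Q^{p^t}\cap\ZZ(Q)$ --- in particular in pinning down the $\varepsilon$-correction caused by the $p=2$ behaviour of the power map and in verifying that $b$ and the mixed products contribute no new central elements; once this is in place, the two stated formulas follow from the monotonicity and separation properties of the parameters in \eqref{ConditionsClassification}.
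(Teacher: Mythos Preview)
Your proof is correct and follows essentially the same strategy as the paper: both compute the central contribution from each generator $a_i$ (equivalently, each factor $Q_i$), use the monotonicity of the sequences $(n_i)$ and $(n_i-r_i)$ to identify the minimum as $\mu(t)$, and extract the extra commutator contribution $\zeta+t-\delta-\varepsilon$ from the power $(a_ib_i)^{p^t}$ via \Cref{LemmaPowers}. Your factor-by-factor decomposition $G^{p^t}\cap\ZZ(G)=\prod_i(Q_i^{p^t}\cap\ZZ(Q_i))$ is a tidy way to package what the paper does directly with the elements $a_i^{p^t}$ and $(a_ib_i)^{p^t}$, but the content of the two arguments is the same.
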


\begin{proof}  For $1\leq i \leq \alpha  $, let $e_i$ be the unique integer satisfying   $0\leq e_i\leq \zeta$  and
    \begin{equation*}\label{def:ei}
        \GEN{a_i}\cap \mathcal Z(G)= \ZZ(G)^{p^{e_i}} .
    \end{equation*}
    As $\GEN{a_i}\cap\mathcal Z(G)=\GEN{a_i^{p^{r_i}}}$, by  taking orders it is clear that
    $ e_i   =\zeta-n_i+r_i. $
    Note that for each $t\geq 0$ we have that
    $$ \GEN{a_i^{p^t}}\cap \mathcal Z(G)=  \ZZ(G)^{p^{\max(e_i, e_i -n_i+t)}}  =  \ZZ(G)^{p^{\max(\zeta-n_i+r_i, \zeta-n_i+t)}}  =\ZZ(G)^{p^{\zeta-n_i+\max(r_i , t)   }}  .$$
    Therefore $$\GEN{a_1^{p^t},\dots, a_\alpha^{p^t}}\cap \mathcal Z(G)=\ZZ(G)^{p^{\zeta+   \min_{i=1,\dots,\alpha} \left( \max(r_i,t)-n_i  \right)}} . $$
    Since $r_{\i(t)  }>t \geq r_{\i(t)+1}$ and
    $$r_1-n_1 > \dots> r_{\i(t   )   .
    }-n_{\i(t) }>t -n_{\i(t)}  < t-n_{\i(t)+1} < \dots <t- n _\alpha $$
    we have that
    $$\GEN{a_1^{p^t},\dots, a_\alpha^{p^t}}\cap \mathcal Z(G)=\ZZ(G)^{p^{\zeta+   \min  (r_{\i(t)} -n_{\i(t)}, t-n_{\i(t)+1})}} . $$
    Moreover, observe that  $\GEN{b_i}\cap \mathcal Z(G)=\GEN{a_j}\cap\mathcal Z(G)  =1$ for $1 \leq  i \leq \alpha+\beta$ and $1+\alpha\leq j \leq \alpha+\beta$, so that when we take into account an $a_i$ in the intersection $G^{p^t} \cap \ZZ(G)$ we do not need to consider the corresponding $b_i$ and neither do we need to consider the $a_j$ for $1+\alpha \leq j \leq \alpha + \beta$. Moreover, let $i=1$ or $i=\alpha+1$, so that the order of $[b_i,a_i]$ equals $\delta$. Then $(a_ib_i)^{p^t} = a_i^{p^t}b_i^{p^t}[b_i,a_i]^{\frac{p^t(p^t-1)}{2}}$ by Lemma~\ref{LemmaPowers}. So $[b_i,a_i]^{\frac{p^t(p^t-1)}{2}}$ is an element of $G^{p^t} \cap \ZZ(G)$ which has order $p^{\delta-t+\varepsilon}$. 	
     This yields
    \begin{equation*}
        G^{p^{t}}\cap \mathcal Z(G)= \ZZ(G)^{p^{\zeta+\min \{r_{\i(t)} -n_{\i(t)}, t-n_{\i(t)+1}, t-\delta-\varepsilon \}}} = \ZZ(G)^{p^{\nu(t)}},
    \end{equation*}
    as desired. Now assume also that $t\geq r_\alpha$. Then $\i(t)<\alpha$ and hence $\alpha_{\i(t)+1} \geq n_\alpha$. Recall from Lemma~\ref{GeneralRemark} and the relations between the parameters that $\delta = \max\{r_1,\ell_1\} < n_\alpha$. Hence,
    $$\zeta+ \min   \{  r_{\i(t)} -n_{\i(t)}, t-n_{\i(t)+1} \} \leq \zeta+t-n_{\i(t)+1} < \zeta+ t-\delta $$  
     and the result follows as then $\mu(t) = \nu(t)$.
\end{proof}

\begin{lemma}\label{LemmaJenningsCapCenter} Let $t$ be a positive integer. 
Moreover, set $\tilde \nu(t)= \min\{ \mu(t+1),  \zeta +t-\delta\}$.
     Then
        \begin{equation*}
        \M_{p^t}(G)\cap \ZZ(G)=    \ZZ(G)^{p^{\nu(t)}}
    \end{equation*}
    and
    \begin{equation*}
        \M_{p^t+1}(G)\cap \ZZ(G)=\ZZ(G)^{p^{\tilde \nu (t)}}.
    \end{equation*}
If moreover  $t+1\geq r_\alpha$, then
$\M_{p^{t}+1}(G)\cap \ZZ(G)=\M_{p^{t+1}}(G)\cap \ZZ(G)$.
\end{lemma}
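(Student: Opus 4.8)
The plan is to deduce both identities directly from Lemma~\ref{PreLemmaJenningsCapCenter} together with the two explicit descriptions \eqref{JenningsSeriesCasosUtiles} and \eqref{JenningsSeriesCasoMasUno} of the relevant Jennings terms, and then to match exponents inside the cyclic group $\ZZ(G)$. For the first identity there is nothing to do beyond invoking \eqref{JenningsSeriesCasosUtiles}: since $\M_{p^t}(G)=G^{p^t}$, the equality $\M_{p^t}(G)\cap\ZZ(G)=\ZZ(G)^{p^{\nu(t)}}$ is exactly the first assertion of Lemma~\ref{PreLemmaJenningsCapCenter}.

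For the second identity I would start from $\M_{p^t+1}(G)=G^{p^{t+1}}(G')^{p^t}$, which is \eqref{JenningsSeriesCasoMasUno}. Because $G$ has class $2$ we have $G'\leq\ZZ(G)$, so $(G')^{p^t}$ is a central subgroup, and Dedekind's modular law gives
\[
\bigl(G^{p^{t+1}}(G')^{p^t}\bigr)\cap\ZZ(G)=\bigl(G^{p^{t+1}}\cap\ZZ(G)\bigr)(G')^{p^t}.
\]
Applying Lemma~\ref{PreLemmaJenningsCapCenter} to $t+1$ identifies the first factor as $\ZZ(G)^{p^{\nu(t+1)}}$; and since $\ZZ(G)$ is cyclic and $G'$ is its unique subgroup of order $p^\delta$, we have $G'=\ZZ(G)^{p^{\zeta-\delta}}$ and therefore $(G')^{p^t}=\ZZ(G)^{p^{\zeta-\delta+t}}$. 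As products and intersections of subgroups of a cyclic $p$-group correspond to the minimum and maximum of exponents (with the degenerate convention that an exponent exceeding $\zeta$ yields the trivial subgroup), the right-hand side equals $\ZZ(G)^{p^{\min\{\nu(t+1),\,\zeta+t-\delta\}}}$. It then remains to unwind $\nu(t+1)=\min\{\mu(t+1),\,\zeta+t+1-\delta-\varepsilon\}$ and to observe that $\min\{\zeta+t+1-\delta-\varepsilon,\,\zeta+t-\delta\}=\zeta+t-\delta$ in both cases $\varepsilon=0$ and $\varepsilon=1$; hence $\min\{\nu(t+1),\,\zeta+t-\delta\}=\min\{\mu(t+1),\,\zeta+t-\delta\}=\tilde\nu(t)$, which is what is claimed.

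For the final statement I would assume $t+1\geq r_\alpha$ and apply the second assertion of Lemma~\ref{PreLemmaJenningsCapCenter} to $t+1$, which yields $\nu(t+1)=\mu(t+1)$; moreover its proof establishes the strict inequality $\mu(t+1)<\zeta+(t+1)-\delta$, which for integers forces $\mu(t+1)\leq\zeta+t-\delta$. Consequently $\tilde\nu(t)=\min\{\mu(t+1),\,\zeta+t-\delta\}=\mu(t+1)=\nu(t+1)$, and comparing with the first identity applied to $t+1$ gives $\M_{p^{t+1}}(G)\cap\ZZ(G)=\ZZ(G)^{p^{\nu(t+1)}}=\ZZ(G)^{p^{\tilde\nu(t)}}=\M_{p^t+1}(G)\cap\ZZ(G)$.

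The genuinely routine parts are the two case-checks on $\varepsilon$ and the identification $G'=\ZZ(G)^{p^{\zeta-\delta}}$. The steps demanding care are the application of the modular law, which is valid precisely because $(G')^{p^t}$ is central—this is the one place where the class $2$ hypothesis is used—and the bookkeeping that translates products and intersections in the cyclic center into minima and maxima of exponents, including the degenerate cases where a subgroup collapses to the identity. I expect the $\varepsilon$-matching in the second identity to be the subtlest conceptual point, since it is exactly there that the characteristic-$2$ correction built into the definition of $\nu$ gets absorbed by the extra factor $(G')^{p^t}$.
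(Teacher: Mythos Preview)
Your proof is correct and follows essentially the same approach as the paper: both arguments invoke \eqref{JenningsSeriesCasosUtiles} for the first identity, use the modular law together with $G'=\ZZ(G)^{p^{\zeta-\delta}}$ and Lemma~\ref{PreLemmaJenningsCapCenter} for the second, and establish the final equality by showing $\mu(t+1)\leq \zeta+t-\delta$ from the inequality chain in the proof of Lemma~\ref{PreLemmaJenningsCapCenter}. The only cosmetic difference is that you route the second identity through $\nu(t+1)$ and then absorb the $\varepsilon$-term, whereas the paper writes the three-term minimum directly; the logic is identical.
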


\begin{proof} The first identity is obvious from \Cref{PreLemmaJenningsCapCenter} and \eqref{JenningsSeriesCasosUtiles}.
    For the second one, recall that if $N,H$ and $C$ are normal subgroups of $G$, with $H\leq C$, then $(NH)\cap C=(N\cap C)H$. In particular, by \eqref{JenningsSeriesCasoMasUno} we obtain
    \begin{align*}
        \M_{p^t+1}(G)\cap \ZZ(G)&=(G^{p^{t+1}} (G')^{p^t}) \cap \ZZ(G) = (G^{p^{t+1}} \cap \ZZ(G)) (G')^{p^t} \nonumber \\
        &=     \ZZ(G)^{p^{\ \min\{ \zeta+r_{\i(t+1)} -n_{\i(t+1)}, \zeta+ t+1-n_{\i(t+1)+1},  \zeta -\delta +t\}}} = \ZZ(G)^{p^{\tilde \nu(t)}}
    \end{align*}
by \Cref{PreLemmaJenningsCapCenter} and using that $G'=\ZZ(G)^{p^{\zeta-\delta}}$. 
Note that there is no difference here between the case of $p$ being odd or even, since the biggest order of an element of $G'$ which also lies in $G^{p^{t+1}}$ is $p^{\delta-(t+1)+\epsilon}$ which is smaller than or equal to $p^{\delta-t}$ in any case.

Assume now that $t+1\geq r_\alpha$. Then $\i(t+1)<\alpha$
and so $n_{\i(t+1)+1} \geq n_\alpha$. By the definition of $\mu(t+1)$ we trivially get $\mu(t+1) \leq \zeta+t+1 -n_{\i(t+1)+1}$, so by the previous inequality $\mu(t+1) \leq \zeta+t+1 - n_\alpha$. As $n_\alpha > \delta$ gives $\zeta+t+1 - n_\alpha \leq \zeta+t -\delta$, this implies $\tilde \nu (t)= \min\{\mu(t+1), \zeta+t-\delta \} = \mu(t+1)$. 
On the other hand by the statement of the lemma already proven in the first paragraph $\M_{p^{t+1}}(G) \cap \ZZ(G) = \ZZ(G)^{p^{\nu(t+1)}}$. As we have observed $\mu(t+1) \leq \zeta+t+1 - n_\alpha \leq \zeta + t - \delta$ already before, so $ {\nu}(t+1) = \mu(t+1) = \tilde \nu(t)$ follows and hence $\M_{p^{t}+1}(G)\cap \ZZ(G)=\M_{p^{t+1}}(G)\cap \ZZ(G)$ holds in this case.
\end{proof}

We will need the following general observation which is probably well known. A similar result can be found for instance in \cite[Lemma 1.15]{SalimSandlingMaximal}. 
 
\begin{lemma}\label{lem:FrobeniusMap}
Let $G$ be any finite $p$-group of nilpotency class at most $p-1$. Then the map
\[f: I(G)/I(G)^2 \rightarrow I(G)^p/I(G)^{p+1}, \ \ x \mapsto x^p  \]
is a homomorphism of rings.
\end{lemma}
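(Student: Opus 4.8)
The plan is to reduce everything to the single representative-level congruence
\[
(x+y)^p \equiv x^p + y^p \pmod{I(G)^{p+1}} \qquad (x,y\in I(G)),
\]
after which the statement becomes formal. First I would note that both $I(G)/I(G)^2$ and $I(G)^p/I(G)^{p+1}$ are square-zero (non-unital) rings, since $I(G)\cdot I(G)=I(G)^2$ and $I(G)^p\cdot I(G)^p=I(G)^{2p}\subseteq I(G)^{p+1}$. Hence a ring homomorphism between them is exactly a homomorphism of additive groups, the multiplicative condition being automatic. The displayed congruence delivers both the well-definedness of $f$ (taking $y\in I(G)^2$: then $y^p\in I(G)^{2p}\subseteq I(G)^{p+1}$, so $x^p \bmod I(G)^{p+1}$ does not depend on the representative modulo $I(G)^2$) and the additivity of the induced map, so it is the whole content.

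For the congruence I would invoke Jacobson's formula, valid in any associative algebra over a field of characteristic $p$:
\[
(x+y)^p = x^p + y^p + \sum_{i=1}^{p-1} s_i(x,y),
\]
where $i\,s_i(x,y)$ is the coefficient of $t^{i-1}$ in $\operatorname{ad}(tx+y)^{p-1}(x)$, with $\operatorname{ad}(u)(v)=uv-vu$. Thus each $s_i(x,y)$ is an $\F_p$-linear combination of iterated associative commutators of weight $p$, i.e. of the form $\operatorname{ad}(z_1)\cdots\operatorname{ad}(z_{p-1})(z_p)$ with each $z_j\in\{x,y\}$. Since $I(G)$ is $F$-spanned by the elements $g-1$ with $g\in G$ and the associative commutator is bilinear, it suffices to show that every weight-$p$ iterated commutator
\[
\operatorname{ad}(g_1-1)\operatorname{ad}(g_2-1)\cdots\operatorname{ad}(g_{p-1}-1)(g_p-1) \in I(G)^{p+1} \qquad (g_1,\dots,g_p\in G),
\]
for then $\sum_i s_i(x,y)\in I(G)^{p+1}$ and the congruence holds.

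The key step is the following claim, proved by induction on the weight $k\geq 2$: for $g_1,\dots,g_k\in G$ the weight-$k$ iterated commutator lies in $I(G)^k$ and satisfies
\[
\operatorname{ad}(g_1-1)\cdots\operatorname{ad}(g_{k-1}-1)(g_k-1)\equiv c_k-1 \pmod{I(G)^{k+1}},
\]
where $c_k=[g_1,[g_2,\dots,[g_{k-1},g_k]\dots]]\in\gamma_k(G)$ is the corresponding iterated \emph{group} commutator. The base case $k=2$ is the identity $\operatorname{ad}(g-1)(h-1)=gh-hg=hg([g,h]-1)=([g,h]-1)+(hg-1)([g,h]-1)$, whose first summand lies in $I(G)^2$ (because $\gamma_2(G)\subseteq\M_2(G)$ and $\M_2(G)-1\subseteq I(G)^2$ by Jennings) and whose second summand lies in $I(G)^3$. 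For the inductive step, write $w$ for the inner weight-$(k-1)$ commutator and $d=[g_2,\dots,g_k]\in\gamma_{k-1}(G)$; the inductive hypothesis gives $w\equiv d-1\pmod{I(G)^k}$, and since $g_1-1\in I(G)$ the error $\operatorname{ad}(g_1-1)(w-(d-1))$ lies in $I(G)^{k+1}$, so
\[
\operatorname{ad}(g_1-1)(w)\equiv \operatorname{ad}(g_1-1)(d-1)=g_1 d-d g_1=d g_1\,([g_1,d]-1)\equiv c_k-1 \pmod{I(G)^{k+1}},
\]
using $[g_1,d]=c_k\in\gamma_k(G)\subseteq\M_k(G)$, hence $c_k-1\in I(G)^k$. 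Taking $k=p$ and using that $G$ has class at most $p-1$, so that $\gamma_p(G)=1$ and therefore $c_p-1=0$, yields exactly that the weight-$p$ iterated commutators lie in $I(G)^{p+1}$, which completes the proof.

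The main obstacle is precisely this inductive identification of the leading term of an associative commutator with the corresponding group commutator one filtration step higher, i.e. the bookkeeping that all lower-order error terms already sit in $I(G)^{k+1}$; a closely related statement is recorded in \cite[Lemma 1.15]{SalimSandlingMaximal}. The only external inputs are Jacobson's formula and the standard Jennings facts $\gamma_k(G)\subseteq\M_k(G)$ and $I(G)^k\cap(G-1)=\M_k(G)-1$, while the hypothesis on the nilpotency class is used only at the final step through $\gamma_p(G)=1$.
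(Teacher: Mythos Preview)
Your proof is correct and follows essentially the same route as the paper's: both reduce to showing that weight-$p$ iterated associative commutators in elements $g_i-1$ lie in $I(G)^{p+1}$, establish the congruence $\operatorname{ad}(g_1-1)\cdots\operatorname{ad}(g_{k-1}-1)(g_k-1)\equiv [g_1,\dots,g_k]-1\pmod{I(G)^{k+1}}$ (the paper via the identity from \cite{MS22}, you via the explicit computation $gh-hg=hg([g,h]-1)$), and conclude from $\gamma_p(G)=1$. The only cosmetic difference is that the paper packages the reduction by quoting Jacobson's characterization ``$p$-th power is additive $\Leftrightarrow$ $(p-1)$-Engel'' on the associated graded Lie algebra, whereas you unfold this as Jacobson's explicit formula $(x+y)^p=x^p+y^p+\sum s_i(x,y)$; your observation that both quotients are square-zero and your explicit check of well-definedness are small clarifications the paper leaves implicit.
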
 
 \begin{proof}
We will consider the $p$-power map on a part of the restricted Lie-algebra associated to $FG$, namely on $A = \bigoplus_{i=1}^p I(G)^i/I(G)^{i+1}$. We write $(\cdot,\cdot)$ for the Lie bracket in $A$. Then the claim of the lemma is exactly that the $p$-power on $A$ is a homomorphism. It was already observed by Jacobson \cite[Chapter V, Section 7]{Jacobson} that this is the case if and only if $A$ is $(p-1)$-Engel, i.e. for any $x,y \in A$ we have $(x,y,...,y) = 0$, where $y$ appears $p-1$ times. For $g \in G$ write $\bar{g} = g-1$ and let $c_1,...,c_n \in G$ such that $C = \{ \bar{c}_1,...,\bar{c}_n \}$ is an $F$-basis of $I(G)/I(G)^2$. Since the Lie-bracket is bilinear writing some elements $x,y \in A$ in this basis, we see that it would be sufficient to prove that $(\bar{g}_1,...,\bar{g}_p) = 0$ for any $\bar{g}_1,...,\bar{g}_p \in C$. Note that for $g,h \in G$   by \cite[Lemma 2.2]{MS22} one has 
 \[\bar{h}\bar{g} = \bar{g}\bar{h} + (1 + \bar{g} + \bar{h} + \bar{g}\bar{h})\overline{[h,g]} \equiv \bar{g}\bar{h} + \overline{[h,g]} \bmod I(G)^3.\]
Hence we get
  \begin{align*}
  (\bar{g}_1,...,\bar{g}_p) &= (\bar{g}_1\bar{g}_2-\bar{g}_2\bar{g}_1,\bar{g}_3,...,\bar{g}_p) \equiv (\bar{g}_1\bar{g}_2- (\bar{g}_1\bar{g}_2 + \overline{[g_2,g_1]}),\bar{g}_3,...,\bar{g}_p)  \\
  &=  (- \overline{[g_2,g_1]},\bar{g}_3,...,\bar{g}_p) \equiv (\overline{[g_3,g_2,g_1]},\bar{g}_4...,\bar{g}_p) \equiv ... \equiv \overline{[g_p,...g_2,g_1]} = 0 \mod I(G)^{p+1}
  \end{align*}
  Here in the end we have used that $[g_p,...,g_1]=1$, since $G$ has class at most $p-1$
 \end{proof}

\begin{lemma}\label{ralfadet}
    The number $r_\alpha$ is determined.
\end{lemma}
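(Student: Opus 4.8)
The plan is to recover $r_\alpha$ from the determined function $t\mapsto\lvert \M_{p^t}(G)\cap\ZZ(G)\rvert$. Set $w(t)=\log_p\lvert \M_{p^t}(G)\cap\ZZ(G)\rvert$. By \eqref{JenningsSeriesCasosUtiles} we have $\M_{p^t}(G)=G^{p^t}$, so Lemma~\ref{PreLemmaJenningsCapCenter} gives $\M_{p^t}(G)\cap\ZZ(G)=\ZZ(G)^{p^{\nu(t)}}$ and hence $w(t)=\zeta-\nu(t)$; by Lemma~\ref{lemma:JenningsCapCenterDet} each $w(t)$ is determined, and so are $\zeta=\log_p\lvert\ZZ(G)\rvert$ and the minimal number of generators $d(G)$. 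Note that $w$ is non-increasing, since $G^{p^{t+1}}\le G^{p^t}$, that $w(0)=\zeta$, and that $w(t)=0$ for $t\gg 0$.

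First I would settle the degenerate value $r_\alpha=0$. By Lemma~\ref{GeneralRemark}(5) one has $r_\alpha=0$ precisely when $d(G)$ is odd (if $\alpha=0$ then $d(G)$ is even and $r_\alpha=+\infty$), so in that case $r_\alpha$ is determined at once. Assume henceforth that $d(G)$ is even, so that either $\alpha\ge 1$ and $r_\alpha\ge 1$, or $\alpha=0$ and $r_\alpha=+\infty$ by convention.

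Next I would pin down the exact shape of $w$ on the initial segment. For $0\le t<r_\alpha$ the relevant index in the definition of $\mu(t)$ is $\alpha$, so the convention $n_{\alpha+1}=\zeta$ gives $\mu(t)=\min\{\zeta+r_\alpha-n_\alpha,\,t\}$; since $\zeta\ge\delta$ the remaining term $\zeta+t-\delta$ in $\nu(t)$ does not dominate, and Lemma~\ref{PreLemmaJenningsCapCenter} yields $\nu(t)=\min\{\zeta+r_\alpha-n_\alpha,\,t\}$. Writing $c_\alpha=\zeta+r_\alpha-n_\alpha$, this reads
\[
w(t)=\max\{\,n_\alpha-r_\alpha,\ \zeta-t\,\},\qquad 0\le t\le r_\alpha,
\]
the value at $t=r_\alpha$ being computed from the second clause of Lemma~\ref{PreLemmaJenningsCapCenter}. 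Thus $w$ decreases in unit steps from $\zeta$ down to $n_\alpha-r_\alpha$ on $\{0,\dots,c_\alpha\}$ and then stays constant equal to $n_\alpha-r_\alpha$ on $\{c_\alpha,\dots,r_\alpha\}$. A short computation at $t=r_\alpha+1$, using that the numbers $c_i=\zeta+r_i-n_i$ are strictly decreasing in $i$ by \eqref{ConditionsClassification} (and the conventions $r_0=+\infty$, $n_0=0$ when $\alpha=1$), shows $w(r_\alpha+1)<w(r_\alpha)$; hence $[c_\alpha,r_\alpha]$ is genuinely the first plateau of the non-increasing function $w$.

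Finally I would read off $r_\alpha$ intrinsically. Let $t_0=\min\{t\ge 0:\ w(t+1)=w(t)\}$ and $p_1=w(t_0)$, both determined. The same computation applies verbatim when $\alpha=0$, where $w(t)=\zeta-t$ decreases to $0$; so $p_1=0$ forces $\alpha=0$ and $r_\alpha=+\infty$, whereas $p_1>0$ forces $\alpha\ge1$ with $p_1=n_\alpha-r_\alpha$, in which case the monotonicity of $w$ makes $\{t:w(t)=p_1\}$ equal to the interval $[c_\alpha,r_\alpha]$ and therefore $r_\alpha=\max\{t:\ w(t)=p_1\}$. In either case $r_\alpha$ is determined. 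The main obstacle is the third step: proving that the first place where $w$ ceases to strictly decrease is exactly the plateau $[c_\alpha,r_\alpha]$, with no earlier or spurious plateau arising from the commutator term or from a factor $Q_i$ with $i<\alpha$. This is precisely where the inequalities $\zeta\ge\delta$ and $n_\alpha>\delta$, the strict chain $c_1>\dots>c_\alpha$ coming from \eqref{ConditionsClassification}, and a careful handling of the boundary conventions (the cases $\alpha=0$ and $r_\alpha=0$) are needed.
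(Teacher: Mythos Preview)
Your approach differs from the paper's: instead of building the auxiliary power map $f_t$ on the quotient involving $I(\Omega_t(G:\ZZ(G)))FG$, you try to read $r_\alpha$ off the ``first plateau'' of the determined function $w(t)=\log_p\lvert \M_{p^t}(G)\cap\ZZ(G)\rvert$. For $p$ odd this works, but for $p=2$ there is a genuine gap.

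The problem is the sentence ``since $\zeta\ge\delta$ the remaining term $\zeta+t-\delta$ in $\nu(t)$ does not dominate''. The term in $\nu(t)$ is $\zeta+t-\delta-\varepsilon$, and for $p=2$ (so $\varepsilon=1$) with $\zeta=\delta$ this equals $t-1$, which \emph{does} dominate $\mu(t)$ when $t\le c_\alpha$. This situation actually occurs: take $p=2$, $\alpha=1$, $\beta=0$, $n_1=5$, $r_1=3$; then $\zeta=\delta=3$, $c_1=1$, and one computes
\[
w(0)=3,\quad w(1)=3,\quad w(2)=2,\quad w(3)=2,\quad w(4)=1,\quad w(5)=0.
\]
Your recipe gives $t_0=0$, $p_1=3$, and hence $r_\alpha=\max\{t:w(t)=3\}=1$, whereas $r_\alpha=3$. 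The spurious plateau $\{0,1\}$ at height $\zeta$ comes exactly from the $\varepsilon$-term you dropped. Shifting the starting point to $t=1$ does not save the argument either: with $p=2$, $\alpha=1$, $\beta=1$, $n_1=4$, $r_1=2$, $\ell_1=3$ one has $\zeta=\delta=3$ and $w(1)=3$, $w(2)=2$, $w(3)=1$, $w(4)=0$, so there is no nontrivial plateau at all and your rule would declare $\alpha=0$.

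This is precisely why the paper brings in the extra canonical ideal $I(\Omega_t(G:\ZZ(G)))FG$ and the map $f_t$: restricting to this piece kills the contribution of the $Q(\ell,\ell)$-factors for small $t$, and the delicate $p=2$ correction is handled by the additional clause $t<\log_p(\exp(G))-1$ in the definition of $m$. Your plateau argument, as written, does not separate these contributions and therefore cannot recover $r_\alpha$ when $p=2$ and $\zeta=\delta$.
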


\begin{proof}
    By Lemma~\ref{GeneralRemark} $r_\alpha=0$ if and only if the minimal number of generators of $G$ is odd. Thus this property is determined by $FG$. Hence we can assume that $r_\alpha>0$.
    
    For each $t>0$, one has that $a_i^{p^t} \in \ZZ(G)$ if and only if $t \geq r_i$ if and only if $i \geq \i(t)$ for any $i \leq \alpha$. The same holds for the $b_i$. Moreover, for $j \geq \alpha+1$ we have $a_i^{p^t} \in \ZZ(G)$ if and only if $t \geq \ell_{j-\alpha}$ if and only if $j-\alpha > \j(t)$. The same holds replacing $a_j$ by $b_j$. Hence, we get
    $$\Omega_t(G:\ZZ(G))=\GEN{ a_i,b_i ,a_j,b_j :    \i(t) <i\leq \alpha, \quad  \alpha + \j(t)< j \leq \alpha+\beta  }.$$
    Let $\phi:FG \rightarrow FH$ be an augmentation preserving isomorphism of $F$-algebras for $H$ some group. As discussed above $I(\Omega_t(G:\ZZ(G)))FG$ is canonical, so that
    \begin{equation}\label{eq:OmegaCan}
        \phi(I(\Omega_t(G:\ZZ(G)))FG) =I(\Omega_t(H:\ZZ(H)))FH,
    \end{equation}
     Consider the map
      $$ f_t:\frac{I(\Omega_t(G:\ZZ(G)))FG+I(G)^2}{I(G)^2}\to \frac{I(G)^{p^t}}{I(G)^{p^{t}+1}}, \qquad  x+I(G)^2\mapsto x^{p^t} +I(G)^{p^t+1}.$$

    We first claim that the fact ``$f_t$ is not the zero map'' is determined, for each $t$. This will follow from the fact that both ideals $I(\Omega_t(G:\ZZ(G)))FG$ and $I(G)$ are canonical, but we include more details for clarity. Abusing notation we denote the set of elements which map to $0$ under $f_t$ as $\ker(f_t)$. Indeed, let $\phi:FG\to FH$ be an augmentation preserving isomorphism of $F$-algebras for some group $H$. Then by \eqref{eq:OmegaCan} and using that $\phi(I(G))=I(H)$ we have a commutative diagram  
        $$\xymatrix{
         \frac{I(\Omega_t(G:\ZZ(G)))FG+I(G)^2}{I(G)^2}\ar[r]^-{f_t^G} \ar[d]_-{\tilde \phi}& \frac{I(G)^{p^t} }{I(G)^{p^t+1} } \ar[d] \\
     \frac{I(\Omega_t(H:\ZZ(H)))FH+I(H)^2}{I(H)^2}\ar[r]^-{f_t^H} & \frac{I(H)^{p^t} }{I(H)^{p^t+1} }
        }$$
    where   vertical arrows are  isomorphisms induced by $\phi$. Therefore $\tilde \phi(\ker f_t^G)=\ker f_t^H
    $, and the claim follows.

    Denote
    $$m= \begin{cases} \min \{t\in L: f_t \text{ is not the zero map}  \}, \ \ \text{if} \ \ p > 2 \\ 
                       \min \{t\in L: f_t \text{ is not the zero map and $t<\log_p(\exp(G))-1$}\}, \ \ \text{if} \ \ p=2   \end{cases}$$
         The claim yields that $m$ is determined, so in order to prove the lemma it suffices to show that $m=r_\alpha$.   If the set in the definition of $m$ is empty,  we understand that   $m= +\infty$, and hence $\alpha=0$. 
        
         We observe first that if $\alpha \neq 0$, by \Cref{TheoremClassification2}  and Lemma~\ref{GeneralRemark} we have that $p=2$ implies $r_\alpha\leq r_1<n_1-1=\log_p(\exp(G))-1$. Moreover, 
    \[f_{r_\alpha }(a_\alpha-1+I(G)^2)= a_\alpha^{p^{r_\alpha}}-1 +I(G)^{p^{r_\alpha}+1}=z^{p^{\zeta -n_\alpha+r_\alpha}}-1 +I(G)^{p^{r_\alpha}+1}\]
 for some generator $z$ of $\ZZ(G)$. This element is different from zero, i.e. $z^{p^{\zeta -n_\alpha+r_\alpha}}-1 \notin I(G)^{p^{r_\alpha}+1}$, as we will show now. Note that this is equivalent to $z^{p^{\zeta-n_\alpha+r_\alpha}} \notin D_{p^{r_\alpha}+1}(G)$. This will prove that $r_\alpha\geq m$.
 
          First note that $z^{p^{\zeta -n_\alpha+r_\alpha}}\notin \ZZ(G)^{p^{\zeta-n_\alpha+r_\alpha+1}}$. So showing $\ZZ(G)^{p^{\zeta-n_\alpha+r_\alpha+1}} = \ZZ(G)\cap \M_{p^{r_\alpha}+1}(G)$ will imply $z^{p^{\zeta-n_\alpha+r_\alpha}} \notin D_{p^{r_\alpha}+1}(G)$. By Lemma~\ref{LemmaJenningsCapCenter} we have $\ZZ(G) \cap \M_{p^{r_\alpha}+1}(G) = \ZZ(G) \cap \M_{p^{r_\alpha+1}}(G)$. By Remark~\ref{rem:riseparados} we have $\i(r_\alpha+1) = \alpha-1$. So with $\tilde\nu(t)$ defined as in Lemma~\ref{LemmaJenningsCapCenter} with $t = r_\alpha + 1$ we get
\[\tilde\nu(t)  = \min\{\zeta+r_{\alpha-1} - n_{\alpha-1}, \zeta + r_\alpha+1 - n_\alpha, \zeta+ r_\alpha + 1 - \delta - \varepsilon \} = \zeta + r_\alpha+1-n_\alpha, \]
where we used the two facts that $n_\alpha - r_\alpha > n_{\alpha-1} - r_{\alpha-1}$ and $n_\alpha > \delta$. So indeed by Lemma~\ref{LemmaJenningsCapCenter} we obtain         $\ZZ(G) \cap \M_{p^{r_\alpha+1}} = \ZZ(G)^{p^{\zeta-n_\alpha+r_\alpha+1}}$ which is what we needed.
         

Conversely, i.e. to prove that $r_\alpha \leq m$, suppose that $t\in L $ is such that $t<r_\alpha$. The last inequality implies that
\[\{a_j-1+I(G)^2, b_j-1 +I(G)^2 : \alpha <j \leq \beta \text{ and }\ell_j\leq t \}\]
 is an $F$-basis of $\frac{I(\Omega_t(G:\ZZ(G)))FG+I(G)^2}{I(G)^2}$. Fix and integer $j$ with the property that $\alpha<j\leq \beta$ and $\ell_j\leq t$. It is clear that $a_j^{p^t} = b_j ^{p^t}=1$.
 If $p$ is odd, then by Lemma~\ref{lem:FrobeniusMap} we get for any $x,y, \in F$ that
\[f_t(x(a_j-1)+y(b_j-1)+I(G)^2) =  x^{p^t}(a_j-1)^{p^t} + y^{p^t}(b_j-1)^{p^{t}}+I(G)^{p^t+1} = I(G)^{p^t+1 }.\]
 Thus in this case $f_t$ is the zero map.

   Now assume $p=2$ and additionally that  $t<\log_p(\exp(G))-1$ . Observe that, also for odd $p$, we have $\M_n(G)\cap \ZZ(G)=1$ if and only if $\M_{n}(G)=1$. Indeed, this follows from the facts that $1\neq   \M_{p^{\delta-1}+1}(G) =G^{p^{\delta}} (G')^{p^{\delta-1}}\subseteq \ZZ(G)$,  and the Jennings series is decreasing.
Using the general congruence 
 \[(h-1)(g-1) \equiv (g-1)(h-1) + ([h,g]-1) \bmod I(G)^3\]
following from \cite[Lemma 2.2]{MS22} and the power-map from Lemma~\ref{LemmaPowers} an easy induction argument shows that for any $x,y \in F$ we get
\begin{align*}
f_t(x(a_j-1) + y(b_j-1)) = (x(a_j-1) + y(b_j-1))^{2^t} &\equiv x^{2^t}(a_j^{2^t}-1) + y^{2^t}(b_j^{2^t}-1) + (yx)^{2^{t-1}}([b_j,a_j]^{2^{t-1}}-1) \\
                            & \equiv (yx)^{2^{t-1}}([b_j,a_j]^{2^{t-1}}-1) \mod I(G)^{2^t+1}.
\end{align*}
where on the last congruence we used that $a_j^{2^t} = b_j^{2^t} = 1$ as $t \geq \ell_j$. Assume $f_t(x(a_j-1) + y(b_j-1)) \neq 0$. This is equivalent to $[b_j,a_j]^{2^{t-1}} \not\in \M_{2^t+1}(G)$ by the previous congruences. The order of $[b_j,a_j]$ is $p^{\ell_j}$, so in particular $[b_j,a_j]^{2^{t-1}} \neq 1$ implies $t = \ell_j$ and that $[b_j,a_j]^{2^{t-1}}$ is a central involution which lies in $\ZZ(G)^{p^{\zeta-1}} \setminus  \M_{2^t+1}(G)$. This in turn implies $\ZZ(G)^{p^{\zeta-1}} \cap \M_{2^t+1}(G) = 1$ which by the observation at the beginning of this paragraph is equivalent to $\M_{2^t+1}(G)=1$. But then $t+1\geq \log_p(\exp(G))$, contradicting our hypothesis on $t$ and hence $f_t$ is indeed the zero map for all admissible values of $t$.

\end{proof}

\begin{lemma}\label{lemma:criterionr}
	Let $t\geq 1$. Suppose that $t>r_\alpha$.  Then $t=r_{\i(t)+1}$ if and only if
	\begin{equation}\label{eq:criterionr}
		p|D_{p^{t+1}}(G) \cap \ZZ(G)| =|D_{p^t}(G) \cap \ZZ(G)|=|D_{p^{t-1}}(G) \cap \ZZ(G)|.
	\end{equation} 
\end{lemma}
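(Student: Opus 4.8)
The plan is to translate condition \eqref{eq:criterionr} into a purely numerical statement about the function $\mu$ and then to read it off from the local shape of $\mu$ near $t$. First I would note that the hypothesis $t>r_\alpha$ forces all three exponents $t-1,t,t+1$ to be $\geq r_\alpha$, so that \Cref{PreLemmaJenningsCapCenter} together with \eqref{JenningsSeriesCasosUtiles} applies to each of them and yields $|D_{p^s}(G)\cap \ZZ(G)|=|\ZZ(G)^{p^{\mu(s)}}|=p^{\zeta-\mu(s)}$ for $s\in\{t-1,t,t+1\}$. Substituting these into \eqref{eq:criterionr}, the factor $p$ becomes an additive shift of $1$ in the exponent, so \eqref{eq:criterionr} is equivalent to the conjunction
$$\mu(t+1)=\mu(t)+1 \qand \mu(t)=\mu(t-1).$$

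The core of the argument is a description of $\mu$ as a function of its argument. Writing $k=\i(t)$, so that $r_k>t\geq r_{k+1}$ and $\mu(t)=\zeta+\min\{r_k-n_k,\,t-n_{k+1}\}$, I would show, using the inequalities \eqref{ConditionsClassification} (that $n_i-r_i$ is strictly increasing and the $r_i$ strictly decreasing) together with \Cref{rem:riseparados} (which gives $r_k\geq r_{k+1}+2$), that on each block $s\in[r_{k+1},r_k]$ the function $\mu$ first increases with slope $1$ along the \emph{ramp} $[r_{k+1},s_k^{*}]$, where $s_k^{*}=r_k-(n_k-n_{k+1})$, and is then constant equal to $\zeta+r_k-n_k$ along the \emph{plateau} $[s_k^{*},r_k]$. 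Locating the crossover point $s_k^{*}$ inside the block and checking the block endpoints also shows that $\mu$ is continuous across the boundary $s=r_k$: the plateau value of the block $[r_{k+1},r_k]$ and the value $\mu(r_k)$ computed with index $k-1$ both equal $\zeta+r_k-n_k$. Thus $\mu$ is a non-decreasing, piecewise ramp--plateau function, and the configuration ``flat from $t-1$ to $t$, then a jump of $1$ from $t$ to $t+1$'' occurs exactly at a junction between two blocks, i.e. at a point of the form $r_{k+1}$.

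With this picture both implications are direct. For the forward direction, assuming $t=r_{\i(t)+1}=r_{k+1}$, I would evaluate $\mu(t-1)$ with index $k+1$ and $\mu(t),\mu(t+1)$ with index $k$; the inequalities give $\mu(t-1)=\mu(t)=\zeta+r_{k+1}-n_{k+1}$ and $\mu(t+1)=\zeta+r_{k+1}+1-n_{k+1}=\mu(t)+1$, which are exactly the two required conditions. For the converse, suppose $t\neq r_{k+1}$, so $t\in[r_{k+1}+1,\,r_k-1]$: if $t$ lies on the ramp then $\mu(t)=\mu(t-1)+1\neq\mu(t-1)$, and if $t$ lies on the plateau then $\mu(t+1)=\mu(t)$; in either case one of the two conditions fails, so \eqref{eq:criterionr} fails.

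The step I expect to be the main obstacle is the bookkeeping of the minimum defining $\mu$: pinning down which of the two branches is active at each of $t-1,t,t+1$ and handling the boundary and degenerate cases cleanly. In particular one must treat the conventions $r_0=+\infty$, $n_0=0$ for $k=0$ (where the plateau degenerates and the whole top block is a pure ramp, so the converse lands in the ramp case), and verify that every parameter referenced, namely $r_{k+1},n_{k+1},n_{k+2}$, is genuine — which holds because $t=r_{k+1}>r_\alpha$ forces $k+1<\alpha$. Once the ramp--plateau shape of $\mu$ and its continuity at block boundaries are established, the equivalence follows immediately.
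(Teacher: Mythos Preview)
Your proposal is correct and follows essentially the same approach as the paper: both reduce \eqref{eq:criterionr} via \Cref{PreLemmaJenningsCapCenter} and \eqref{JenningsSeriesCasosUtiles} to conditions on $\mu$, and then case-analyze according to which branch of the defining minimum is active, using \Cref{rem:riseparados} and the inequalities in \eqref{ConditionsClassification}. Your ramp--plateau picture is a tidy repackaging of the paper's direct case split (the paper treats the boundary case $t=r_k-1$ separately rather than as the top of the plateau, and it records explicitly that $\mu(s)\leq\zeta$ so that $|\ZZ(G)^{p^{\mu(s)}}|=p^{\zeta-\mu(s)}$, a point you use implicitly), but the computations are the same.
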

\begin{proof} First note that the hypothesis implies $\alpha > 0$. Write $i:=\i(t)$. Then $t>r_\alpha$ implies $i<\alpha$.
	Assume first that $t=r_{i+1}$. Then $\i(t-1)=i+1$, and $\i(t+1)=i $  by \Cref{rem:riseparados}. Hence by \eqref{JenningsSeriesCasosUtiles} and Lemma~\ref{PreLemmaJenningsCapCenter}, which we can apply as $t > r_\alpha$:
	\begin{eqnarray*}
		D_{p^{r_{i+1}}}(G) \cap \ZZ(G)=& \ZZ(G)^{p^{\mu(r_{i+1})}} &= \ZZ(G)^{p^{\zeta +r_{i+1}-n_{i+1}}}; \\
		D_{p^{r_{i+1}-1}}(G) \cap\ZZ(G)=&\ZZ(G)^{p^{\mu(r_{i+1}-1)}}&= \ZZ(G)^{p^{\zeta+r_{i+1} -n_{i+1}}}; \\
		D_{p^{r_{i+1}+1}}(G) \cap\ZZ(G)=& \ZZ(G)^{p^{\mu(r_{i+1}+1)}}&= \ZZ(G)^{p^{\zeta+r_{i+1}+1-n_{i+1}}} .
	\end{eqnarray*} 
Here in the first and third line we used $n_{i+1}-r_{i+1} < n_i - r_i$ and in the second line $n_{i+1} > n_{i+2}$. 
 Since $\zeta\geq n_\alpha-r_\alpha > n_{i+1}-r_{i+1}-1$,  \eqref{eq:criterionr} follows.
 
Conversely assume $t>r_{i+1}$. We have that $\i(t-1)=i$. If $t=r_i-1$  then $\i(t+1)=i-1$  and by Lemma~\ref{PreLemmaJenningsCapCenter}
$$	D_{p^{t}}(G) \cap \ZZ(G)= \ZZ(G)^{p^{\mu(r_{i}-1)}} = \ZZ(G)^{p^{\zeta +r_{i}-n_{i}}}=\ZZ(G)^{p^{\mu(r_i)}}=D_{p^{t+1}}(G) \cap\ZZ(G), $$
where we have used $n_{i} > n_{i+1}$ and $n_{i}-r_i > n_{i-1}-r_{i-1}$. Thus \eqref{eq:criterionr} fails in this case. Otherwise, i.e., if $t<r_i-1$,  then $\i(t+1)=i$ and either $\mu(t)=\zeta+r_i-n_i $, in which case also $\mu(t+1) = \zeta+r_i-n_i$, and hence by Lemma~\ref{PreLemmaJenningsCapCenter}
$$D_{p^{t+1}}(G) \cap \ZZ(G)=\ZZ(G)^{p^{\zeta+r_i-n_i}}=D_{p^{t}}(G) \cap \ZZ(G), $$ 
or $\mu(t)=\zeta+t-n_{i+1}$, in which case $\mu(t-1) = \zeta + t - 1 - n_{i-1}$ and thus, also by Lemma~\ref{PreLemmaJenningsCapCenter} and since $\zeta\geq n_\alpha-r_\alpha \geq n_{i+1}-r_{i+1} >n_{i+1}-t$, $$D_{p^{t-1}}(G) \cap \ZZ(G)= \ZZ(G)^{p^{\zeta+t-1-n_{i+1}}}\subsetneq  \ZZ(G)^{p^{\zeta+t-n_{i+1}}} =D_{p^t}(G) \cap \ZZ(G).$$    Either way \eqref{eq:criterionr} fails. 


\end{proof}

 \begin{lemma}\label{lema:detCaso1}
If $G$ and $H$ are both of the form \eqref{GroupClassification} and $FG \cong FH$, then $G \cong H$.
\end{lemma}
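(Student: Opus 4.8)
The plan is to show that every parameter attached to $G$ by the presentation \eqref{GroupClassification}, namely $\alpha,\beta,n_1,r_1,\ldots,n_\alpha,r_\alpha,\ell_1,\ldots,\ell_\beta$, is determined by the isomorphism type of $FG$. Since by the uniqueness statement in Leong's Theorem~\ref{TheoremClassification} these parameters pin down the isomorphism type of a group of the form \eqref{GroupClassification}, and $H$ is assumed to be of the same form, this will immediately give $G\cong H$. I would start by collecting the invariants already established. The multiset $L^G=\{r_1,\ldots,r_\alpha,\ell_1,\ldots,\ell_\beta\}$ is determined via $G/\ZZ(G)$ and Deskins' theorem; the exponent $\zeta=\log_p|\ZZ(G)|$ of the (cyclic) center is determined by Ward's result; the number $r_\alpha$ is determined by Lemma~\ref{ralfadet}; and by Lemma~\ref{lemma:JenningsCapCenterDet} all the orders $|\M_s(G)\cap\ZZ(G)|$ are determined. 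Combining the latter with \eqref{JenningsSeriesCasosUtiles} and Lemmas~\ref{PreLemmaJenningsCapCenter}--\ref{LemmaJenningsCapCenter}, and using that $\nu(t)=\mu(t)$ for $t\geq r_\alpha$, the value $\mu(t)$ is determined for every $t\geq r_\alpha$.

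The first main step is to split the determined multiset $L^G$ into its ``$r$-part'' $\{r_1,\ldots,r_\alpha\}$ and its ``$\ell$-part'' $\{\ell_1,\ldots,\ell_\beta\}$. Because the $r_i$ are strictly decreasing with minimum $r_\alpha$, any value of $L^G$ strictly below $r_\alpha$ must be an $\ell_j$, and the value $r_\alpha$ itself contributes exactly one copy to the $r$-part. For the values $t>r_\alpha$ I would invoke the criterion \eqref{eq:criterionr} of Lemma~\ref{lemma:criterionr}: since $t>r_\alpha$ forces $r_{\i(t)+1}$ to be the largest $r$-value not exceeding $t$, the equality $t=r_{\i(t)+1}$ says precisely that $t$ is itself an $r$-value, and \eqref{eq:criterionr} is phrased purely in terms of the determined orders $|D_{p^s}(G)\cap\ZZ(G)|$ for $s=t-1,t,t+1$. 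Hence for each such $t$ one can decide from $FG$ alone whether it is some $r_i$; together with the two boundary observations this recovers the set $\{r_1,\ldots,r_\alpha\}$, and deleting one copy of each of these from $L^G$ recovers the $\ell$-multiset. This determines $\alpha$, $\beta$, all the $r_i$ and all the $\ell_j$.

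The second step recovers the $n_i$ by evaluating the determined function $\mu$ at the recovered points $t=r_i$. Using the ordering conditions \eqref{ConditionsClassification}, in particular that the differences $n_i-r_i$ are strictly increasing, one checks that at $t=r_i$ we have $\i(r_i)=i-1$ and that the minimum defining $\mu(r_i)$ is always attained by its second argument, so that $\mu(r_i)=\zeta+r_i-n_i$ and therefore $n_i=\zeta+r_i-\mu(r_i)$ is determined. The only case needing separate treatment is $r_\alpha=0$ (equivalently $d(G)$ odd), where $\mu(0)$ is unavailable; but then $\zeta=\max(r_1,n_\alpha,\ell_1)=n_\alpha$ by \eqref{ConditionsClassification}, so $n_\alpha$ is directly determined, while every other $r_i$ is positive. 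Having determined all parameters, I conclude $G\cong H$ by Leong's uniqueness statement.

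I expect the main obstacle to be the first step: separating the $r_i$ from the $\ell_j$ inside the single determined multiset $L^G$, since a priori a given exponent may occur simultaneously as some $r_i$ and with various multiplicities among the $\ell_j$. The entire weight of this separation rests on the arithmetic criterion of Lemma~\ref{lemma:criterionr}, and some care is needed at the boundary value $t=r_\alpha$ and in the degenerate case $r_\alpha=0$. Once the split is secured, recovering the $n_i$ from $\mu$ is routine bookkeeping with the inequalities in \eqref{ConditionsClassification}.
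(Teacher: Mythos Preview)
Your proposal is correct and follows essentially the same route as the paper: determine $L^G$ from $G/\ZZ(G)$, pick out $r_\alpha$ via Lemma~\ref{ralfadet}, use the criterion of Lemma~\ref{lemma:criterionr} to separate the remaining $r_i$ from the $\ell_j$ inside $L^G$, and then read off each $n_i$ from $|D_{p^{r_i}}(G)\cap\ZZ(G)|=p^{\,n_i-r_i}$ (equivalently from $\mu(r_i)=\zeta+r_i-n_i$). Your treatment of the boundary case $r_\alpha=0$ (where $t=0$ falls outside the hypotheses of Lemma~\ref{PreLemmaJenningsCapCenter}) via $\zeta=n_\alpha$ is in fact a bit more careful than the paper's write-up, but the argument is otherwise the same.
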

\begin{proof}
    By Theorem~\ref{TheoremClassification} it suffices to show that the ordered tuple of integers $(r_1,\dots, r_\alpha,\ell_1,\dots,\ell_\beta,n_1,\dots, n_\alpha)$ is determined. The multiset $L$ is determined by the isomorphism type of $G/\ZZ(G)$ by Lemma~\ref{GeneralRemark}. Moreover, by \Cref{ralfadet} $r_\alpha$ is determined. If $r_\alpha=+\infty$ then $\alpha=0$, and the required ordered tuple is just $(\ell_1,\dots \ell_\beta)$, which are just the elements of $L$ in the natural order. Thus we can assume that $r_\alpha<+\infty$, i.e., that $\alpha>0$. By \Cref{lemma:criterionr}, for each $t>r_\alpha$, we have that $t$ appears (exactly once) in the multiset $(r_1,\dots, r_\alpha)$ if and only if condition \eqref{eq:criterionr} holds. As this condition is determined by Lemma~\ref{lemma:JenningsCapCenterDet}, we derive that the multiset $(r_1,\dots, r_\alpha)$ is determined. Now as $L$ is determined as a multiset, so is the ordered tuple $(r_1,\dots, r_\alpha,\ell_1,\dots, \ell_\beta)$.

    It only remains to show that the list $(n_1,\dots, n_\alpha)$ is determined. Indeed, the previous and \Cref{lemma:JenningsCapCenterDet} yield that the list of orders $(|D_{p^{r_i}}(G) \cap \ZZ(G)|)_{i=1}^\alpha$ is determined. By \eqref{JenningsSeriesCasosUtiles}, Lemma~\ref{PreLemmaJenningsCapCenter}	and the fact that $n_{i-1}-r_{i-1} < n_i -r_i$ we obtain
    $$|D_{p^{r_i}}(G) \cap \ZZ(G)|=|\ZZ(G)^{p^{\zeta+r_i-n_i}}| =p^{ n_i-r_i}, $$
    so the lemma follows as we had shown before that $r_i$ is determined.
\end{proof}

Due to Theorem~\ref{TheoremClassification} this immediately yields:
\begin{corollary}\label{cor:OddCase}
Let $p$ be an odd prime, $F$ a field of characteristic $p$ and $G$ a $p$-group of class $2$ with cyclic center. Then the isomorphism type of $G$ is determined by $FG$.
\end{corollary}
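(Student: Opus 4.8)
The plan is to deduce the statement directly from Lemma~\ref{lema:detCaso1}, after checking that every group $H$ with $FH \cong FG$ automatically falls into the same normal form as $G$. Concretely, I would argue that such an $H$ is again a finite $p$-group of nilpotency class $2$ with cyclic center, so that Theorem~\ref{TheoremClassification} forces both $G$ and $H$ to be of the shape \eqref{GroupClassification}; once this is in place, Lemma~\ref{lema:detCaso1} applies verbatim and yields $G \cong H$.

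First I would record the standard fact that an $F$-algebra isomorphism $FG \cong FH$, for a field $F$ of characteristic $p$ and a finite $p$-group $G$, forces $H$ to be a finite $p$-group of the same order. Next I would invoke the two determinacy inputs recalled earlier: the property of having nilpotency class $2$ is determined by the group algebra (see \cite{BK07}), and the isomorphism type of the center $\ZZ(G)$ is determined (see \cite{Ward}); in particular the property that $\ZZ(G)$ is cyclic is determined. Combining these, any $H$ with $FH \cong FG$ is a finite $p$-group of class $2$ with cyclic center.

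The decisive simplification for odd $p$ is that Leong's classification in Theorem~\ref{TheoremClassification} provides a single normal form: every non-trivial finite $p$-group of class at most $2$ with cyclic center is isomorphic to a central product of the shape \eqref{GroupClassification} with parameters satisfying \eqref{ConditionsClassification}. Thus, in contrast with the even case, there is no competing family \eqref{GroupClassification2b} and hence no preliminary dichotomy to resolve: both $G$ and $H$ are of the form \eqref{GroupClassification} for free. Applying Lemma~\ref{lema:detCaso1} to this pair then gives $G \cong H$, which is exactly the assertion that the isomorphism type of $G$ is determined by $FG$.

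There is no real obstacle remaining at this level, since the substantive work has already been carried out: Lemma~\ref{ralfadet} determines $r_\alpha$, Lemma~\ref{lemma:criterionr} together with the determinacy of $G/\ZZ(G)$ (hence of the multiset $L$) pins down the remaining $r_i$ and the $\ell_j$, and Lemma~\ref{PreLemmaJenningsCapCenter} recovers the $n_i$ from the orders $|D_{p^{r_i}}(G) \cap \ZZ(G)|$. The only thing the corollary adds is the observation that, for odd $p$, these ordered parameters constitute a complete isomorphism invariant by the uniqueness clause of Theorem~\ref{TheoremClassification}, so matching them for $G$ and $H$ is the same as asserting $G \cong H$.
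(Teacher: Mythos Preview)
Your proposal is correct and follows exactly the paper's approach: the paper simply states that the corollary ``immediately'' follows from Theorem~\ref{TheoremClassification} after Lemma~\ref{lema:detCaso1}, relying on the already-established fact (noted at the start of Section~3) that any $H$ with $FH\cong FG$ is again a $p$-group of class~$2$ with cyclic center. Your write-up just makes this one-line deduction explicit.
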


 \begin{lemma}\label{lema:detCaso2}
If $G$ and $H$ are both of the form \eqref{GroupClassification2b} and $FG \cong FH$, then $G \cong H$.
 \end{lemma}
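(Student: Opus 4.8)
The plan is to follow the same high-level strategy as in Lemma~\ref{lema:detCaso1}: by Theorem~\ref{TheoremClassification2} it suffices to show that the ordered tuple of parameters $(n,\ell_1,\dots,\ell_\beta)$ attached to a group of the form \eqref{GroupClassification2b} is determined by $FG$. The key simplification over the previous case is that here there is a single ``$R$-type'' factor while the remaining factors are all of the degenerate shape $Q_i(\ell_i,\ell_i)$, so there are no auxiliary parameters $n_i$ to reconstruct and none of the machinery of Lemmas~\ref{ralfadet} and~\ref{lemma:criterionr} should be needed.

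First I would recall that the multiset $L=\{n,\ell_1,\dots,\ell_\beta\}$ is already known to be determined, since it is read off from the isomorphism type of $G/\ZZ(G)$, which is determined because $I(\ZZ(G))FG$ is canonical and $G/\ZZ(G)$ is abelian. Concretely, by Lemma~\ref{GeneralRemark2} one has $G/\ZZ(G)\cong C_{p^n}^2\times\prod_{j=1}^\beta C_{p^{\ell_j}}^2$, so every parameter contributes a square; dividing all multiplicities by $2$ then recovers $L$ unambiguously.

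Next I would isolate $n$. Two routes are available. The cleanest is to note that condition \eqref{ConditionsClassification2b} forces $n\geq\ell_1\geq\cdots\geq\ell_\beta$, so $n=\max L$ and is therefore determined the moment $L$ is. Alternatively, $n$ is pinned down by the exponent: by Lemma~\ref{GeneralRemark2} one has $\exp(G)=p^{n+1}$, and the exponent is a determined invariant because \eqref{JenningsSeriesCasosUtiles} gives $\M_{p^t}(G)=G^{p^t}$, so that $\exp(G)\le p^t$ if and only if $\M_{p^t}(G)=1$, a condition visible from the determined Jennings quotients. Either way $n$ is determined.

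Once $n$ is known, removing a single copy of $n$ from the determined multiset $L$ yields the multiset $\{\ell_1,\dots,\ell_\beta\}$, and since the $\ell_j$ are required to be non-increasing by \eqref{ConditionsClassification2b} the ordered tuple $(\ell_1,\dots,\ell_\beta)$ is recovered. Hence the full tuple $(n,\ell_1,\dots,\ell_\beta)$ is determined by $FG$, and Theorem~\ref{TheoremClassification2} gives $G\cong H$. I do not expect a genuine obstacle here; the only point requiring a word of justification is that the parameter $n$ coming from the unique factor $R(n)$ can be separated from the $\ell_j$, which is immediate from the ordering $n\geq\ell_1$ (equivalently, from $n=\max L$), even in the boundary case $\ell_1=n$, where $n$ simply occurs with multiplicity at least two in $L$.
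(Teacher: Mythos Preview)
Your proposal is correct and follows essentially the same approach as the paper: both arguments observe that since $n\geq\ell_1\geq\cdots\geq\ell_\beta$, the ordered tuple $(n,\ell_1,\dots,\ell_\beta)$ is simply the determined multiset $L$ arranged in non-increasing order. The paper's proof is just terser, dispensing with the alternative exponent argument and the explicit discussion of removing a copy of $n$.
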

\begin{proof}
     It suffices to prove that the list $(n,\ell_1,\dots, \ell_\beta)$ is determined, but it is just $L$ ordered in the natural order, which is determined by the isomorphism type of $G/\ZZ(G)$ by Lemma~\ref{GeneralRemark2}.
\end{proof}

We record the only case left to decide, though as we will see in the following, this turns out to be more challenging than the cases handled so far:
\begin{lemma}\label{lemma:LastCaseStanding}
Let $G$ and $H$ be non-isomorphic $p$-groups of nilpotency class $2$ with cyclic center such that $FG \cong FH$. Then $p=2$ and one of the groups, say $G$, is of form \eqref{GroupClassification}, while the other is of form \eqref{GroupClassification2b}. Moreover, we have
 $$L^G=\{\ell_1^G,\dots, \ell_\beta^G\} =\{n^H,\ell_1^H,\dots, \ell_{\beta-1}^H\} = L^H $$
 for a certain natural number $\beta$.
\end{lemma}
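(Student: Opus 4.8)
The plan is to assemble the determinacy results established in the previous lemmas, since the genuine work has already been carried out there. First I would pin down that $p=2$: by Corollary~\ref{cor:OddCase} the isomorphism type of a $p$-group of class $2$ with cyclic centre is determined by its modular group algebra whenever $p$ is odd, so the existence of non-isomorphic $G$ and $H$ with $FG\cong FH$ forces $p=2$. Next, by Theorem~\ref{TheoremClassification2} each of $G$ and $H$ is either of the form \eqref{GroupClassification} or of the form \eqref{GroupClassification2b}. If both were of the form \eqref{GroupClassification}, then Lemma~\ref{lema:detCaso1} would give $G\cong H$, and symmetrically Lemma~\ref{lema:detCaso2} excludes that both are of the form \eqref{GroupClassification2b}. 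Hence, after possibly interchanging the roles of $G$ and $H$, the group $G$ is of the form \eqref{GroupClassification} and $H$ of the form \eqref{GroupClassification2b}.

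The crucial remaining step is to show that $G$ in fact has $\alpha=0$, i.e.\ that $G$ is a central product of groups $Q(\ell,\ell)$ with no factor $Q_i(n_i,r_i)$ for which $n_i\neq r_i$. For this I would invoke Lemma~\ref{ralfadet}, which guarantees that the (possibly infinite) value $r_\alpha$ is determined by the isomorphism type of the group algebra. Since $H$ is of the form \eqref{GroupClassification2b}, the convention in force reads $\alpha=0$ for $H$, and therefore $r_\alpha^H=+\infty$. As $FG\cong FH$, the determinacy of $r_\alpha$ yields $r_\alpha^G=r_\alpha^H=+\infty$, which by the same convention ($r_\alpha=+\infty$ if and only if $\alpha=0$) means $\alpha^G=0$. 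Consequently $G=Q_1(\ell_1,\ell_1)*\dots*Q_\beta(\ell_\beta,\ell_\beta)$ and $L^G=\{\ell_1^G,\dots,\ell_\beta^G\}$, where I set $\beta=\beta^G$.

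Finally I would compare the multisets $L^G$ and $L^H$. The isomorphism type of $G/\ZZ(G)$ is determined by the result of Deskins recorded before Lemma~\ref{lemma:JenningsCapCenterDet}, and reading off $L$ from this quotient via Lemma~\ref{GeneralRemark} and Lemma~\ref{GeneralRemark2} shows that the multiset $L$ is an invariant of $FG$; hence $L^G=L^H$. Writing $L^H=\{n^H,\ell_1^H,\dots,\ell_{\beta^H}^H\}$ and comparing cardinalities with $L^G=\{\ell_1^G,\dots,\ell_\beta^G\}$ gives $\beta^H+1=\beta$, that is $\beta^H=\beta-1$, and so the asserted chain of equalities
$$L^G=\{\ell_1^G,\dots,\ell_\beta^G\}=\{n^H,\ell_1^H,\dots,\ell_{\beta-1}^H\}=L^H$$
follows. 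I do not expect a genuine obstacle at this stage: the statement is a bookkeeping consequence of the earlier determinacy lemmas, and the only point demanding care is the faithful handling of the convention $r_\alpha=+\infty\Leftrightarrow\alpha=0$, which is precisely what converts the structural dichotomy between the two presentation types into a single numerical invariant that must agree for $G$ and $H$.
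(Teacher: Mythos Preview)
Your proof is correct and follows essentially the same route as the paper: both invoke Lemmas~\ref{lema:detCaso1} and~\ref{lema:detCaso2} to reduce to the mixed case, use Lemma~\ref{ralfadet} (with the convention $r_\alpha=+\infty\Leftrightarrow\alpha=0$) to force $\alpha^G=0$, and then read off $L^G=L^H$ and $\beta^H=\beta-1$ from the determined quotient $G/\ZZ(G)$ via Lemmas~\ref{GeneralRemark} and~\ref{GeneralRemark2}. The only cosmetic difference is that you cite Corollary~\ref{cor:OddCase} separately to dispose of odd $p$, whereas the paper absorbs this into the appeal to Lemma~\ref{lema:detCaso1}.
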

\begin{proof}
By Lemmas \ref{lema:detCaso1} and \ref{lema:detCaso2} we get that we can assume that $G$ is of form \eqref{GroupClassification} and $H$ of form \eqref{GroupClassification2b}. Write $L^G = \{r_1^G,...,r_{\alpha_G}^G,\ell_1^G,...,\ell_{\beta_G}^G \}$ and $L^H = \{n_H,\ell_1^H,...,\ell_{\beta_H}^H \}$. We have $r_{\alpha_H}^H = \infty$, so that by Lemma~\ref{ralfadet} we also have $r_{\alpha_G}^G = \infty$ which means $\alpha_G=0$. As $G/\ZZ(G) \cong H/\ZZ(H)$ from Lemmas~\ref{GeneralRemark} and \ref{GeneralRemark2} we obtain that $\beta_G = \beta_H+1$ and defining $\beta = \beta_G$ we can write 
 $$L^G=\{\ell_1^G,\dots, \ell_\beta^G\} =\{n^H,\ell_1^H,\dots, \ell_{\beta-1}^H\} = L^H. $$
\end{proof}

\section{The groups of even order for nice fields}\label{sec:QuadForms}
As observed in Corollary~\ref{cor:OddCase} the \Cref{lema:detCaso1} solves the modular isomorphsim problem for all fields for the groups of class $2$ with cyclic center if $p$ is odd. If $p=2$, in the light of the mentioned lemma and \Cref{lema:detCaso2}, it would be sufficient to prove that the property of being of the form \eqref{GroupClassification} or of the form \eqref{GroupClassification2b} is determined. This turns out not to be possible using only known group-theoretical invariants as already observed in \cite{HS06} where the groups $Q(2,2)$ and $R(2)$ appear with their \texttt{GAP}-Ids \texttt{[64,18]} and \texttt{[64,19]}, respectively. One can also easily check using computers that the same holds for the groups $Q(2,2)*Q(1,1)$ and $R(2)*Q(1,1)$.

In this section $F$ is always going to be a field of characteristic $2$. We are not able to distinguish between the types \eqref{GroupClassification} and \eqref{GroupClassification2b} for arbitrary fields of characteristic $2$. The assumption we need is that the polynomial $X^2+X+1$ is irreducible as an element of the polynomial ring $F[X]$. 
Our argument relies on analyzing the elements mapping to $0$ under a certain power map. These power maps turn out to produce non-degenerate quadratic forms which are similar. Under our assumption on $F$  this produces a contradiction, while this is not true if the assumption is dropped. We will shortly introduce the concepts needed and prove a lemma which is probably well known to a specialist, but which we could not encounter in the literature.
 
Let $n$ be a positive integer and $V = F^{2n}$ a vector space of dimension $2n$. Let $B: V \rightarrow F$ be a non-singular quadratic form. In particular, $B(sa) = s^2B(a)$ for any $s \in F$ and $a \in V$. The \emph{polar form} $C: V \times V \rightarrow F$ associated to $B$ is a bilinear form defined as $C(v,w) = B(v+w) + B(v) + B(w)$. As $B$ is assumed to be non-singular, $C$ is non-degenerate. Two quadratic forms $B$ and $B'$ are \emph{similar}, if there exist a constant $s \in F^\times$ and a matrix $A \in \operatorname{GL}(V)$ such that $B(v) = s B'(Av)$ holds for all $v \in V$. It was shown by Arf \cite{Arf} that $V$ has a symplectic basis $a_1,...,a_n,b_1,...,b_n$ with respect to $C$, i.e. $C(a_i,b_i) = 1$ and $C(a_i,a_j) = C(b_i,b_j) = 0$ for each $1\leq i,j \leq n$, while also $C(a_i,b_j) = 0$ when $i \neq j$. Arf also showed that then up to equivalence of quadratic forms one can assume that with respect to some symplectic basis there are $x_i,y_i,z_i \in F$ for $1\leq i \leq n$ such that
\[B(s_1,...,s_n,t_1,...,t_n) = \sum_{i=1}^n x_i s_i^2 + y_it_i^2 + z_is_it_i.\]
The \emph{Arf-invariant} $\mathcal{R}(B)$ of $B$ is defined as $\sum_{i=1}^n x_iy_i$. Note that this equals $\sum_{i=1}^n B(a_i)B(b_i)$. It was shown by Arf that for equivalent quadratic forms $B$ and $B'$ one has $\mathcal{R}(B) = \mathcal{R}(B') + t$ for some $t \in \{x^2 + x \ | \ x \in F \}$. We could not encounter this for similar quadratic forms in the literature, so we include it as a lemma. 

\begin{lemma}\label{lem:ArfType}
Let $B$ and $B'$ be non-singular quadratic forms on $V = F^{2n}$ which are similar. Then  $\mathcal{R}(B) = \mathcal{R}(B') + t$ for some $t \in \{x^2 + x \ | \ x \in F \}$.
\end{lemma}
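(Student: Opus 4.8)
The plan is to reduce the assertion for \emph{similar} forms to the assertion for \emph{equivalent} forms, which is exactly the result of Arf quoted just before the lemma, by isolating the effect of the scalar in the similarity relation. Writing the similarity as $B(v) = s\,B'(Av)$ with $s \in F^\times$ and $A \in \GL(V)$, I would introduce the auxiliary form $B''(v) := B'(Av)$. Since $A$ is invertible, $B''$ is equivalent to $B'$ in the sense used in the discussion of Arf's theorem, so that $\mathcal{R}(B'') = \mathcal{R}(B') + t_1$ for some $t_1 \in \{x^2+x \mid x \in F\}$. It then remains only to compare $\mathcal{R}(B)$ with $\mathcal{R}(B'')$, where now $B = s\,B''$ differ merely by the global scalar $s$.

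For this scaling step I would first record that the polar forms satisfy $C_B = s\,C_{B''}$, which is immediate from $C(v,w) = B(v+w)+B(v)+B(w)$ together with $B = sB''$; in particular $C_B$ is non-degenerate because $C_{B''}$ is and $s \neq 0$, so $B$ is again non-singular. Next I would fix a symplectic basis $a_1,\dots,a_n,b_1,\dots,b_n$ of $V$ with respect to $C_{B''}$, giving the admissible representative $\mathcal{R}(B'') = \sum_{i=1}^n B''(a_i)B''(b_i)$. Since $C_B = s\,C_{B''}$, the rescaled vectors $a_i' := s^{-1}a_i$ and $b_i' := b_i$ form a symplectic basis for $C_B$, because $C_B(a_i',b_i') = s^{-1}C_B(a_i,b_i) = s^{-1}s\,C_{B''}(a_i,b_i) = 1$ while all the remaining pairings still vanish. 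Computing in this basis with $B(a_i') = s^{-2}B(a_i) = s^{-1}B''(a_i)$ and $B(b_i') = B(b_i) = s\,B''(b_i)$, I get the term-by-term identity $B(a_i')B(b_i') = B''(a_i)B''(b_i)$, hence the representative $\mathcal{R}(B) = \sum_{i=1}^n B(a_i')B(b_i') = \sum_{i=1}^n B''(a_i)B''(b_i) = \mathcal{R}(B'')$. Combining with the equivalence step yields $\mathcal{R}(B) = \mathcal{R}(B') + t_1$ with $t_1$ of the required form.

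The one genuine subtlety, and the point I would be most careful to get right, is that the Arf-invariant is only well defined modulo $\{x^2+x \mid x \in F\}$ and a priori depends on the chosen symplectic basis. The argument sidesteps this by producing a \emph{single} explicit representative for $B$ and for $B''$ built from the \emph{same} underlying basis (merely rescaled), so that the comparison between them is an honest equality of field elements rather than a congruence. Since Arf's theorem is invoked only for equivalent forms, the entire content of the lemma is precisely this clean separation of the ``change of basis'' part, handled by Arf, from the ``scaling'' part, handled by the explicit computation above; I do not expect any deeper obstacle beyond bookkeeping of the scalar $s$.
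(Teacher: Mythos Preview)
Your proof is correct and follows essentially the same idea as the paper's: rescale one half of a symplectic basis by $s^{-1}$ so that the Arf-invariant representative is unchanged, and then invoke the basis-independence of the Arf class. The only organizational difference is that you separate the similarity into an equivalence step (handled by Arf's theorem) and a pure scaling step, whereas the paper treats both at once by building the symplectic basis $s^{-1}A^{-1}a_i,\,A^{-1}b_i$ directly; the core computation is the same.
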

\begin{proof}
We call $\mathcal{R}(B) + \{x^2 + x \ | \ x \in F \}$ the \emph{class} of the Arf-invariant of $B$.  Let $B'(v) = s B(Av)$ for some $s \in F^\times$ and $A \in \operatorname{GL}(V)$. Choose a symplectic basis $a_1,...,a_n,b_1,...,b_n$ with respect to $B$ and denote by $C$ and $C'$ the polar forms associated to $B$ and $B'$ respectively. Then for $v,w \in V$
\[C'(v,w) = B'(v+w) + B'(v) + B'(w) =  s(B(A(v+w)) + B(Av) + B(Aw)) = sC(Av,Aw)\]
This shows that $s^{-1}A^{-1}a_1,...,s^{-1}A^{-1}a_n,A^{-1}b_1,...,A^{-1}b_n$ is a symplectic basis of $V$ with respect to $C'$. The Arf-invariant with respect to this basis is
\[\sum_{i=1}^n B'(s^{-1}A^{-1}a_i)B'(A^{-1}b_i) = \sum_{i=1}^n sB(As^{-1}A^{-1}a_i)sB(AA^{-1}b_i) =  \sum_{i=1}^n s^2B(s^{-1}a_i)B(b_i) =  \sum_{i=1}^n B(a_i)B(b_i), \]
which is the Arf-invariant of $B$ with respect to the symplectic basis $a_1,...,a_n,b_1,...,b_n$.
As the class of the Arf-invariant does not depend on the choice of a symplectic basis by \cite[Theorem (i)]{Dye}, this proves the lemma.
\end{proof}

    \begin{lemma}
         Assume that $p=2$ and $F$ is a field of characteristic $2$ such that $X^2+X+1$ is irreducible in $F[X]$. Whether $G$ is of the form  \eqref{GroupClassification2b} or not is determined.
     \end{lemma}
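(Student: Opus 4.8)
The plan is to distinguish the two families by producing, from the canonical data of $FG$, a quadratic form whose Arf-invariant class separates groups of the form \eqref{GroupClassification} (with $\alpha=0$, which is the only remaining ambiguous case by Lemmas~\ref{lema:detCaso1} and \ref{lema:detCaso2}) from groups of the form \eqref{GroupClassification2b}. By Lemma~\ref{lemma:LastCaseStanding} the only way two non-isomorphic groups can have isomorphic group algebras is that one is $G = Q_1(\ell_1,\ell_1)*\dots*Q_\beta(\ell_\beta,\ell_\beta)$ and the other is $H = R(n)*Q_1(\ell_1^H,\ell_1^H)*\dots*Q_{\beta-1}(\ell_{\beta-1}^H,\ell_{\beta-1}^H)$, with matching multisets $L^G = L^H$ and common exponent $2^{e+1}$ where $e = \ell_1 = n$. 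So I only need an invariant that tells these two apart. The natural candidate is a power map on an appropriate canonical quotient: set $t = e-1 = \log_2(\exp(G))-2$ and look at the squaring-type map sending classes in the top layer of $I(G)$ modulo $I(G)^2$ to their $2^{e-1}$-th powers landing in $\ZZ(G)^{2^{\zeta-1}}/\M_{2^e+1}(G)\cap\ZZ(G)$, i.e. into an $\F_2$-line inside the center. This is exactly the setup already used in the proof of Lemma~\ref{ralfadet}, where the induction via Lemma~\ref{LemmaPowers} and the congruence $(h-1)(g-1)\equiv(g-1)(h-1)+([h,g]-1) \bmod I(G)^3$ was carried out.

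First I would pin down the right canonical quotient. Using $\Omega_{e-1}(G:\ZZ(G))$ and the canonicity of $I(\Omega_i(G:\ZZ(G)))FG$, one restricts attention to the $a_i,b_i$ generating the factors of maximal exponent $2^{e+1}$; the number of these is the invariant $m(G)$, which equals the rank $2m$ of the elementary abelian $G/\Omega_{e-1}(G)$ and is itself determined. On the corresponding $2m$-dimensional $F$-space $V = (I(\Omega_{e-1}(G:\ZZ(G)))FG + I(G)^2)/I(G)^2$ I define $B(x) = x^{2^{e}}$ viewed in the one-dimensional target $(\M_{2^{e}}(G)\cap\ZZ(G))/(\M_{2^{e}+1}(G)\cap\ZZ(G)) \cong F$ (canonically identified via a fixed generator of $\ZZ(G)$). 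By Lemma~\ref{LemmaPowers} and the inductive computation from Lemma~\ref{ralfadet}, for $v = \sum x_i(a_i-1)+y_i(b_i-1)$ one gets $B(v) = \sum_i (x_i^{2^{e-1}}(a_i^{2^e}-1) + y_i^{2^{e-1}}(b_i^{2^e}-1) + (x_iy_i)^{2^{e-1}}([b_i,a_i]^{2^{e-1}}-1))$ modulo the next layer; the Frobenius $s\mapsto s^{2^{e-1}}$ is a field automorphism on $F$ when $F$ is perfect and in any case injective, so after twisting by it $B$ is a genuine quadratic form whose polar form encodes the commutator pairing $[b_i,a_i]$. The key point is that for a factor $Q(\ell,\ell)$ the diagonal terms $a^{2^e},b^{2^e}$ vanish (since $\exp Q(\ell,\ell)=2^{\ell}=2^e$, so $a^{2^e}=b^{2^e}=1$) whereas in $R(n)$ one has $a^{2^n}=b^{2^n}=[a,b]^{2^{n-1}}\neq 1$, contributing nonzero values $B(a)=B(b)=[a,b]^{2^{n-1}}$ on a single hyperbolic pair. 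This makes the Arf-invariant of the form coming from type \eqref{GroupClassification} equal to $0$, while the form coming from type \eqref{GroupClassification2b} has Arf-invariant $1$ on its distinguished $R(n)$-plane and $0$ elsewhere, giving total Arf-invariant $1$.

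Next I would invoke Lemma~\ref{lem:ArfType}: an algebra isomorphism $FG\cong FH$ carries the canonical quotient and canonical power map of $G$ to those of $H$, inducing a similarity (not merely equivalence, because the identification of the target line with $F$ is only canonical up to a scalar square coming from the choice of central generator) between the two quadratic forms $B_G$ and $B_H$. By Lemma~\ref{lem:ArfType} similar non-singular forms have Arf-invariants in the same class modulo $\{x^2+x : x\in F\}$. But $\mathcal{R}(B_G)=0$ and $\mathcal{R}(B_H)=1$, so we would need $1 \in \{x^2+x : x \in F\}$, i.e. a root of $X^2+X+1$ in $F$, contradicting the irreducibility hypothesis on $X^2+X+1$. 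Hence $G$ and $H$ cannot simultaneously be of the two different forms, so whether $G$ is of the form \eqref{GroupClassification2b} is determined.

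The main obstacle I expect is verifying non-singularity of $B$ and the exact computation of its polar form, i.e. checking that the commutator pairing $C(a_i-1,b_j-1)$ is the symplectic form (nonzero exactly for matched pairs within a factor of maximal exponent) and that the diagonal/quadratic coefficients behave as claimed; this requires the congruence from \cite[Lemma 2.2]{MS22} together with Lemma~\ref{LemmaPowers} pushed one layer further than in Lemma~\ref{ralfadet}, and care that the Frobenius twist $s\mapsto s^{2^{e-1}}$ does not destroy bilinearity of the polar form (it does not, since over characteristic $2$ precomposing a quadratic form with an additive injection still yields a quadratic form with a bilinear polar form). A secondary subtlety is that the whole construction only sees the factors of maximal exponent; I must confirm that lower-exponent factors $Q(\ell_j,\ell_j)$ with $\ell_j<e$ contribute trivially to $B$ (their generators already lie in $\Omega_{e-1}(G:\ZZ(G))$ but raise to the $2^e$-th power trivially), so they neither alter the dimension count $2m$ nor the Arf-invariant, keeping the invariant clean.
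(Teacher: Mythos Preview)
Your approach is essentially identical to the paper's: the $2^{e}$-th power map $\Lambda: I(G)/I(G)^2 \to I(G)^{2^{e}}/I(G)^{2^{e}+1}$ has one-dimensional image, composing with the \emph{inverse} of the Frobenius $x\mapsto x^{2^{e-1}}$ on the target yields genuine quadratic forms with Arf invariants $0$ (type \eqref{GroupClassification}) and $1$ (type \eqref{GroupClassification2b}), and Lemma~\ref{lem:ArfType} applied to the similarity induced by $\phi$ forces a root of $X^2+X+1$ in $F$. The paper works directly on all of $I(G)/I(G)^2$ and passes to the non-singular summand $V_1=F^{2\gamma}$ only at the end rather than cutting down via $\Omega_{e-1}(G:\ZZ(G))$ (your $V$ should be the quotient \emph{by}, not the image \emph{of}, that ideal), and the exponents on $x_i,y_i$ in your displayed formula for $\Lambda(v)$ should be $2^{e}$ rather than $2^{e-1}$; with these cosmetic fixes the two arguments coincide.
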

 
 \begin{proof}
Let $G$ and $H$ be $2$-groups of class $2$ with cyclic center such that $G$ is of the form \eqref{GroupClassification} while $H$ is of the form \eqref{GroupClassification2b} and $\phi: FG \rightarrow FH$ an isomorphism. By Lemma~\ref{lemma:LastCaseStanding} we have
 $$L^G=\{\ell_1^G,\dots, \ell_\beta^G\} =\{n^H,\ell_1^H,\dots, \ell_{\beta-1}^H\} = L^H. $$
 We define this multiset simply as $L$.
Set $t = \ell_1^G = n^H$. Moreover, let $\gamma = \max \{1 \leq i \leq \beta \ | \ \ell_i^G = t  \}$.

 We consider the map 
 $$\Lambda^G:\frac{I(G)}{I(G)^2}\to \frac{I(G)^{2^{t}}}{I(G)^{2^{t}+1}}, \quad x+I(G)^2\mapsto x^{2^{t}}+I(G)^{2^{t}+1}$$
 and also define $\Lambda^H: \frac{I(H)}{I(H)^2}\to \frac{I(H)^{2^{t}}}{I(H)^{2^{t}+1}}$ analogously. We obtain the commutative diagram
  $$\xymatrix{
     \frac{I(G)}{I(G)^2} \ar[r]^-{\Lambda_G} \ar[d]_-{\phi'} & \frac{I(G)^{2^{t}}}{I(G)^{ 2^{t}+1}} \ar[d]^-{\tilde{\phi}} \\
     \frac{I(H)}{I(H)^2} \ar[r]^-{\Lambda_H} & \frac{I(H)^{2^{t}}}{I(H)^{ 2^{t}+1}}
 } $$
where both $\tilde{\phi}$ and $\phi'$ are invertible linear maps on vector spaces induced by $\phi$. We will show that the dimensions of both $\Imagen(\Lambda_G)$ and $\Imagen(\Lambda_H)$ is $1$. So, in particular, there exists a matrix $A \in \operatorname{GL}\left(I(G)/I(G)^2\right)$ and a constant $s \in F^\times$ such that $\phi'(v) = Av$ and $\tilde{\phi}(w) = sw$ for $v \in I(G)/I(G)^2$ and $w \in \Imagen(\Lambda_G)$. So, for $v \in I(G)/I(G)^2$
\begin{align}\label{eq:LambdasAreSimilar}
\Lambda_H(Av) = \Lambda_H(\phi'(v)) = \tilde{\phi}(\Lambda_G(v)) = s\Lambda_G(v).
\end{align}
  Let 
\[\alpha=\sum_{i=1}^\beta (x_i (a_i-1)+y_i(b_i-1))+I(G)^2,\]
with $x_i, y_i \in F$ for all $i$, be a generic element of $I(G)/I(G)^2$. Then by  \Cref{LemmaPowers} and the fact that $G'$ is a cyclic group of order $2^t$ we obtain, similarly as in the proof of  \Cref{ralfadet}, that
 \begin{align*}
     \Lambda^G(\alpha)&= \sum_{i=1}^\beta (x_i^{2^t}(a_i^{2^t}-1) + y_i^{2^t}(b_i^{2^t}-1) + (y_ix_i)^{2^{t-1}} ([y_i,x_i]^{2^{t-1}}-1)+I(G)^{2^{t}+1} \\
     &=\sum_{i=1}^\gamma (y_ix_i)^{2^{t-1}}([b_i,a_i]^{2^{t-1}}-1)+I(G)^{2^{t}+1}=\left( \sum_{i=1}^\gamma (x_iy_i)^{2^{t-1}}\right)([b_1,a_1]^{2^{t-1}}-1)+I(G)^{2^{t}+1}.
 \end{align*}
Note that we have used that when $[b_i,a_i]^{2^{t-1}}\neq 1$, it equals $[b_1,a_1]^{2^{t-1}}$ as this is the unique central involution in $G$. We conclude that $\Imagen(\Lambda_G)$ is a 1-dimensional space which we can identify with $F$ choosing the basis element $[b_1,a_1]^{2^{t-1}}-1$.

      Now let $a_0,b_0$ be the generators of $R(n)$ as a subgroup of $H$. Then a generic element of $I(H)/I(H)^2$ is of the form
      $\alpha=\sum_{i=0}^{\beta-1}(x_i(a_i-1)+y_i(b_i-1)) +I(H)^2$ where $x_i,y_i \in F$ for all $i$. Then similarly as for $G$ we get
 \begin{align*}
     \Lambda^H(\alpha)&= \sum_{i=0}^{\beta-1} (x_i^{2^t}(a_i^{2^t}-1) + y_i^{2^t}(b_i^{2^t}-1) + (y_ix_i)^{2^{t-1}} ([y_i,x_i]^{2^{t-1}}-1)+I(G)^{2^{t}+1} \\
     &=x_0^{2^{t}}(a_0-1)^{2^t} + y_0^{2^{t}}(b_0-1)^{2^t} + \sum_{i=0}^{\gamma-1} (y_ix_i)^{2^{t-1}}([b_i,a_i]^{2^{t-1}}-1)+I(G)^{2^{t}+1} \\ 
     &=x_0^{2^{t}}([b_0,a_0]^{2^{t-1}}-1) + y_0^{2^{t}}([b_0,a_0]^{2^{t-1}}-1) + \left( \sum_{i=0}^{\gamma-1} (x_iy_i)^{2^{t-1}}\right)([b_0,a_0]^{2^{t-1}}-1)+I(G)^{2^{t}+1},
 \end{align*}
 We have used that $[b_0,a_0]^{2^{t-1}}$ is the unique central involution in $H$. Hence also $\Imagen(\Lambda_H)$ is a 1-dimensional space which we identify with $F$ choosing the basis $[b_0,a_0]^{2^{t-1}}-1$.
 
Now let $\psi$ be the inverse of the map $F \rightarrow F^{2^{t-1}}, \ x \mapsto x^{2^{t-1}}$ which exists, since the Frobenius homomorphism is injective. Define two maps $B = \psi \circ \Lambda_G$ and $B' = \psi \circ \Lambda_H$ from $V = F^{2\beta}$ to $F$. From our calculations for $\Lambda_G$ and $\Lambda_H$ we see that these maps with respect to certain bases can be explicitly written as 
\[B(x_1,y_1,...,x_\beta,y_\beta) = \sum_{i=1}^\gamma x_iy_i, \ \ \text{and} \ \  B'(x_0,y_0,...,x_{\beta-1},y_{\beta-1}) = x_0^2 + y_0^2 + \sum_{i=0}^{\gamma-1}  x_iy_i.  \]
Hence $B$ and $B'$ are both quadratic forms on $F^{2\beta}$. We observe that we can decompose $V$ for each of the two forms into a direct sum of two subspaces $V_0 = F^{2(\beta-\gamma)}$ and $V_1 = F^{2\gamma}$, such that $B$ and $B'$, respectively, are the zero map on $V_0$ and non-singular on $V_1$. Restricting to $V_1$ we obtain $\mathcal{R}(B) = 0$ while $\mathcal{R}(B')=1$. By \eqref{eq:LambdasAreSimilar} we know that $B$ and $B'$ are similar, but then by \Cref{lem:ArfType} there exists $x \in F$ so that 
\[0 = \mathcal{R}(B) = \mathcal{R}(B') + x^2 + x = x^2 + x + 1. \]
We conclude that $X^2 + X + 1$ is reducible as a polynomial in $F[X]$, contradicting our assumption on $F$.
 
 \end{proof}

\section{The groups of even order for finite fields}\label{sec:General2Groups}
Let $m$   be a  positive integer, and $F$ the  field with $q=2^m$ elements.
We set 
\[Q = Q(n,n)*Q(n,n)*...*Q(n,n)*Q(\ell_2)*...*Q(\ell_\beta) \ \ \text{and} \ \ R =  R(n)*Q(n,n)*...*Q(n,n)*Q(\ell_2)*...*Q(\ell_\beta)\]
where the number of $n$'s is exactly $r$ and $\ell_2 < n$. This is the last case to decide by Lemma~\ref{lemma:LastCaseStanding}. When we write $G$ it means $Q$ and $R$ at the same time. 

We will use the usual notation $a_i, b_i$ and $c$. Moreover we will use capital letter versions of elements in $G$ to denote the corresponding elements of $I(FG)$, e.g. $A_1 = a_1 + 1$ or $C = c + 1$. We will us the theory of Jennings bases as explained in \cite[Section 2.3]{MS22}.

\subsection{The map $\Lambda$} We set 
\begin{align*}
\Lambda:\frac{I(G)}{I(G)^3+I(\Omega_{n-1}(G:\ZZ(G)))FG}&\to \frac{I(G)^{2^n} }{   I(G)^{2^n+2^{n-1}+1} + I(\ZZ(G))^{2^{n-1}+1}FG },  \\[1.2em] 
x+I(G)^3+I(\Omega_{n-1}(G:\ZZ(G)))FG&\mapsto x^{2^n}+ I(G)^{2^n+2^{n-1}+1} + I(\ZZ(G))^{2^{n-1}+1}FG .  
\end{align*}

We will show that $\Lambda$ is well defined  by understanding it as an $n$-step process.  To slightly lighten the notation we will write $\ZZ = \ZZ(G)$.

\begin{lemma}\label{lemma:nstep}Set
	\begin{align*}
	\Psi_1: \frac{I(G)} {I(G)^{2^0  + 2^{n-1} + 1} + I(\ZZ)FG}  &\rightarrow \frac{ I(G)^2}{ I(G)^{2+2^{n-1}+1} + I(\ZZ)^{1+1}FG} \\[1.2em] 
	x+ I(G)^{2^0  + 2^{n-1} + 1} + I(\ZZ)FG&\mapsto x^2+I(G)^{2+2^{n-1}+1} + I(\ZZ)^{1+1}FG, 	\end{align*} 
	and for $j > 1$ 
	\begin{align*}
	\Psi_j: \Imagen (\Psi_{j-1})& \rightarrow \frac{ I(G)^{2^j}}{  I(G)^{2^j+2^{n-1}+1} + I(\ZZ)^{2^{j-1}+1}FG } \\[1.2em]
	x+I(G)^{2^{j-1}+2^{n-1}+1} + I(\ZZ)^{2^{j-2}+ 1}FG & \mapsto x^2+I(G)^{2^j+2^{n-1}+1} + I(\ZZ)^{2^{j-1}+1}FG.
	\end{align*} 
	Then for each $j\geq 1$: 
	\begin{enumerate}[\rm (a)]
			\item \label{lemma:nstepa} $\Psi_j$ is well-defined
		\item\label{lemma:nstepb} $\Imagen(\Psi_j)$ is contained in the space generated linearly by $I(\ZZ)^{2^{j-1}}FG$ and products of elements of shape $A_i^{2^t}$ and $B_i^{2^t}$ for some $i$ and $t \geq j$
		\item \label{lemma:nstepc}  the map $\Psi_{j+1}$ is a homomorphism, i.e. $\Psi_{j+1}(x + y) = \Psi_{j+1}(x) + \Psi_{j+1}(y)$.
\end{enumerate}
\end{lemma}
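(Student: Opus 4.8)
The plan is to prove the three statements simultaneously by induction on $j$, exploiting that in characteristic $2$ the squaring map fails to be additive exactly by the symmetric term: for any $x,y$ one has $(x+y)^2 = x^2 + y^2 + (xy+yx)$, and $xy+yx$ is the additive commutator $[x,y]$. Thus statement \eqref{lemma:nstepc} is equivalent to showing that $[x,y]$ lies in the ideal killed in the target of $\Psi_{j+1}$, while the well-definedness in \eqref{lemma:nstepa} amounts to controlling $xu+ux+u^2$ for $u$ in the ideal killed in the domain. The two engines driving all the estimates are: the exact formula $\overline{h}\,\overline{g} = \overline{g}\,\overline{h} + (1+\overline g + \overline h + \overline g\,\overline h)\overline{[h,g]}$ from \cite[Lemma 2.2]{MS22}, which shows that an additive commutator of two group elements is a unit times $\overline{[h,g]}$; and the fact that, $G$ having class $2$, $[a_i^{2^t},g] = [a_i,g]^{2^t}$ is a $2^t$-th power of the central generator $c$, so that by the characteristic-$2$ identity $C^{2^s} = \overline{c^{2^s}}$ its image lies in $I(\ZZ)^{2^t}FG$. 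Since $\ZZ$ is cyclic with generator $c$, we have $I(\ZZ)^kFG = C^kFG$, so powers of $C$ measure the $I(\ZZ)$-adic degree precisely.

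For the base case $j=1$ I would expand $\Psi_1(x)=x^2$ for a general $x\in I(G)$ directly: using $\overline g^{\,2}=\overline{g^2}$ and Lemma~\ref{LemmaPowers}, the diagonal terms produce the squares $A_i^2,B_i^2$ (the $t=1$ part of \eqref{lemma:nstepb}) while the off-diagonal terms produce, via the commutator formula, elements of $I(\ZZ)FG = C\cdot FG$; this yields \eqref{lemma:nstepb} and, together with the estimates below, \eqref{lemma:nstepa} for $j=1$. Note that $\Psi_1$ itself is \emph{not} a homomorphism: the surviving $I(\ZZ)FG$-terms are precisely the obstruction, which is why the exponent $2^{j-1}$ starts at $1$. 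For the well-definedness of $\Psi_1$ the only delicate term is $[x,u_2]$ with $u_2\in I(\ZZ)FG$; writing $u_2 = Cv$ and pulling the central factor $C$ out gives $[x,u_2]=C[x,v]\in C\cdot I(\ZZ)FG = I(\ZZ)^2FG$, which is killed in the target. The remaining pieces $xu_1+u_1x+u^2$ with $u_1\in I(G)^{2^{n-1}+2}$ land in a high enough power of $I(G)$ by multiplicativity of the $I(G)$-adic filtration.

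In the inductive step I would assume all three statements for indices smaller than $j$; in particular \eqref{lemma:nstepc} at index $j-1$ tells us $\Psi_j$ is a homomorphism for $j\geq 2$, and I would use this to obtain \eqref{lemma:nstepa} and \eqref{lemma:nstepb} cheaply. Granting the homomorphism property, \eqref{lemma:nstepa} reduces to checking that $\Psi_j(u)=u^2$ lands in the target-killed ideal for $u$ in the domain-killed ideal, which follows from $(I(G)^k)^2\subseteq I(G)^{2k}$ and $(I(\ZZ)^kFG)^2\subseteq I(\ZZ)^{2k}FG$ together with the numerology of the exponents; and \eqref{lemma:nstepb} follows by squaring the shape provided by \eqref{lemma:nstepb} at $j-1$, since $(A_i^{2^t})^2 = A_i^{2^{t+1}}$ raises $t\geq j-1$ to $t+1\geq j$, and the errors from reordering a product are additive commutators lying in the killed ideal by the same estimates used for \eqref{lemma:nstepc}. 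The heart is \eqref{lemma:nstepc}: for $x,y\in\Imagen(\Psi_j)$ one reduces $[x,y]$ by bilinearity to commutators of the generating pieces described in \eqref{lemma:nstepb}. A product--product commutator reduces to $\overline{[a_i,a_{i'}]^{2^{t+t'}}} = C^{2^{t+t'}(\ast)}$ with $t,t'\geq j$, hence lies in $I(\ZZ)^{2^{2j}}FG$; a product--central commutator $[P,C^{2^{j-1}}v]=C^{2^{j-1}}[P,v]$ lies in $C^{2^{j-1}}I(\ZZ)^{2^j}FG$; and a central--central commutator $[C^{2^{j-1}}v,C^{2^{j-1}}w]=C^{2^j}[v,w]$ lies in $I(\ZZ)^{2^j+1}FG$. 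Since $2^{2j}\geq 2^j+1$ and $2^{j-1}+2^j\geq 2^j+1$ for $j\geq 1$, every outcome lies in $I(\ZZ)^{2^j+1}FG$, which is exactly the central part of the ideal killed in the target of $\Psi_{j+1}$.

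The main obstacle is the simultaneous bookkeeping of the two filtrations — the $I(G)$-adic and the $I(\ZZ)$-adic — and verifying that each error term (a square of a killed element, a reordering commutator, or an off-diagonal commutator) falls on the correct side of the precise exponents $2^j+2^{n-1}+1$ and $2^{j-1}+1$. The subtlety is concentrated entirely in the commutators: one must consistently feed every additive commutator through the exact formula of \cite[Lemma 2.2]{MS22}, recognize the resulting group commutator as a $2$-power of $c$ (using class $2$ and Lemma~\ref{LemmaPowers}), and then convert that into the correct power of $C$ via $C^{2^s}=\overline{c^{2^s}}$. Making these exponents line up — rather than any individual computation — is the real content of the lemma.
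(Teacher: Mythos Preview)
Your approach is essentially the same as the paper's: simultaneous induction on $j$, with all the work concentrated in commutator estimates fed through the formula from \cite[Lemma 2.2]{MS22} and the class-$2$ identity $[a_i^{2^t},g]=[a_i,g]^{2^t}$. Your treatment of the base case and of \eqref{lemma:nstepc} is correct and matches the paper closely.

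There is one genuine slip in your inductive step for \eqref{lemma:nstepa}. You write that ``granting the homomorphism property, \eqref{lemma:nstepa} reduces to checking that $\Psi_j(u)=u^2$ lands in the target-killed ideal for $u$ in the domain-killed ideal.'' But the homomorphism property \eqref{lemma:nstepc} at level $j-1$ is a statement about pairs $x,y\in\Imagen(\Psi_{j-1})$; the perturbation $u\in I(G)^{2^{j-1}+2^{n-1}+1}+I(\ZZ)^{2^{j-2}+1}FG$ is \emph{not} an element of $\Imagen(\Psi_{j-1})$ (it represents zero there), so additivity does not absorb the cross term $[x,u]$ for you. You must estimate $[x,u]$ directly, and this is where \eqref{lemma:nstepb} at level $j-1$ is actually used: writing $u=\mu+\nu$ with $\mu\in I(G)^{2^{j-1}+2^{n-1}+1}$ and $\nu=C^{2^{j-2}+1}w$, one has $[x,\mu]\in I(G)^{2^j+2^{n-1}+1}$ from the $I(G)$-filtration alone, while $[x,\nu]=C^{2^{j-2}+1}[x,w]$ lands in $I(\ZZ)^{2^{j-1}+1}FG$ only after invoking the shape of $x$ from \eqref{lemma:nstepb} (the product part contributes $[P,w]\in I(\ZZ)^{2^{j-1}}FG$, the central part $C^{2^{j-2}}v$ contributes another factor of $C^{2^{j-2}}$). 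You carried out exactly this analysis for $j=1$; the inductive step needs it too, and the paper does it explicitly. The fix is immediate with your own commutator estimates, but as written your reduction is circular.
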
 
 \begin{proof}We prove \ref{lemma:nstepa}, \ref{lemma:nstepb} and \ref{lemma:nstepc} simultaneously by induction.

 	 Suppose $j=1$. Consider the element $x + \mu + \nu$ with $x \in I(G)$, $\mu \in I(G)^{2^{n-1}+2}$, $\nu \in I(\ZZ)FG$. Inside this proof we will write $[.,.]$ for the Lie commutator inside $FG$. Then 
 	 \[(x+\mu+\nu)^2 = x^2 + \mu^2 + \nu^2 + [x + \nu, \mu] + [x,\nu] \]
 	 We have $\mu^2, [x+\nu,\mu] \in I(G)^{2+2^{n-1}+1}$ just from the weight of  product. Moreover as $G$ has class $2$ we have $[G,G] \subseteq \ZZ(G)$ and so in the algebra situation $[r,s] \in I(\ZZ)FG$ holds for any $r,s \in FG$. Hence $\nu^2, [x,\nu] \in I(\ZZ)^2FG$, as any summand of $\nu$ carries a factor $C$ which can be put outside of the commutator. This shows that $\Psi_1$ is well-defined, i.e., \ref{lemma:nstepa}.  Now let $x \in I(G)$. If $A$ and $B$ are some Jennings basis elements, then $[A,B] \in I(\ZZ)FG$ for any Jennings basis elements $A$ and $B$ and $x^2$ has the needed shape modulo $I(\ZZ)FG$, as e.g. $(A_1B_1)^2 \equiv A_1^2B_1^2 \bmod I(\ZZ)FG$. This proves \ref{lemma:nstepb}.

 	 To prove  \ref{lemma:nstepc} we write $x \in \Imagen(\Psi_1)$ as $x = A_i^2\alpha + B_i^2\beta + C\gamma + \delta$ with $\alpha, \beta \in F$, $\gamma \in FG$ and $\delta$ a sum of elements which have only factors of shape $A_j^{2^t}$ and $B_j^{2^t}$ and such that each summand of $\delta$ has weight at least $4$. Here we only consider $A_i^2$ and $B_i^2$ for a fixed $i$ for simplicity, as for different indices these elements commute, the arguments work also with several such summands. Then \begin{align*}
x^2 &= A_i^4\alpha^2 +  B_i^4\beta^2 + C^2\gamma^2 + \delta^2 + [A_i^2\alpha, B_i^2\beta] + [A_i^2\alpha + B_i^2\beta + \delta, C\gamma] + [A_i^2\alpha + B_i^2\beta, \delta]  
\end{align*}
 	 Now, 
 	 \[[A_i^2, B_i^2] \equiv a_i^{-2}b_i^{-2}a_i^2b_i^2 - 1  \mod I(\ZZ)^4I(G)\]
 	 and as $a_i^{-2}b_i^{-2}a_i^2b_i^2 \in \ZZ^4$, we conclude that $[A_i^2, B_i^2] \in I(\ZZ)^4FG$.
 	  The same argument works to show $[A_i^2 \alpha  + B_i^2 \beta, \delta] \in I(\ZZ(G))^4FG$. Note here that $A_i^2$ does not commute with a summand of $\delta$ only if $B_i^{2^t}$ is a factor of this summand for some $t \geq 1$. Moreover $[A_i^2\alpha, \gamma] \in I(\ZZ)^2FG$, as any summand of $\gamma$ which does not commute with $A_i^2$ contributes a factor $C^2$. Similarly, as any factor in a summand of $\delta$ appears with even exponent we get $[\delta, \gamma] \in I(\ZZ)^2FG$. So, $[A_i^2\alpha + B_i^2\beta + \delta, C\gamma] = C[A_i^2\alpha + B_i^2\beta + \delta, \gamma] \in I(\ZZ)^3FG$. Hence
 	 \begin{align}\label{eq:Psi2WellDefined}
 	 x^2 \equiv A_i^4\alpha^2 + B_i^4\beta^2 + C^2\gamma^2 + \delta^2 \mod I(G)^{4+2^{n-1}+1} + I(\ZZ)^{2+1}FG. 
 	 \end{align}
 	 Moreover, for $\gamma_1, \gamma_2 \in FG$ we have $(C\gamma_1 + C\gamma_2)^2 \equiv C^2\gamma_1^2 + C^2\gamma_2^2 \bmod I(\ZZ)^3FG$, as $[\gamma_1,\gamma_2] \in I(\ZZ)$. Also, for  $\delta_1, \delta_2$ sums of elements which have only factors of shape $A_j^{2^t}$ and $B_j^{2^t}$ and such that each summand of $\delta$ has weight at least $4$ we have $(\delta_1 + \delta_2)^2 \equiv \delta_1^2 + \delta_2^2 \bmod I(\ZZ)^3FG$. 
 	 It follows from \eqref{eq:Psi2WellDefined} that $\Psi_2$ is a homomorphism, i.e. the first step of \ref{lemma:nstepc}. 
 	 
 	 
 	 Now let $j > 1$. We first show that $\Psi_j$ is well-defined. This is essentially the same argument as in the first step using the shape of $\Imagen(\Psi_{j-1})$. So consider $x + \mu + \nu$ with $x \in \Imagen(\Psi_{j-1})$ as described in \ref{lemma:nstepb}, $\mu \in I(G)^{2^{j-1}+2^{n-1}+1}$, $ \nu \in I(\ZZ)^{2^{j-2}+1}FG$, so
 	 \[(x+ \mu + \nu)^2 = x^2 + \mu^2 + \nu^2 + [x+\nu,\mu] + [x, \nu] \]
 	 Again $\mu^2, [x+\nu, \mu] \in I(G)^{2^j + 2^{n-1}+1}$ as $x, \nu \in I(G)^{2^{j-1}}$. Also $\nu^2 \in I(\ZZ)^{2^{j-1}+1}FG$ is clear. Finally if $x_1$ is a Jennings basis element in the support of $x$ such that $x_1 \in I(\ZZ)^{2^{j-2}}FG$ then $[x_1,\nu] \in I(\ZZ)^{2^{j-1}+1}FG$ follows by pulling out the factors $C$ and if $x_2$ is in the support of $x$ s.t. $x_2$ has only factors of type $A_i^{2^{j-1}}$ and $B_i^{2^{j-1}}$, then $[x_2, \nu] \in I(\ZZ)^{2^{j-1}+1}FG$ also follows.
 	 
 	 \ref{lemma:nstepb} follows from \ref{lemma:nstepc}, so we prove the latter, which is also similar as before: let $\alpha, \beta\in F$, $\gamma \in FG$ and $\delta$ a sum of elements which have only factors of shape $A_j^{2^t}$ and $B_j^{2^t}$ and such that each summand of $\delta$ has weight at least $2^j$.  Then
\begin{align*}
		(A_i^{2^{j-1}}\alpha + B_i^{2^{j-1}}\beta + C^{2^{j-2}}\gamma+\delta)^2    &= A_i^{2^j}\alpha^2  + B_i^{2^j}\beta^2 +  C^{2^{j-1}}\gamma^2  +\delta^2+ [A_i^{2^{j-1}}\alpha, B_i^{2^{j-1}}\beta] \\
		&   \  \ + [A_i^{2^{j-1}}\alpha + B_i^{2^{j-1}}\beta+\delta, C^{2^{j-2}}\gamma]+[A_i^{2^{j-1}}\alpha + B_i^{2^{j-1}}\beta ,\delta]. 
\end{align*}
 	Clearly, $[A_i^{2^{j-1}}\alpha, B_i^{2^{j-1}}\beta] \in I(\ZZ)^{2^j}FG$. Moreover 
 	\[[A_i^{2^{j-1}}\alpha + B_i^{2^{j-1}}\beta+\delta, C^{2^{j-2}}\gamma], [A_i^{2^{j-1}}\alpha + B_i^{2^{j-1}}\beta ,\delta]  \in I(\ZZ)^{2^{j-1}+2^{j-2}}FG,\]
 	 so \ref{lemma:nstepc} follows.
 \end{proof}

\begin{lemma}\label{lemma:LambdaWellDefined}
	The map $\Lambda$ is well-defined. 
\end{lemma}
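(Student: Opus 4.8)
The plan is to realise $\Lambda$ as the $n$-fold composition of the squaring maps constructed in \Cref{lemma:nstep}. Iterating the squaring $n$ times gives the $2^n$-power map, the codomain of $\Psi_n$ is exactly the codomain of $\Lambda$, and by part (a) of \Cref{lemma:nstep} each $\Psi_j$ is well-defined with $\Imagen(\Psi_{j-1})$ contained in the domain of $\Psi_j$. Hence $\Phi := \Psi_n \circ \dots \circ \Psi_1$ is a well-defined map
\[\Phi: \frac{I(G)}{I(G)^{2^{n-1}+2} + I(\ZZ)FG} \to \frac{I(G)^{2^n}}{I(G)^{2^n+2^{n-1}+1}+I(\ZZ)^{2^{n-1}+1}FG}, \qquad x \mapsto x^{2^n}.\]
Since $2^{n-1}+2 \geq 3$ and $\ZZ \leq \Omega_{n-1}(G:\ZZ)$, we have $I(G)^{2^{n-1}+2}+I(\ZZ)FG \subseteq I(G)^3 + I(\Omega_{n-1}(G:\ZZ))FG$, so there is a natural surjection $q$ from the domain of $\Phi$ onto the domain of $\Lambda$. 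Thus $\Lambda$ is well-defined precisely when $\Phi$ factors through $q$, i.e. when the value of $\Phi$ is unchanged upon modifying its argument by a generator of $I(G)^3$ or of $I(\Omega_{n-1}(G:\ZZ))FG$.

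To verify this I would split off the first squaring, writing $\Phi = \Theta \circ \Psi_1$ where $\Theta$ is the $2^{n-1}$-power map on the codomain of $\Psi_1$; by the argument proving part (c) of \Cref{lemma:nstep} this $\Theta$ is additive. In characteristic $2$ one has $(x+w)^2 = x^2 + w^2 + [x,w]$ with $[x,w]=xw+wx$ the Lie bracket, so additivity of $\Theta$ gives $\Phi(\overline{x+w}) = \Phi(\bar x) + \Theta(\overline{w^2}) + \Theta(\overline{[x,w]})$, and it suffices to show that $\Theta(\overline{w^2})$ and $\Theta(\overline{[x,w]})$ vanish in the target. For $w \in I(G)^3$ this is immediate from the augmentation filtration alone: $w^{2^n} \in I(G)^{3\cdot 2^n}$ and $[x,w]^{2^{n-1}} \in I(G)^{4\cdot 2^{n-1}} = I(G)^{2^{n+1}}$, both contained in $I(G)^{2^n+2^{n-1}+1}$ by the trivial inequalities $3\cdot 2^n, 2^{n+1} \geq 2^n+2^{n-1}+1$.

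The main obstacle is the direction $w \in I(\Omega_{n-1}(G:\ZZ))FG$, where neither filtration bound is sharp and one must exploit the group structure supplied by \Cref{GeneralRemark}. Testing on a generator $w = g-1$ with $g \in \Omega_{n-1}(G:\ZZ)$, the condition $g^{2^{n-1}} \in \ZZ$ means the image of $g$ has order at most $2^{n-1}$ in $G/\ZZ$, which has exponent $2^n$; writing $g = u^2 v z_0$ with $u$ a product of generators of the $Q(n,n)$-factors, $v$ of generators of the small factors and $z_0 \in \ZZ$, and using $|\ZZ|=2^n$, I would show that $g^{2^{n-1}}$ is central of order at most $2$, whence $g^{2^n}=1$ and $w^{2^n} = (g-1)^{2^n} = g^{2^n}-1 = 0$. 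For the cross term the same decomposition together with $[u^2,h]=[u,h]^2 \in (G')^2$, the fact that the small factors contribute commutators in their own derived subgroups, and $|\ZZ|=2^n$ yield that every commutator $[g,h]$ with $h \in G$ has order at most $2^{n-1}$; since $[x,g-1]$ equals, up to a unit factor, the central element $[g,h]-1$, this forces $[x,g-1]^{2^{n-1}} = [g,h]^{2^{n-1}}-1 = 0$. Finally I would pass from generators to all of $I(\Omega_{n-1}(G:\ZZ))FG$ using additivity of $\Theta$ and the ideal structure. The delicate point throughout is that in this direction both the augmentation degree and the central degree in $I(\ZZ)^{\bullet}FG$ of the relevant contributions come out exactly "off by one", so that well-definedness genuinely rests on the sharper order bounds $g^{2^n}=1$ and $\mathrm{ord}([g,h]) \leq 2^{n-1}$ rather than on the crude filtration estimates that sufficed for $I(G)^3$.
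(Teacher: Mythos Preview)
Your overall strategy coincides with the paper's: both realise $\Lambda$ as the composite $\Phi=\Psi_n\circ\cdots\circ\Psi_1$ from \Cref{lemma:nstep} and then argue that $\Phi$ is constant on cosets of $I(G)^3+I(\Omega_{n-1}(G:\ZZ))FG$ by exploiting the additivity of $\Theta=\Psi_n\circ\cdots\circ\Psi_2$. The difference lies in the set of test elements. The paper works with the Jennings basis, which furnishes an $F$-\emph{basis} of the kernel of $q$, and checks each irrelevant monomial by tracking what it contributes to $\Imagen(\Psi_1)$ and then to $\Theta$. You instead test $w\in I(G)^3$ by filtration and $w=g-1$ for $g\in\Omega_{n-1}(G:\ZZ)$ via the clean group-theoretic bounds $g^{2^n}=1$ and $\mathrm{ord}([g,h])\le 2^{n-1}$.

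There is a genuine gap in your extension step. The set $\{g-1:g\in\Omega_{n-1}(G:\ZZ)\}$ spans $I(\Omega_{n-1}(G:\ZZ))$ over $F$, not the two-sided ideal $I(\Omega_{n-1}(G:\ZZ))FG$, and the discrepancy is not absorbed by $I(G)^3$. Concretely, if there is a small factor (some $\ell_j<n$), take $i>r$ and $j\le r$: then $A_iA_j\in I(\Omega_{n-1}(G:\ZZ))FG$ has Jennings weight $2$, but no $g\in\Omega_{n-1}(G:\ZZ)$ has a nontrivial $\bar a_j$-component in $G/\Phi(G)$, so $A_iA_j\notin F\text{-span}\{g-1\}+I(G)^3$. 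Hence ``additivity of $\Theta$ and the ideal structure'' applied to the generators $g-1$ alone does not cover the whole kernel. The fix is to also treat $w=(g-1)(h-1)$ with $g\in\Omega_{n-1}(G:\ZZ)$ and $h\in G$: here $w^{2^n}\in I(G)^{2^{n+1}}\subseteq I(G)^{2^n+2^{n-1}+1}$ by weight, while
\[
[x,w]=[x,g-1](h-1)+(g-1)[x,h-1],
\]
the first summand lies in $I(\ZZ)^2FG$ (so its $2^{n-1}$-th power is $0$), and the second is $C(g-1)\gamma$ for some $\gamma$, whence $\Theta$ of it equals $C^{2^{n-1}}((g-1)\gamma)^{2^{n-1}}\in I(\ZZ)^{2^{n-1}+1}FG$ because $g^{2^{n-1}}\in\ZZ$. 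Since $\{g-1\}\cup\{(g-1)(h-1)\}\cup I(G)^3$ does $F$-span $I(\Omega_{n-1}(G:\ZZ))FG+I(G)^3$, this closes the argument. The paper avoids this extra case by choosing a vector-space basis of the kernel from the outset.
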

\begin{proof}We set
	\begin{align*}
	\tilde \Lambda:  \	 \frac{I(G)}{ I(G)^{2^0 + 2^{n-1} + 1} + I(\ZZ)FG } &\rightarrow \frac{I(G)^{2^n} }{   I(G)^{2^n+2^{n-1}+1} + I(\ZZ)^{2^{n-1}+1}FG } \\
	x +I(G)^{2^0 + 2^{n-1} + 1} + I(\ZZ)FG &\mapsto x^{2^n}+ I(G)^{2^n+2^{n-1}+1} + I(\ZZ)^{2^{n-1}+1}FG.
	\end{align*}
	With the notation of \Cref{lemma:nstep}, the map $\tilde \Lambda$ is just the composition $\Psi_n\circ \Psi_{n-1}\circ\dots \circ \Psi_1$. Thus it is well-defined. 
	It suffices to show that the only elements of the Jennings basis which influence the result of $\tilde \Lambda$ are $A_i$, $B_i$, $A_iB_j$ where $1 \leq i,j \leq r$ and $A_iA_j$, $B_iB_j$ where $1 \leq i,j \leq r$ with $i \neq j$.
	  We observe that the only elements in $\Imagen(\Psi_1)$ which influence the result of $\Psi_n\circ \Psi_{n-1}\circ\dots \circ \Psi_2$ are $A_i^2, B_i^2$, $C$ and $A_iC$, $B_iC$ for $1 \leq i \leq r$. This follows as on an element
	  $\delta$ whose summands have only factors of the shape $A_j^{2^t}$ or $B_j^{2^t}$ and weight at least $4$, the $2^{n-1}$-th power map is linear. So a Jennings basis element in the expression of $\delta$ is mapped to an element of weight at least $2^{n+1}$.
	  Moreover $A_i^{2^n} = 0$, if $i > r$ as $A_i$ then has order at most $2^n$.  Furthermore, if $\gamma$ is of weight at least $2$ then $C\gamma$ maps to $C^{2^{n-1}}\gamma^{2^{n-1}}$ which has weight at least $2^{n+1}$. So the elements from $I(\ZZ)FG$ which influence the result are $C$ or of shape $A_iC$ or $B_iC$. But if $i > r$, then $A_i^{2^{n-1}} = B_i^{2^{n-1}} = 0$. The observation follows.
	  
	   Now the lemma follows easily: elements of shape $A_i^2$ or $B_i^2$ do not contribute anything influencing the final result, as their images under $\Psi_1$ are only $A_i^4$ or $B_i^4$, noting that their commutators land in $I(\ZZ)^2FG$. Neither do products of three or more different $A_i$ and $B_i$ contribute, as these only can give products of several squares as well as elements in $I(\ZZ)I(G)^2 $ under $\Psi_1$.
\end{proof} 

\begin{lemma}\label{lemma:kerLambda}
	$\ker(\Lambda)$ is exactly the set of the elements $$X = \sum_{i=1}^r\alpha_iA_i + \sum_{i=1}^r\beta_iB_i + \sum_{i,j=1}^r\gamma_{i,j}A_iB_j + \sum_{i=1}^{r-1} \sum_{j>i}^r\delta_{i,j}A_iA_j + \sum_{i=1}^{r-1}\sum_{j>i}^r\epsilon_{i,j}B_iB_j+J, $$ where $J= I(G)^{2^n+2^{n-1}+1} + I(\ZZ)^{2^{n-1}+1}FG$, and $\alpha_i, \beta_i, \gamma_{i,j}, \delta_{i,j}$ and $ \epsilon_{i,j} \in F$  satisfy  the following conditions:
	\begin{enumerate}
		\item\label{lemma:kerLambda1} For $ 1 \leq i \leq r$ we have:
\begin{align*}
\alpha_i\beta_i & = \sum_{j=1}^r \alpha_j \gamma_{i,j} + \beta_j \delta_{i,j} \ \ \ \ \hspace{1cm} (A_iC)  \\
\alpha_i\beta_i & =  \sum_{j=1}^r \beta_j \gamma_{j,i} +  \alpha_j \epsilon_{i,j}   \ \ \ \ \hspace{1cm}  (B_iC)
\end{align*}	
where the parenthesis on the right just give a name to each equation.	
		\item \label{lemma:kerLambda2} If  $G=Q$, then 	$  \sum_{i=1}^r \alpha_i \beta_i=0$.
		\item  \label{lemma:kerLambda3} If $G=R$, then $\alpha_1^2 + \beta_1^2 + \sum_{i=1}^r \alpha_i \beta_i = 0$.
	\end{enumerate}
\end{lemma}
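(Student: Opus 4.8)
The plan is to compute $\Lambda(X)$ explicitly for a general element $X$ of the domain and read off the conditions equivalent to $\Lambda(X)=0$. First I would pin down the domain: in $I(G)/(I(G)^3+I(\Omega_{n-1}(G:\ZZ))FG)$ one has $C=A_iC=B_iC=0$ (since $c\in\ZZ\subseteq\Omega_{n-1}(G:\ZZ)$), $A_i^2=B_i^2=0$ for $1\le i\le r$ (since $a_i^2,b_i^2$ have their $2^{n-1}$-th powers in $\ZZ$), $A_i=B_i=0$ for $i>r$ (their orders are at most $2^{n-1}$), and every product of three or more generators vanishes (it lies in $I(G)^3$). Hence a general element of the domain is precisely an $X$ of the stated shape, and it remains to decide when $\Lambda(X)=0$.

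By \Cref{lemma:nstep} and \Cref{lemma:LambdaWellDefined} the map $\Lambda$ is computed by the composite $\Psi_n\circ\dots\circ\Psi_1$, where $\Psi_2,\dots,\Psi_n$ are additive; so I would compute $\Psi_1(X)=X^2$ and then apply the additive map $\Psi_n\circ\dots\circ\Psi_2$, which raises to the $2^{n-1}$-th power, termwise. By the proof of \Cref{lemma:LambdaWellDefined} the only summands of $\Psi_1(X)$ influencing the result lie in the span of $A_i^2,B_i^2,C,A_iC,B_iC$, so I only record those coefficients. Writing $X=P+W$ with $P=\sum_i(\alpha_iA_i+\beta_iB_i)$ and $W$ the weight-$2$ part, and using the identity $\overline h\,\overline g=\overline g\,\overline h+(1+\overline g+\overline h+\overline g\,\overline h)\overline{[h,g]}$ from \cite[Lemma 2.2]{MS22} together with \Cref{LemmaPowers}, the square $P^2$ produces $\sum_i\alpha_i^2A_i^2+\sum_i\beta_i^2B_i^2$ and, through $A_iB_i+B_iA_i\equiv(1+A_i+B_i)\overline{[a_i,b_i]}$, the central part $\sum_i\alpha_i\beta_i(C+A_iC+B_iC)$ (here $\overline{[a_i,b_i]}\equiv C$ modulo $I(\ZZ)^2FG$, as $[a_i,b_i]$ generates the cyclic center). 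The mixed part $PW+WP$ adds to $A_iC$, respectively $B_iC$, only through a commutator of an $a$ and a $b$ from the same factor, contributing $\sum_j(\alpha_j\gamma_{i,j}+\beta_j\delta_{i,j})$ to $A_iC$ and $\sum_j(\beta_j\gamma_{j,i}+\alpha_j\epsilon_{i,j})$ to $B_iC$ (with the symmetric conventions $\delta_{i,j}=\delta_{j,i}$, $\epsilon_{i,j}=\epsilon_{j,i}$).

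Applying $\Psi_n\circ\dots\circ\Psi_2$ then sends $C\mapsto\omega:=C^{2^{n-1}}$ (the unique central involution element), $A_iC\mapsto A_i^{2^{n-1}}\omega$, $B_iC\mapsto B_i^{2^{n-1}}\omega$, and $A_i^2\mapsto A_i^{2^n}$, $B_i^2\mapsto B_i^{2^n}$. The decisive weight count is that $C$ has Jennings weight $2$, so the stray products $A_i^{2^{n-1}}B_j^{2^{n-1}}\omega$ coming from $W^2$ reach weight $2^{n+1}$ and die in the target, whereas $A_i^{2^{n-1}}\omega$ only reaches weight $2^n+2^{n-1}$ and survives. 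In the case $G=Q$ all $A_i^{2^n}=B_i^{2^n}=0$; in the case $G=R$ one has $A_1^{2^n}=B_1^{2^n}=\omega$ and the rest vanish. Granting the independence discussed below and using injectivity of the Frobenius, $\Lambda(X)=0$ becomes equivalent to: the coefficient of $A_i^{2^{n-1}}\omega$ vanishes, i.e. $\alpha_i\beta_i=\sum_j(\alpha_j\gamma_{i,j}+\beta_j\delta_{i,j})$, which is $(A_iC)$; symmetrically one obtains $(B_iC)$; and the coefficient of $\omega$ vanishes, which reads $\sum_i\alpha_i\beta_i=0$ when $G=Q$ and $\alpha_1^2+\beta_1^2+\sum_i\alpha_i\beta_i=0$ when $G=R$, the extra squares arising from $A_1^2,B_1^2$ and combining by Frobenius additivity.

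The main obstacle is the linear independence, in the target $I(G)^{2^n}/J$, of the $2r+1$ elements $\omega$, $A_i^{2^{n-1}}\omega$, $B_i^{2^{n-1}}\omega$ ($1\le i\le r$); only then does $\Lambda(X)=0$ force each coefficient to vanish separately. I would establish this using a Jennings basis of $FG$ and the associated graded algebra, identifying these as distinct basis monomials of the graded components in degrees $2^n$ and $2^n+2^{n-1}$. The secondary difficulty is the bookkeeping promised above: confirming, via the weight of $C$ and the description of $\Imagen(\Psi_j)$ in \Cref{lemma:nstep}, that every summand of $\Psi_1(X)$ outside the span of $A_i^2,B_i^2,C,A_iC,B_iC$ — in particular the whole of $W^2$ — is mapped into $J$ and hence contributes nothing.
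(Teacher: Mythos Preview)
Your proposal is correct and follows essentially the same approach as the paper: compute $\Psi_1(X)$ modulo the irrelevant terms (using the analysis from the proof of \Cref{lemma:LambdaWellDefined}), then apply the additive composite $\Psi_n\circ\cdots\circ\Psi_2$ termwise and read off the coefficient conditions via injectivity of Frobenius. The two points you flag as obstacles---the linear independence of $\omega,\,A_i^{2^{n-1}}\omega,\,B_i^{2^{n-1}}\omega$ in the target, and the irrelevance of $W^2$ and the other stray terms---are exactly what the paper handles (the first implicitly, via the Jennings basis theory referenced at the start of the section; the second explicitly, by invoking the proof of \Cref{lemma:LambdaWellDefined}).
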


\begin{proof} Let $X$ as above. To simplify notation later, we set $\delta_{i,i} = \epsilon_{i,i} = 0$ and $\delta_{j,i} = \delta_{i,j}$ and $\epsilon_{j,i} = \epsilon_{i,j}$ for all $i$ and $j$.
	Then, discarding the elements in $\Imagen(\Psi_1)$ which do not influence the result of $\Lambda$, as in the proof of Lemma~\ref{lemma:LambdaWellDefined}, we may write 
	\begin{align*}
	\Psi_1(X) =& \sum_{i=1}^r\alpha_{i}^2A_i^2 +  \sum_{i=1}^r\beta_{i}^2B_i^2 +  \sum_{i=1}^r\alpha_i\beta_i(C + A_iC + B_iC) +  \sum_{i,j=1}^r\gamma_{i,j}(\alpha_jA_i + \beta_i B_j)C \\
	& + \sum_{i=1}^{r-1}\sum_{j>i}^r\delta_{i,j}(\beta_iA_j + \beta_j A_i)C + \sum_{i=1}^{r-1}\sum_{j>i}^r \epsilon_{i,j}(\alpha_iB_j + \alpha_j B_i)C
	\end{align*}
	So using that $\Psi_j$ is a homomorphism  for $j > 1$ by Lemma~\ref{lemma:nstep} and that $A_i^{2^n} = B_i^{2^n} = 0$ for $i>1$ we get 
	\begin{align*}
	\Lambda(X) =&  \alpha_1^{2^n}A_1^{2^n} + \beta_1^{2^n}B_1^{2^n} + C^{2^{n-1}} \left(\sum_{i=1}^r \alpha_i^{2^{n-1}}\beta_i^{2^{n-1}} \right) \\
	+ \sum_{i=1}^r & \left( A_i^{2^{n-1}}C^{2^{n-1}}\left(\alpha_i^{2^{n-1}}\beta_i^{2^{n-1}} + \sum_{j=1}^r \alpha_j^{2^{n-1}} \gamma_{i,j}^{2^{n-1}} + \sum_{j=1}^r \beta_j^{2^{n-1}} \delta_{i,j}^{2^{n-1}} \right) \right. \\
	&\left.  + B_i^{2^{n-1}}C^{2^{n-1}}  \left( \alpha_i^{2^{n-1}}\beta_i^{2^{n-1}} + \sum_{j=1}^r \beta_j^{2^{n-1}} \gamma_{j,i}^{2^{n-1}} + \sum_{j=1}^r \alpha_j^{2^{n-1}} \epsilon_{i,j}^{2^{n-1}} \right) \right)
	\end{align*}

	Using that the Frobenius map is injective the conditions on the coefficients for $\Lambda(X) = 0$ yield the lemma. Indeed, we obtain \eqref{lemma:kerLambda1} looking at the coefficients of $A_i^{2^{n-1}}C^{2^{n-1}}$ and $B_i^{2^{n-1}}C^{2^{n-1}}$ for $ 1\leq i \leq r$, which are the same for $Q$ and $R$, and we  obtain \eqref{lemma:kerLambda2}  and \eqref{lemma:kerLambda3} looking at the coefficient of $C^{2^{n-1}}$.  
\end{proof}
 Given a positive integer  $r$, and a subset $P$ of the polynomial ring $F[x_1,y_1,\dots, x_r,y_r]$, we denote $$V(P)=\left\lbrace  (u_1, v_1,\dots,u_r,v_r)\in F^{2r} : f(u_1,v_1,\dots ,u_r,v_r)=0 \text{ for each }f\in P\right\rbrace ,$$ the set of common zeroes of the polynomials in $P$. Consider the polynomials in $F[x_1,y_1,\dots, x_r,y_r]$ \begin{align*} 
f_r =\sum_{i=1}^r x_i y_i, \quad
g_r =x_1^2+y_1^2+f_r, \qand
h_r =\sum_{i=1}^r x_iy_i(x_i+y_i).
\end{align*}
\begin{proposition}\label{fact:Vs}
	Identify $I(G)/(I(\Omega_{n-1}(G):\ZZ(G))FG+I(G)^2)$ with $F^{2 r }$ with the basis $A_1,B_1,\dots, A_{r}, B_r $. 
	\begin{itemize}
		\item If $G=Q$ then 
		$$\frac{\ker(\Lambda^Q)+I(\Omega_{n-1}(Q :\ZZ(Q)))FQ+I(Q)^2}{I(\Omega_{n-1}(Q :\ZZ(Q)))FQ+I(Q)^2}=V(f_r,h_r). $$
		\item If $G=R$ then 
		$$\frac{\ker(\Lambda^R)+I(\Omega_{n-1}(R :\ZZ(R)))FR+I(R)^2}{I(\Omega_{n-1}(R :\ZZ(R)))FR+I(R)^2}=V(g_r,h_r). $$
	\end{itemize}                                                                                                   
\end{proposition}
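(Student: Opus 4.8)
The plan is to compute the image of $\ker(\Lambda)$ under the canonical projection $\pi\colon I(G)\to I(G)/(I(\Omega_{n-1}(G:\ZZ(G)))FG+I(G)^2)$, which is identified with $F^{2r}$ via the basis $A_1,B_1,\dots,A_r,B_r$, and to compare it with the stated varieties. By \Cref{lemma:kerLambda} each element of $\ker(\Lambda)$ is represented by an $X$ whose linear part is $\sum_{i=1}^r\alpha_iA_i+\sum_{i=1}^r\beta_iB_i$; since all the remaining summands $A_iB_j,A_iA_j,B_iB_j$ lie in $I(G)^2$ and the generators $A_i,B_i$ with $i>r$ lie in $I(\Omega_{n-1}(G:\ZZ(G)))FG$, one has $\pi(X)=(\alpha_1,\beta_1,\dots,\alpha_r,\beta_r)$. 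Consequently a point $(\alpha_1,\beta_1,\dots,\alpha_r,\beta_r)$ lies in the left-hand side exactly when there are scalars $\gamma_{i,j},\delta_{i,j},\epsilon_{i,j}$ (with $\delta,\epsilon$ symmetric and of zero diagonal) solving the system $(A_iC),(B_iC)$ of \Cref{lemma:kerLambda}\eqref{lemma:kerLambda1}, and in addition $f_r(\alpha,\beta)=0$ when $G=Q$ (by \eqref{lemma:kerLambda2}), resp.\ $g_r(\alpha,\beta)=0$ when $G=R$ (by \eqref{lemma:kerLambda3}). Everything therefore reduces to the single claim that the system $(A_iC),(B_iC)$ is solvable in the $\gamma,\delta,\epsilon$ if and only if $h_r(\alpha,\beta)=0$; granting this, the image equals $\{f_r=0\}\cap\{h_r=0\}=V(f_r,h_r)$ for $Q$ and $\{g_r=0\}\cap\{h_r=0\}=V(g_r,h_r)$ for $R$.

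For the necessity of $h_r=0$ I would take the $F$-linear combination $\sum_{i=1}^r\beta_i\,(A_iC)+\sum_{i=1}^r\alpha_i\,(B_iC)$ and inspect its left-hand side. The total coefficient of $\gamma_{i,j}$ is $\beta_i\alpha_j$ (from the $(A_iC)$-part) plus $\alpha_j\beta_i$ (from the $(B_jC)$-part), which cancel in characteristic $2$; the $\delta$- and $\epsilon$-contributions $\sum_{i,j}\beta_i\beta_j\delta_{i,j}$ and $\sum_{i,j}\alpha_i\alpha_j\epsilon_{i,j}$ vanish because $\delta,\epsilon$ are symmetric with zero diagonal. Hence the left-hand side is identically $0$, while the right-hand side is $\sum_{i=1}^r(\alpha_i\beta_i^2+\alpha_i^2\beta_i)=\sum_{i=1}^r\alpha_i\beta_i(\alpha_i+\beta_i)=h_r(\alpha,\beta)$. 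So solvability forces $h_r(\alpha,\beta)=0$.

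For the converse I would produce a solution when $h_r=0$ by setting $\delta=\epsilon=0$. Writing $\mathbf{u}=(\alpha_1,\dots,\alpha_r)^{T}$, $\mathbf{v}=(\beta_1,\dots,\beta_r)^{T}$ and $\mathbf{w}=(\alpha_1\beta_1,\dots,\alpha_r\beta_r)^{T}$, the system reduces to finding a matrix $\gamma\in M_r(F)$ with $\gamma\mathbf{u}=\mathbf{w}$ and $\gamma^{T}\mathbf{v}=\mathbf{w}$. If $\mathbf{u}=0$ or $\mathbf{v}=0$, then $\mathbf{w}=0$ and $h_r=0$ hold automatically and $\gamma=0$ works. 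If $\mathbf{u},\mathbf{v}\neq 0$, the sole obstruction to prescribing $\gamma\mathbf{u}$ and $\mathbf{v}^{T}\gamma$ simultaneously is the scalar identity $\mathbf{v}^{T}(\gamma\mathbf{u})=(\mathbf{v}^{T}\gamma)\mathbf{u}$, which here reads $\mathbf{v}^{T}\mathbf{w}=\mathbf{w}^{T}\mathbf{u}$, i.e.\ $\sum_i\alpha_i\beta_i^2=\sum_i\alpha_i^2\beta_i$, i.e.\ $h_r(\alpha,\beta)=0$; under this hypothesis a suitable $\gamma$ is obtained by a standard rank-at-most-two construction. I expect the main difficulty to be precisely this interlocking of the two families of equations through the rows and columns of $\gamma$: the conceptual point is that $h_r=0$ is at once the unique consistency relation coming from the left kernel of the full system—witnessed by the weights $(\beta_i,\alpha_i)$ used above—and the scalar compatibility condition for the reduced system, with the degenerate cases and the explicit construction of $\gamma$ being the only remaining technicalities.
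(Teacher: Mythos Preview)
Your proposal is correct and essentially parallels the paper's argument. Both reduce to the same claim—that the linear system $(A_iC),(B_iC)$ in the unknowns $\gamma_{i,j},\delta_{i,j},\epsilon_{i,j}$ is solvable if and only if $h_r(\alpha,\beta)=0$—and both prove necessity via the very same linear relation among the rows: your combination $\sum_i\beta_i(A_iC)+\sum_i\alpha_i(B_iC)$ is, after dividing by $\alpha_1$, exactly the relation the paper writes as \eqref{eq:LinesLinComb}.

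The only difference is in the sufficiency direction. The paper keeps the full $2r\times(2r^2-r)$ coefficient matrix $L$, exhibits $2r-1$ independent columns containing $\alpha_1$, and then uses the row relation to conclude $\operatorname{rank}(L)=2r-1$, so that $h_r=0$ is the unique compatibility condition. You instead set $\delta=\epsilon=0$ and reduce to finding $\gamma\in M_r(F)$ with $\gamma\mathbf{u}=\mathbf{w}$ and $\gamma^{T}\mathbf{v}=\mathbf{w}$; the single scalar obstruction $\mathbf{v}^{T}\mathbf{w}=\mathbf{u}^{T}\mathbf{w}$ is precisely $h_r=0$. Your route is a bit more economical and avoids the bookkeeping of the column-by-column rank check, at the price of invoking the (easy but unstated) linear-algebra fact that for nonzero $\mathbf{u},\mathbf{v}$ the map $\gamma\mapsto(\gamma\mathbf{u},\gamma^{T}\mathbf{v})$ has image $\{(\mathbf{a},\mathbf{b}):\mathbf{v}^{T}\mathbf{a}=\mathbf{u}^{T}\mathbf{b}\}$; you should spell this out (the case $\mathbf{v}\in F\mathbf{u}$ needs a word, though it works the same way). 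The paper's rank argument is more explicit but heavier; both approaches are equivalent in substance.
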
                                                                                                
\begin{proof}    Consider the $2r$ equations of \Cref{lemma:kerLambda}~\eqref{lemma:kerLambda1} as a system $\mathcal S$ of linear equations with variables $\gamma_{i,j}$, $\delta_{i,j}$ and $\epsilon_{i,j}$, with $1\leq i<j\leq r$, and parameters $\alpha_i$ and $\beta_i$, with $1\leq i\leq r$.  By \Cref{lemma:kerLambda}, it suffices to prove the following claim.

	\noindent\textbf{Claim}: $\mathcal S$ has a solution if and only if $ (\alpha_1,\beta_1,\dots, \alpha_r,\beta_r)$ is a zero  of $h_r$. 
	
	 It is clear that $(0,0,\dots, 0,0)$ satisfies both conditions. Thus we may assume that one of the entries of  $ (\alpha_1,\beta_1,\dots, \alpha_r,\beta_r)$ is non-zero. By the symmetry of both $h_r$ and $\mathcal S$, we may assume without loss of generality that $\alpha_1\neq 0$.  The system $\mathcal S$ has the form $v=Lw$, where
	   $v$ is a vector of size $2r$ containing $\alpha_i\beta_i$ in the $2i$-th and $(2i-1)$-th entry, i.e.  $v = (\alpha_1\beta_1, \alpha_1\beta_1, \alpha_2\beta_2, \alpha_2\beta_2,...,\alpha_r\beta_r)^T$; $L$ is a $2r \times (2r^2-r)$-matrix containing as entries the $\alpha_i$, $\beta_i$ and $0$, and $w$ a vector of size $2r^2-r$ containing as entries the variables $\gamma_{i,j}$, $\delta_{i,j}$ and $\epsilon_{i,j}$.  We will not write $w$ explicitly, but only $L$. We index the rows of $L$ by $A_iC$ and $B_iC$ in the way that each line corresponds to the respective equation in $\mathcal S$ and index the columns by the $\gamma_{i,j}$, $\delta_{i,j}$ and $\epsilon_{i,j}$. For clarification we write a subindex $S$ at a column and a subindex $T$ at a row, i.e. for example the $\gamma_{1,1}$-column is denoted $(\gamma_{1,1})_S$ and the $A_1C$-row as $(A_1C)_T$.
	
	We state all the possible non-zero elements in $L$ column by column:
	\begin{itemize}
		\item[$(\gamma_{i,j})_S:$] the non-zero entries are $\alpha_j$ in $(A_iC)_T$ and $\beta_i$ in $(B_jC)_T$ 
		\item[$(\delta_{i,j})_S:$] the non-zero entries are $\beta_j$ in $(A_iC)_T$ and $\beta_i$ in $(A_jC)_T$ 
		\item[$(\epsilon_{i,j})_S:$] the non-zero entries are $\alpha_j$ in $(B_iC)_T$ and $\alpha_i$ in $(B_jC)_T$ 
	\end{itemize}

	We claim that $L$ has rank $2r-1$: to see that the rank is at least $2r-1$ we consider all the columns containing the entry $\alpha_1$. These are $(\gamma_{i,1})_S$ for $1\leq i \leq r$ and $(\epsilon_{1,i})_S$ for $2 \leq i \leq r$, so exactly $2r-1$ columns. These are linearly independent: $(\gamma_{i,1})_S$ is the only column containing a non-zero entry in $(A_iC)_T$, namely $\alpha_1$. Excluding the columns $(\gamma_{i,1})_S$ for all $i$, the only column with a non-zero entry in $(B_iC)_T$ is $(\epsilon_{1,i})_S$, the entry again being $\alpha_1$. We next show that the rank is not $2r$, which is the number of rows in $L$. In fact we have
	\begin{align}\label{eq:LinesLinComb}
	(B_1C)_T = (A_1C)_T\frac{\beta_1}{\alpha_1} + (A_2C)_T\frac{\beta_2}{\alpha_1} + ... + (A_rC)_T\frac{\beta_r}{\alpha_1} + (B_2C)_T\frac{\alpha_2}{\alpha_1} + ... + (B_rC)_T\frac{\alpha_r}{\alpha_1}
	\end{align}
	This equation can be observed considering it column-wise: for each type of column we consider the entry in the sum and confirm that it is the same as in $(B_1C)_T$:
	\begin{itemize}
		\item in $(\gamma_{i,1})_S$ the coefficient is $\alpha_1\frac{\beta_i}{\alpha_1} = \beta_i$
		\item in $(\gamma_{i,j})_S$ for $j > 1$ the coefficient is $\alpha_j\frac{\beta_i}{\alpha_1} + \beta_i\frac{\alpha_j}{\alpha_1} = 0$
		\item in $(\delta_{i,j})_S$ the coefficient is $\beta_j\frac{\beta_i}{\alpha_1} + \beta_i\frac{\beta_j}{\alpha_1} = 0$
		\item in $(\epsilon_{i,1})_S$ the coefficient is $\alpha_1\frac{\alpha_j}{\alpha_1} = \alpha_j$
		\item in $(\epsilon_{i,j})_S$ for $j > 1$ the coefficient is $\alpha_j\frac{\alpha_i}{\alpha_1} + \alpha_i\frac{\alpha_j}{\alpha_1} = 0$
	\end{itemize}
	Overall we conclude that indeed \eqref{eq:LinesLinComb} holds.  Hence the system $v = Lw$ has solutions if and only if the same sum operation as in \eqref{eq:LinesLinComb} performed on $v$ gives an equation (as this would be the first step of the Gau\ss-algorithm). This gives 
	\[\alpha_1\beta_1 = \alpha_1\beta_1 \frac{\beta_1}{\alpha_1} +  \alpha_2\beta_2 \frac{\beta_2}{\alpha_1} + ... +  \alpha_r\beta_r \frac{\beta_r}{\alpha_1} +  \alpha_2\beta_2 \frac{\alpha_2}{\alpha_1} + ... +  \alpha_r\beta_r \frac{\alpha_r}{\alpha_1}. \]
	After reordering and multiplying with $\alpha_1$ this is equivalent to $h_r(\alpha_1,\beta_1,\dots,\alpha_r,\beta_r)=0$. This proves the claim, and thus the lemma. 
\end{proof}


\begin{proposition} \label{Prop:SufficientCondition}If $F$ is a finite field of characteristic $2$ and there is no bijective $F$-linear map $\lambda :F^{2r}\to F^{2r}$ such that $\lambda(V(f_r,h_r))=V(g_r,h_r)$, then 
$FR\not \cong FQ$.
\end{proposition}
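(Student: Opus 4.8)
The plan is to argue by contradiction, transporting $\ker(\Lambda)$ through a hypothetical isomorphism and reducing everything to the linear-algebraic statement in the hypothesis. So suppose $\phi\colon FQ\to FR$ is an isomorphism of $F$-algebras. Since $I(G)$ is the Jacobson radical of $FG$, we have $\phi(I(Q))=I(R)$, and as $\phi$ is a ring homomorphism it respects all powers of the radical, $\phi(I(Q)^k)=I(R)^k$. The point to stress is that every ideal appearing in the source and target of $\Lambda$ is canonical: the powers $I(G)^k$ are, the ideal $I(\Omega_{n-1}(G:\ZZ(G)))FG$ is canonical by \cite[Lemma 3.6]{Diego22}, and, since $\ZZ(G)$ is cyclic and $I(\ZZ(G))FG$ is canonical, so are its powers $I(\ZZ(G))^{k}FG=(I(\ZZ(G))FG)^{k}$. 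Hence $\phi$ carries each of these ideals of $FQ$ onto the corresponding ideal of $FR$.

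Because $\Lambda$ is merely the power map $x\mapsto x^{2^n}$ on these canonical quotients, and $\phi(x^{2^n})=\phi(x)^{2^n}$, the isomorphism $\phi$ induces a commutative square
$$\xymatrix{
\mathrm{dom}\,\Lambda^Q \ar[r]^-{\Lambda^Q} \ar[d]_-{\bar\phi} & \mathrm{cod}\,\Lambda^Q \ar[d]^-{\tilde\phi} \\
\mathrm{dom}\,\Lambda^R \ar[r]^-{\Lambda^R} & \mathrm{cod}\,\Lambda^R
}$$
in which $\bar\phi$ and $\tilde\phi$ are the bijections induced by $\phi$ on the (canonical) domain and codomain of $\Lambda$; here the well-definedness proved in \Cref{lemma:LambdaWellDefined} is exactly what lets the power map descend to these quotients. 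In particular $\bar\phi(\ker\Lambda^Q)=\ker\Lambda^R$.

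Next I would pass to the linear quotient of \Cref{fact:Vs}. Let $\pi_G\colon \mathrm{dom}\,\Lambda^G\to I(G)/(I(\Omega_{n-1}(G:\ZZ(G)))FG+I(G)^2)\cong F^{2r}$ be the natural projection that kills the weight-two part. As $\phi(I(Q)^2)=I(R)^2$ and $\phi$ maps the $\Omega$-ideal of $FQ$ onto that of $FR$, the map $\bar\phi$ sends $\ker\pi_Q$ onto $\ker\pi_R$ and therefore descends to a bijective $F$-linear map $\lambda\colon F^{2r}\to F^{2r}$ with $\lambda\circ\pi_Q=\pi_R\circ\bar\phi$. Combining this with the previous square and the identifications $\pi_Q(\ker\Lambda^Q)=V(f_r,h_r)$ and $\pi_R(\ker\Lambda^R)=V(g_r,h_r)$ of \Cref{fact:Vs} gives
\[\lambda\big(V(f_r,h_r)\big)=\lambda\big(\pi_Q(\ker\Lambda^Q)\big)=\pi_R\big(\bar\phi(\ker\Lambda^Q)\big)=\pi_R(\ker\Lambda^R)=V(g_r,h_r).\]
Thus $\lambda$ is a bijective $F$-linear map carrying $V(f_r,h_r)$ onto $V(g_r,h_r)$, contradicting the hypothesis; hence no such $\phi$ exists and $FR\not\cong FQ$.

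The essential content here is the verification that each ideal defining the source and target of $\Lambda$ is invariant under an arbitrary algebra isomorphism, so that $\phi$ genuinely induces $\bar\phi$, $\tilde\phi$ and the commuting square; once this bookkeeping is in place the argument is formal. The genuinely hard work — computing $\ker\Lambda$ and identifying the two varieties — has already been carried out in \Cref{lemma:kerLambda} and \Cref{fact:Vs}, and the finiteness of $F$ enters only insofar as it turns the remaining obstruction into the combinatorial non-existence statement that is taken as the hypothesis.
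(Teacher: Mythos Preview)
Your proof is correct and follows essentially the same approach as the paper's own argument: both assume an isomorphism exists, use canonicity of the ideals defining $\Lambda$'s domain and codomain to obtain a commuting square, deduce that the kernels correspond, and then pass to the linear quotient to contradict the hypothesis via \Cref{fact:Vs}. Your version is somewhat more explicit in justifying why each ideal is canonical and in naming the projection $\pi_G$, but the substance is identical.
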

\begin{proof}
	Suppose that $\phi:FR\to FQ$ is a isomorphism preserving the augmentation. Abusing notation, we denote by $\phi$ every map induced by $\phi$ between quotients of $I(R)$ and $I(Q)$.   Since   the diagram 
	\begin{equation*}
	\xymatrix{
	\frac{I(R)}{ I(R)^{2  + 2^{n-1}  } + Z(I)FR } \ar[rr]^{\Lambda^R} \ar[d]^{\phi }&  & \frac{I(R)^{2^n} }{   I(R)^{2^n+2^{n-1}+1} + Z(I)^{2^{n-1}+1}FR } \ar[d]^-{\phi } \\
	\frac{I(Q)}{ I(Q)^{2  + 2^{n-1}  } + Z(I)FQ } \ar[rr]^{\Lambda^Q}  &  & \frac{I(Q)^{2^n} }{   I(Q)^{2^n+2^{n-1}+1} + Z(I)^{2^{n-1}+1}FG } 
	}
	\end{equation*}
	commutes, we have that  $\phi(\ker(\Lambda^R))=\ker(\Lambda^Q)$. Thus the induced isomorphism $$\phi :\frac{I(R) }{I(\Omega_{n-1}(R):\ZZ(R))FR+I(R)^2}\to  \frac{I(Q) }{I(\Omega_{n-1}(Q):\ZZ(Q))FQ+I(Q)^2} $$  satisfies
	$$\phi \left(\frac{\ker(\Lambda^R)+I(\Omega_{n-1}(R :\ZZ(R)))FR+I(R)^2}{I(\Omega_{n-1}(R :\ZZ(R)))FR+I(R)^2}  \right)= \frac{\ker(\Lambda^Q)+I(\Omega_{n-1}(Q :\ZZ(G)))FQ+I(Q)^2}{I(\Omega_{n-1}(Q :\ZZ(Q)))FQ+I(Q)^2}.$$
	But, with the notation of \Cref{fact:Vs}, $\phi:F^{2^r}\to F^{2r}$ is $F$-linear and maps $V(f_r,h_r)$ to $V(g_r,h_r)$, in  contradiction with the hypothesis. 
\end{proof}

\begin{problem}\label{question}
	Let $F$ be a field of characteristic $2$ containing $\F_4$, the field with $4$elements, and $r\geq 2$. Prove there is no bijective $F$-linear map $\lambda :F^{2r}\to F^{2r}$ such that $\lambda(V(f_r,h_r))=V(g_r,h_r)$.
\end{problem}

If indeed the map as described in the previous problem does not exist, \Cref{Prop:SufficientCondition} yields  a   proof for \Cref{TheoremMIPCyclicCentreEvenGoodFieds} without the restriction on the field. We prove this for $r=1$ over any field and for $r=2$ over finite fields in the following lemma,  which will give the proof of \Cref{TheoremMIPCyclicCentreEvenBadFields} relying on a recent reduction result \cite{GarciLucasDelRio23}. 

\begin{lemma} \label{lem:EstrategiaDeAngelMadrileno}
	\Cref{question} holds for $r = 1$ for any field and for $r=2$, if $F$ is finite.
\end{lemma}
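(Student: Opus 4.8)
The plan is to use that a bijective $F$-linear map $\lambda$ is in particular a bijection of the underlying sets; hence if $\lambda(V(f_r,h_r))=V(g_r,h_r)$ then the two varieties have equal cardinality, and more crudely $\lambda$ cannot carry a set with more than one point onto a single point. So it suffices to exhibit a discrepancy between $V(f_r,h_r)$ and $V(g_r,h_r)$. For $r=1$ this is immediate: since $f_1=x_1y_1$, the equation $f_1=0$ already forces $h_1=x_1y_1(x_1+y_1)=0$, so $V(f_1,h_1)=\{x_1y_1=0\}$ is the union of the two coordinate lines and contains at least the three points $(0,0),(1,0),(0,1)$. On the other hand $h_1=0$ allows only $x_1=0$, $y_1=0$ or $x_1=y_1$, and in each of these cases $g_1=x_1^2+y_1^2+x_1y_1=0$ forces $(x_1,y_1)=(0,0)$ (the nonzero zeros of $g_1$, which exist precisely when $\F_4\subseteq F$, all violate $h_1=0$). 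Thus $V(g_1,h_1)=\{(0,0)\}$ over every field of characteristic $2$, and no bijection can carry the former set onto a single point; this settles $r=1$ for an arbitrary field.

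For $r=2$ with $F$ finite of order $q=2^m$ I would compute the two cardinalities. For $V(f_2,h_2)$, the relation $f_2=0$ means $a:=x_1y_1=x_2y_2$, whereupon $h_2=0$ becomes $a(x_1+y_1+x_2+y_2)=0$. The locus $a=0$ contributes $(2q-1)^2$ points. For fixed $a\neq0$ one counts ordered pairs of equal product $a$ and equal trace: writing $n_a(t)$ for the number of $(x,y)$ with $xy=a$, $x+y=t$ (the roots of $T^2+tT+a$), the contribution is $\sum_t n_a(t)^2=1+4\,\#\{t\neq0:\mathrm{Tr}(a/t^2)=0\}=2q-3$, using that $t\mapsto a/t^2$ is a bijection of $F^\times$ and that the trace-zero subspace meets $F^\times$ in $q/2-1$ points. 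Summing over the $q-1$ values $a\neq 0$ yields
\[ |V(f_2,h_2)|=(2q-1)^2+(q-1)(2q-3)=6q^2-9q+4. \]

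For $V(g_2,h_2)$, set $Q_1=x_1^2+x_1y_1+y_1^2$ and $k=x_1y_1(x_1+y_1)$; then $g_2=0$ reads $x_2y_2=Q_1$. On the locus $Q_1=0$ one checks directly that only $(x_1,y_1)=(0,0)$ survives $h_2=0$, contributing $2q-1$ points. For $Q_1\neq0$ the admissible $(x_2,y_2)$ solve $x_2y_2=Q_1$, $x_2+y_2=k/Q_1$, whose number is $1$ if $k=0$ and $1+(-1)^{\mathrm{Tr}(Q_1^3/k^2)}$ if $k\neq0$. The decisive observation is that $Q_1^3/k^2$ is homogeneous of degree $0$, hence equals $\rho(\lambda)=(\lambda^2+\lambda+1)^3/(\lambda^2(\lambda+1)^2)$ with $\lambda=x_1/y_1$; consequently the resulting exponential sum factors as $(q-1)T$, where $T=\sum_{\lambda}(-1)^{\mathrm{Tr}(\rho(\lambda))}$ runs over at most $q-2$ values of $\lambda$ and is thus a sum of at most $q-2$ signs. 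Together with $|\{Q_1\neq0\}|\le q^2-1$ this gives $|V(g_2,h_2)|\le(2q-1)+(q^2-1)+(q-1)(q-2)=2q^2-q$, whence
\[ |V(f_2,h_2)|-|V(g_2,h_2)|\ge 6q^2-9q+4-(2q^2-q)=4(q-1)^2>0 \]
for every $q\ge2$. So the cardinalities differ and no bijective $\lambda$ can exist, for any finite field of characteristic $2$.

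The main obstacle is the evaluation of $|V(g_2,h_2)|$: the cubic $h_2$ couples the two hyperbolic blocks, so the count does not split as transparently as for $f_2$. What rescues the argument is the degree-$0$ homogeneity of $Q_1^3/k^2$, which collapses the genuinely two-dimensional character sum to a one-variable sum carrying an overall factor $q-1$; this confines the fluctuation to size at most $(q-1)(q-2)$, a bound the trivial ``number of terms'' estimate already supplies, so that the $\Theta(q^2)$ gap between the main terms is never bridged and no Weil-type estimate is needed. A minor bookkeeping point is that the exact size of $\{Q_1=0\}$, and hence the exact value of $|V(g_2,h_2)|$, depends on whether $Q_1$ is anisotropic, i.e.\ on the parity of $m$; but the strict inequality $4(q-1)^2>0$ holds regardless, so the conclusion is uniform in $F$.
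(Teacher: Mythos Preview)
Your proof is correct. Both you and the paper follow the same overall strategy---show that $|V(f_r,h_r)|>|V(g_r,h_r)|$, which precludes any bijection carrying one set onto the other---and for $r=1$ the arguments are essentially identical (your observation that $V(f_1,h_1)$ is the full union of the coordinate axes is in fact cleaner than the paper's choice of witnesses). For $r=2$ the executions diverge: the paper introduces an auxiliary map $\psi:(x,y)\mapsto(x^2y+xy^2,xy)$ and sets $X^r_{\alpha,\beta}$, $Y^r_{\alpha,\beta}$ built to stratify the varieties in a way that could extend to larger $r$, obtaining $|V(f_2,h_2)|=6q^2-9q+4$ and the cruder bound $|V(g_2,h_2)|\le 2q^2+2q-3$. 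You instead argue directly via the trace criterion for solvability of $T^2+tT+a=0$ and, for $V(g_2,h_2)$, exploit the degree-$0$ homogeneity of $Q_1^3/k^2$ to collapse the character sum to one variable, yielding the sharper bound $|V(g_2,h_2)|\le 2q^2-q$. Your route is more elementary and self-contained for $r=2$; the paper's stratification machinery is built with an eye toward attacking \Cref{question} for general $r$.
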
 
\begin{proof} 
	We will show that $|V(f_r,h_r)|>|V(g_r,h_r)|$ for $r\in \{1,2\}$. For $r=1$ this is immediate, as  $\{(0,0), (1,1)\}\subseteq V(f_1,h_1)$ but $V(g_1,h_1)=\{(0,0)\}$. 
	
We will introduce a few concepts which will solve the case $r=2$ over finite fields, but might also be useful for bigger values of $r$. Let $|F| = q$. Set
\[\psi: F^2 \rightarrow F^2, \ \ (x,y) \mapsto (x^2y + xy^2, xy). \]
Then $\psi(x,y) = (0,0)$ if and only if $x = 0$ or $y = 0$ and $(\alpha, 0) \notin \text{Im}(\psi)$ for any $\alpha \in F^*$. Now consider an element $(\alpha, \beta) \in F^2$ with $\beta \neq 0$. Then 
\[\psi(x,y) = (\alpha, \beta) \Leftrightarrow x^2y+xy^2 = \alpha, \ y = \beta/x \Leftrightarrow x\beta + \beta^2/x = \alpha \Leftrightarrow \beta x^2 + \alpha x + \beta^2 = 0\]	
When $\alpha = 0$ this polynomial has a unique root and when $\alpha \neq 0$ it has either $2$ or no  roots  in $F$. Set $A^* = \{(\alpha, \beta) \in \text{Im}(\psi) \ | \ \alpha, \beta \neq 0 \}$. So $\text{Im}(\psi) = (0, F) \cup A^*$ and
\begin{align}\label{eq:Psi-1}
|\psi^{-1}(\alpha, \beta)| = \left\{ \begin{array}{llll} 2q - 1, & (\alpha, \beta) = (0,0) \\ 1, & \alpha = 0, \beta \neq 0 \\ 2, & (\alpha, \beta) \in A^* \\ 0, & \text{else} \end{array} \right. 
\end{align}
In particular, considering that each point of $F^2$ has an image under $\psi$, this gives 
\[|A^*| = \frac{1}{2}(q^2 - (2q-1) - (q-1)) = \frac{q(q-3)}{2}  + 1 .\]

Now for $(\alpha, \beta) \in F^2$ set 
\begin{align*}
 X^r_{\alpha, \beta} &= \left\{ (a_1,b_1,a_2,b_2,...,a_r,b_r) \in F^{2r} \ \mid \ \sum_{i=1}^r a_i^2b_i + a_ib_i^2 = \alpha, \ \sum_{i=1}^r a_ib_i = \beta \right\} \ \ \text{and} \\
 Y^r_{\alpha, \beta} &= \left\{ (a_1,b_1,a_2,b_2,...,a_r,b_r) \in F^{2r} \ \mid \ \sum_{i=1}^r a_i^2b_i + a_ib_i^2 = \alpha, \ a_1^2 + b_1^2 + \sum_{i=1}^r a_ib_i = \beta \right\}.
\end{align*}
In particular, $V(f_r, h_r) = X^r_{0,0}$ and $V(g_r, h_r) = Y^r_{0,0}$. Stratifying we can write  
\begin{align}\label{eq:Stratification1}
  |X^r_{0, 0}| &= \sum_{(\alpha, \beta) \in F^2} | X^{r-1}_{\alpha^2\beta + \alpha\beta^2, \alpha\beta } | \ \ \text{and} \nonumber \\ 
 |Y^r_{0, 0}|  &= \sum_{(\alpha, \beta) \in F^2} | X^{r-1}_{\alpha^2\beta + \alpha\beta^2, \alpha^2 + \beta^2 + \alpha\beta } | = |Y^1_{0,0}||X^{r-1}_{0,0}| + \sum_{(\alpha, \beta) \in F^2 \setminus{(0,0)}} | X^{r-1}_{\alpha^2\beta + \alpha\beta^2, \alpha^2 + \beta^2 + \alpha\beta } |
\end{align} 
and also
\begin{align}\label{eq:Stratification2}
	|X^r_{\alpha, \beta}| &= \sum_{(\gamma,\delta) \in F^2} |X^1_{\gamma,\delta}|\cdot |X^{r-1}_{\alpha+\gamma, \beta+\delta}| \ \ \text{and} \nonumber \\ 
	|Y^r_{\alpha, \beta}| &= \sum_{(\gamma,\delta) \in F^2} |Y^1_{\gamma,\delta}|\cdot  |X^{r-1}_{\alpha+\gamma,\beta+\delta}| 
\end{align} 

We now specify to the case $r=2$. By the fact that $|Y^1_{0,0}| = 1$, \eqref{eq:Psi-1} and \eqref{eq:Stratification1} we obtain
\[|V(g_2, h_2)| = |Y^2_{0,0}| \leq 2q-1 + 2(q^2-1) = 2q^2 + 2q - 3  \]
On the other hand with \eqref{eq:Psi-1} and \eqref{eq:Stratification2} we have
\[ |X^r_{\alpha, \beta}| = (2q-1)|X^{r-1}_{0,0}| + \sum_{\beta \in F\setminus\{0\}} |X^{r-1}_{0, \beta}| + 2 \sum_{(\alpha, \beta) \in A^*} |X^{r-1}_{\alpha, \beta}|\] 
which for $r = 2$ gives 
\[|V(f_2, h_2)| = |X^2_{0,0}| = (2q-1)^2 + (q-1) + 2\cdot 2\cdot |A^*| = 6q^2 - 9q + 4. \]
We conclude that $|V(f_2,h_2)| > |V(g_2, h_2)|$ for any finite field.
\end{proof}

\begin{proof}[Proof of Theorem~\ref{TheoremMIPCyclicCentreEvenBadFields}] By Lemma~\ref{lemma:LastCaseStanding} the only case remaining open are the groups studied in this section. Proposition~\ref{Prop:SufficientCondition} shows that this translates to Problem~\ref{question} and Lemma~\ref{lem:EstrategiaDeAngelMadrileno} solves this problem in the case described in the statement of the theorem for finite fields. Finally, the main theorem of \cite{GarciLucasDelRio23} shows that we can remove the condition on the field.
\end{proof}

We also obtain some limited computational evidence for \Cref{question} for small values of $r$ and $|F|$. 
 With the help of \texttt{GAP} we observe (see \Cref{tabla}) that the cardinalities of $V(f_r,h_r)$ and $V(g_r,h_r)$ seem to be always different:
  \begin{table}[h!]
 	$$\matriz{{|l|l|cc|}
 		|F| &r  & |V(f_r,h_r)| & |V(g_r,h_r)|     \\\hline \hline 
 		&3 & 736
 		 &  352     \\
 		4 & 	4 &  9856
 		 & 6784      \\
 		& 	5 & 143872
 		 &      119296   \\  
 		 & 	6 & 2197504
 		 &      2000896   \\ \hline 
 	16	&3 &  118336
 		 & 87616      \\
 		&4 &  19588096
 		 & 18605056      \\
 		\hline}$$
 	\caption{\label{tabla}  Cardinality of $V(f_r,h_r)$ and $V(g_r,h_r)$ over the field $F$.}
 \end{table} 

\textbf{Thanks:} We would like to thank \'Angel Gonz\'alez Prieto for discussions on how to attack \Cref{question}.

	\bibliographystyle{amsalpha}
\bibliography{MIP}

\end{document}